\newtheorem{theorem}{Theorem}
\newtheorem{definition}[theorem]{Definition}
\newtheorem{proposition}[theorem]{Proposition}
\newtheorem{lemma}[theorem]{Lemma}
\newtheorem{corollary}[theorem]{Corollary}
\theoremstyle{remark}
\newtheorem{remark}[theorem]{Remark}
\newcommand{\ls}{\lesssim}
\newcommand{\la}{\langle}
\newcommand{\ra}{\rangle}
\newcommand{\R}{\mathbb{R}}
\newcommand{\T}{\mathbb{T}}
\newcommand{\Z}{\mathbb{Z}}
\newcommand{\ep}{\epsilon}
\def\norm#1{\left\|#1\right\|}
\def\bra#1{\langle#1\rangle}
\definecolor{light-gray1}{gray}{0.90}
\definecolor{light-gray2}{gray}{0.80}
\definecolor{light-gray3}{gray}{0.60}
\numberwithin{equation}{section}
\numberwithin{theorem}{section}
\numberwithin{table}{section}
\numberwithin{figure}{section}
\title[Finite difference scheme for 2D periodic NLS]{Finite difference scheme for two-dimensional periodic nonlinear Schr\"odinger equations}
\subjclass[2010]{35Q55, 81T27, 65M06}
\keywords{Periodic nonlinear Schr\"odinger equation, uniform Strichartz estimate, continuum limit}
\author[Y. Hong]{Younghun Hong}
\address{Department of Mathematics, Chung-Ang University, Seoul 06974, Korea}
\email{yhhong@cau.ac.kr}
\author[C. Kwak]{Chulkwang Kwak}
\address{Facultad de Matem\'aticas, Pontificia Universidad Cat\'olica de Chile and Institute of Pure and Applied Mathematics, Chonbuk National University}
\email{chkwak@mat.uc.cl}
\author[S. Nakamura]{Shohei Nakamura}
\address{Department of Mathematics and Information Sciences, Tokyo Metropolitan University, 1-1 Minami-Ohsawa, Hachioji, Tokyo, 192-0397, Japan}
\email{nakamura-shouhei@ed.tmu.ac.jp}
\author[C. Yang]{Changhun Yang}
\address{Korea Institute for Advanced Study, Seoul 20455, Korea and Institute of Pure and Applied Mathematics, Chonbuk National University}
\email{maticionych@kias.re.kr}
\begin{document}	

\maketitle

\begin{abstract}
A nonlinear Schr\"odinger equation (NLS) on a periodic box can be discretized as a discrete nonlinear Schr\"odinger equation (DNLS) on a periodic cubic lattice, which is a system of finitely many ordinary differential equations. We show that in two spatial dimensions, solutions to the DNLS converge strongly in $L^2$ to those of the NLS as the grid size $h>0$ approaches zero. As a result, the effectiveness of the finite difference method (FDM) is justified for the two-dimensional periodic NLS.
\end{abstract}

\section{Introduction}
We consider the nonlinear Schr\"odinger equation (NLS)
\begin{equation}\label{NLS}
i\partial_t u+\Delta u-\lambda |u|^{p-1}u=0
\end{equation}
on the periodic box $\mathbb{T}^d=\mathbb{R}^d/2\pi\mathbb{Z}^d$, where $p>1$, $\lambda = \pm1$, and
$$u=u(t,x):\mathbb{R}\times \mathbb{T}^d\to\mathbb{C}.$$
The NLS is a canonical model that describes the propagation of nonlinear waves. When the nonlinearity is either cubic or quintic, or a combination of these two types, the equation \eqref{NLS} arises in various physical contexts including nonlinear optics and low-temperature physics. In particular, if a huge number of boson particles are trapped in a box with the periodic boundary condition and they are cooled to a temperature approaching absolute zero, they form a Bose--Einstein condensate and their mean-field dynamics is determined by the periodic NLS. We refer to \cite{KSS, GSS, Soh, CH} for a rigorous proof for this.

The periodic NLS \eqref{NLS} may be studied numerically by employing the following standard semi-discrete finite difference method (FDM). For a mesh size $h=\frac{\pi}{M}>0$ with a large integer $M>0$, we denote the \textit{dense} periodic lattice by
\begin{equation}\label{T_h^d}
\mathbb{T}_h^d:=h\mathbb{Z}^d/2\pi\mathbb{Z}^d,
\end{equation}
that is, the additive group
$$\Big\{x=h(m_1,...,m_d): m_j=-M, ...,-2, -1, 0, 1, 2, ..., M-1\Big\}$$
of $(2M)^d$ points (see Figure \ref{Fig:0}), and define the discrete Laplacian $\Delta_h$ by
\begin{equation}\label{discrete Laplacian}
(\Delta_h u_h)(x):=\sum_{j=1}^d\frac{u_h(x+he_j)+u_h(x-he_j)-2u_h(x)}{h^2},\quad\forall x\in\mathbb{T}_h^d,
\end{equation}
which acts on complex-valued functions on the periodic lattice. Then, we formulate the discrete nonlinear Schr\"odinger equation (DNLS) as
\begin{equation}\label{DNLS}
i\partial_t u_h + \Delta_h u_h -\lambda\left|u_h\right|^{p-1} u_h=0, 
\end{equation}
where $p>1$, $\lambda = \pm1$ and
$$u_h=u_h(t,x):\mathbb{R}\times \mathbb{T}_h^d\to\mathbb{C}.$$
In this way, the partial differential equation is translated into the system of $(2M)^d$-many first-order ordinary differential equations.

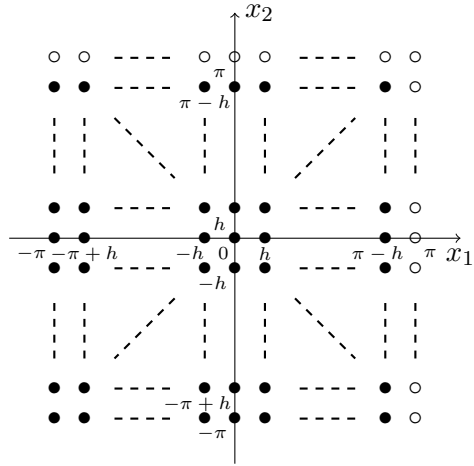
\begin{figure}[b]
\begin{center}
\begin{tikzpicture}[scale=0.5]
\draw[->] (-6,0) -- (6,0) node[below] {$x_1$};
\draw[->] (0,-6) -- (0,6) node[right] {$x_2$};
\node at (-4.8,-4.8){$\bullet$};
\node at (-4,-4.8){$\bullet$};
\node at (-0.8,-4.8){$\bullet$};
\node at (0,-4.8){$\bullet$};
\node at (0.8,-4.8){$\bullet$};
\node at (4,-4.8){$\bullet$};
\node at (4.8,-4.8){$\circ$};
\draw[thick,dashed] (-3.2,-4.8)--(-1.6,-4.8);
\draw[thick,dashed] (3.2,-4.8)--(1.6,-4.8);
\node at (-4.8,-4){$\bullet$};
\node at (-4,-4){$\bullet$};
\node at (-0.8,-4){$\bullet$};
\node at (0,-4){$\bullet$};
\node at (0.8,-4){$\bullet$};
\node at (4,-4){$\bullet$};
\node at (4.8,-4){$\circ$};
\draw[thick,dashed] (-3.2,-4)--(-1.6,-4);
\draw[thick,dashed] (3.2,-4)--(1.6,-4);
\draw[thick,dashed] (-4.8,-3.2)--(-4.8,-1.6);
\draw[thick,dashed] (-4,-3.2)--(-4,-1.6);
\draw[thick,dashed] (-1.6,-1.6)--(-3.2,-3.2);
\draw[thick,dashed] (-0.8,-3.2)--(-0.8,-1.6);
\draw[thick,dashed] (0.8,-3.2)--(0.8,-1.6);
\draw[thick,dashed] (1.6,-1.6)--(3.2,-3.2);
\draw[thick,dashed] (4,-3.2)--(4,-1.6);
\draw[thick,dashed] (4.8,-3.2)--(4.8,-1.6);
\node at (-4.8,-0.8){$\bullet$};
\node at (-4,-0.8){$\bullet$};
\node at (-0.8,-0.8){$\bullet$};
\node at (0,-0.8){$\bullet$};
\node at (0.8,-0.8){$\bullet$};
\node at (4,-0.8){$\bullet$};
\node at (4.8,-0.8){$\circ$};
\draw[thick,dashed] (-3.2,-0.8)--(-1.6,-0.8);
\draw[thick,dashed] (3.2,-0.8)--(1.6,-0.8);
\node at (-4.8,0){$\bullet$};
\node at (-4,0){$\bullet$};
\node at (-0.8,0){$\bullet$};
\node at (0,0){$\bullet$};
\node at (0.8,0){$\bullet$};
\node at (4,0){$\bullet$};
\node at (4.8,0){$\circ$};
\node at (-4.8,0.8){$\bullet$};
\node at (-4,0.8){$\bullet$};
\node at (-0.8,0.8){$\bullet$};
\node at (0,0.8){$\bullet$};
\node at (0.8,0.8){$\bullet$};
\node at (4,0.8){$\bullet$};
\node at (4.8,0.8){$\circ$};
\draw[thick,dashed] (-3.2,0.8)--(-1.6,0.8);
\draw[thick,dashed] (3.2,0.8)--(1.6,0.8);
\draw[thick,dashed] (-4.8,3.2)--(-4.8,1.6);
\draw[thick,dashed] (-4,3.2)--(-4,1.6);
\draw[thick,dashed] (-3.2,3.2)--(-1.6,1.6);
\draw[thick,dashed] (-0.8,3.2)--(-0.8,1.6);
\draw[thick,dashed] (0.8,3.2)--(0.8,1.6);
\draw[thick,dashed] (1.6,1.6)--(3.2,3.2);
\draw[thick,dashed] (4,3.2)--(4,1.6);
\draw[thick,dashed] (4.8,3.2)--(4.8,1.6);
\node at (-4.8,4){$\bullet$};
\node at (-4,4){$\bullet$};
\node at (-0.8,4){$\bullet$};
\node at (0,4){$\bullet$};
\node at (0.8,4){$\bullet$};
\node at (4,4){$\bullet$};
\node at (4.8,4){$\circ$};
\draw[thick,dashed] (-3.2,4)--(-1.6,4);
\draw[thick,dashed] (3.2,4)--(1.6,4);
\node at (-4.8,4.8){$\circ$};
\node at (-4,4.8){$\circ$};
\node at (-0.8,4.8){$\circ$};
\node at (0,4.8){$\circ$};
\node at (0.8,4.8){$\circ$};
\node at (4,4.8){$\circ$};
\node at (4.8,4.8){$\circ$};
\draw[thick,dashed] (-3.2,4.8)--(-1.6,4.8);
\draw[thick,dashed] (3.2,4.8)--(1.6,4.8);
\node at (-5.4,-0.4){{\tiny$-\pi$}};
\node at (-4,-0.4){{\tiny$-\pi+h$}};
\node at (-1.2,-0.4){{\tiny$-h$}};
\node at (0.8,-0.4){{\tiny$h$}};
\node at (3.8,-0.4){{\tiny$\pi-h$}};
\node at (5.2,-0.4){{\tiny$\pi$}};
\node at (-0.3,-0.4){{\tiny$0$}};
\node at (-0.6,-5.2){{\tiny$-\pi$}};
\node at (-1,-4.4){{\tiny$-\pi+h$}};
\node at (-0.6,-1.2){{\tiny$-h$}};
\node at (-0.4,0.4){{\tiny$h$}};
\node at (-0.8,3.6){{\tiny$\pi-h$}};
\node at (-0.4,4.4){{\tiny$\pi$}};
\end{tikzpicture}
\end{center}
\caption{Representation of the two-dimensional periodic lattice $\T_h^2$. All points marked by $\bullet$ are contained in $\T_h^2$, whereas $(4M+1)$-points marked by $\circ$ are excluded. }\label{Fig:0}
\end{figure}

The main purpose of the work presented in this article is to justify the effectiveness of the above numerical scheme. We introduce the following operators to formulate the problem precisely.

\begin{definition}[Discretization and linear interpolation]\label{def:Discrete} (i) For a function $f:\mathbb{T}^d\to\mathbb{C}$, its discretization is defined by 
\begin{equation}\label{discretization}
(d_hf)(x):=\frac{1}{h^d}\int_{x+[0,h)^d} f(y) dy,\quad \forall x\in \mathbb{T}_h^d.
\end{equation}
(ii) Given a function $f_h: \mathbb{T}_h^d\to\mathbb{C}$, its linear interpolation is defined by
\begin{equation}\label{p_h}
(p_hf_h)(x):=f_h(\underline{x})+\sum_{j=1}^dD_{h,j}^+(\underline{x}) (x_j-\underline{x}_j)
\end{equation}
for $x\in \underline{x}+[0,h)^d$ with $\underline{x}\in\mathbb{T}_h^d$, where $e_j$ is the $j$-th standard unit vector, $x_j$ denotes the $j$-th component of $x\in\mathbb{T}^d$, and
$D_{h,j}^+$ is the discrete right-hand side derivative on $\T_h^d$, i.e.,
\begin{equation}\label{discrete derivative}
D_{h,j}^+f_h(\underline{x}):=\frac{f_h(\underline{x}+he_j)-f_h(\underline{x})}{h}.
\end{equation}
\end{definition}

\begin{definition}[Nonlinear propagators]
We denote the nonlinear propagator for NLS \eqref{NLS} by $U(t)$. In other words, $U(t)u_0$ is the solution to the NLS \eqref{NLS} with initial data $u_0$. Similarly, we denote the nonlinear propagator for DNLS \eqref{DNLS} by $U_h(t)$.
\end{definition}

\begin{remark}
$(i)$ The discretization operator $d_h$ sends functions on the periodic box to functions on a periodic lattice. Conversely, the linear interpolation operator $p_h$ maps discrete functions to continuous functions.\\
$(ii)$ The nonlinear propagators $U(t)$ and $U_h(t)$ are well defined  because the equations are locally well posed under suitable assumptions (see Propositions \ref{WP:NLS} and \ref{GWP}).
\end{remark}

Our goal is then to show that
\begin{equation}\label{continuum limit}
\left(p_h\circ U_h(t)\circ d_h\right)u_0-U(t)u_0\to 0
\end{equation}
as $h\to 0$ in a proper sense (see Figure \ref{Fig:1}). The convergence \eqref{continuum limit} is referred to as the \textit{continuum limit} for DNLS. Obviously, proving the continuum limit implies the effectiveness of the numerical scheme.

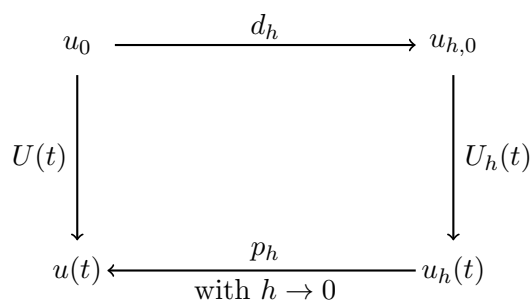
\begin{figure}[b]
\begin{center}
\begin{tikzpicture}[scale=0.5]
\draw[thick,->] (-4,3) -- (4,3);
\node at (0,3.5){$d_h$};
\draw[thick,->] (5,2.2) -- (5,-2.2);
\node at (6.2,0){$U_h(t)$};
\draw[thick,->] (4,-3) -- (-4.2,-3);
\node at (0,-2.5){$p_h$};
\node at (0,-3.5){with $h \to 0$};
\draw[thick,->] (-5,2.2) -- (-5,-2.2);
\node at (-6,0){$U(t)$};
\node at (-5,3){$u_0$};
\node at (5,3){$u_{h,0}$};
\node at (5,-3){$u_h(t)$};
\node at (-5,-3){$u(t)$};
\end{tikzpicture}
\end{center}
\caption{Schematic representation of the \emph{continuum limit} for DNLS. The nonlinear propagator $U(t)$ maps initial data $u_0$ to the solution $u(t)$ to NLS \eqref{NLS}. 
On the other hand, discretizing initial data $u_0$ to $u_{h,0}$ and then acting the nonlinear propagator $U_h(t)$ on $u_{h,0}$ enables generating a solution $u_h(t)$ to DNLS \eqref{DNLS}. Theorem \ref{main theorem} asserts that its linear interpolation $(p_hu_h)(t)$ converges to $u(t)$ as $h\to 0$.}\label{Fig:1}
\end{figure}
Despite its physical importance, to the best of the authors' knowledge, the continuum limit for a nonlinear dispersive equation on a compact manifold has not previously been studied. However, the continuum limit from DNLS on $h\mathbb{Z}^d$ to NLS on $\mathbb{R}^d$ has been investigated by Ignat--Zuazua \cite{IZ-05-1, IZ-05-2,IZ-09,IZ-12}, Kirkpatrick--Lenzmann--Staffilani \cite{KLS-13}, and the first and fourth authors of this work \cite{HY1,HY2}. An important remark is that the linear discrete model
\begin{equation}\label{discrete LS0}
i\partial_t u_h+\Delta_h u_h=0,
\end{equation}
where $u_h=u_h(t,x):\mathbb{R}\times h\mathbb{Z}^d\to\mathbb{C}$,
enjoys \textit{weaker dispersion} than the continuous model \cite{IZ-05-1, SK-05}, and this causes difficulties in proving the continuum limit. In \cite{IZ-05-2, IZ-12}, the authors circumvented the weak dispersion phenomena by introducing a new numerical scheme, that is, the \textit{two-grid algorithm}, to exclude bad frequencies generating weak dispersions. Subsequently, in \cite{HY1,HY2}, the authors discovered that the space--time norm bounds, namely Strichartz estimates, for \eqref{discrete LS0} hold uniformly in $h\in(0,1]$ with some derivative on the right-hand side. As an application, convergence of the discrete NLS on $h\mathbb{Z}^d$ is established without modifying the numerical scheme. 

Returning to the problem discussed here, one would attempt to adopt the approach in \cite{HY1,HY2} to the periodic setting. However, several new issues are raised, in particular, for the desired uniform-in-$h$ Strichartz estimates.

\begin{remark}\label{why difficult}
In the celebrated work of Bourgain \cite{B-93}, Strichartz estimates are established for the linear Schr\"odinger equation $i\partial_t u+\Delta u=0$ on a periodic box, and they are applied to prove the local well-posedness of the periodic NLS \eqref{NLS} in a low regularity Sobolev space. Importantly, these Strichartz estimates can be captured from the gain of regularity in the multi-interaction of linear solutions localized in same frequencies but different modulations. This phenomenon is known as the \emph{dispersive smoothing effect}, and its proof requires an understanding of the geometry of the support of the spacetime Fourier transform of linear solutions, that is, the hypersurface $\{(\tau, k)\in\mathbb{R}\times\mathbb{Z}^d : \tau + |k|^2 = 0\}$. We also refer to the work of Guo, Oh and Wang \cite{GuoOhWang} for a further context of NLS on irrational torus.

Unfortunately, we are currently unable to capture dispersive smoothing in the discrete setting. Indeed, the hypersurface for the  linear equation \eqref{discrete LS0} is given by $\{(\tau, k)\in\mathbb{R}\times\mathbb{Z}^d : \tau + \sum_{j=1}^d\frac{2}{h^2}(1-\cos hk_j) = 0\}$. Following Bourgain's approach, it is necessary to count the maximal number of points in the intersection of twisted annuli $\{\tilde{k}\in\mathbb{Z}^d: M \le |\tilde{\tau} +\sum_{j=1}^d \frac{2}{h^2}(1-\cos h\tilde{k}_j) | \le 2M\}$ and $\{ \tilde{k}'\in\mathbb{Z}^d: N  \le |\tilde{\tau}' + \sum_{j=1}^d\frac{2}{h^2}(1-\cos h\tilde{k}'_j) | \le 2N\}$ restricted to the hyperplane $\tilde{k} + \tilde{k}' =k$ with $\tilde{\tau}+\tilde{\tau}'=\tau$. Compared to the continuous case, the situation is much more complicated because of the complexity of the geometry. Moreover, because local smoothing is known to fail on the noncompact lattice $h\mathbb{Z}^d$ \cite{IZ-05-1}, this may not simply be a matter of technicality but may indicate that a new idea is needed. It is also worth to mention that Strichartz estimates on $\mathbb{T}^d$ for higher dimension were established by Bourgain and Demeter \cite{BD} as a corollary of their main theorem on the decoupling inequality (Wolff's inequality). It may be one of possible ways to follow the decoupling approach to our problem. We leave this question for future study.
\end{remark}

One way to circumvent the aforementioned difficulties would be to approximate the linear propagator on a periodic box by that on an entire space. Ultimately it would seem that, by suitably adjusting the argument of Vega \cite{V-92} to the discrete setting, the time-localized uniform-in-$h$ Strichartz estimates can be obtained on a periodic lattice. For the statement, we define the finite-dimensional vector space $L_h^r=L_h^r(\mathbb{T}_h^d)$ equipped with the norm
\begin{equation}\label{L_h^r}
\|f\|_{L_h^r}
:=\left\{\begin{aligned}
&\bigg\{h^d\sum_{x\in \mathbb{T}_h^d}|f(x)|^r\bigg\}^{1/r}&&\textup{if }1\leq r<\infty,\\
&\sup_{x\in \mathbb{T}_h^d}|f(x)|&&\textup{if }r=\infty,
\end{aligned}\right.
\end{equation}
and define the fractional derivative $\langle\nabla_h\rangle^s$ as the Fourier multiplier of symbol $\langle k\rangle^s$ via the discrete Fourier transform, where $\langle k\rangle=\sqrt{1+|k|^2}$ (see Section 2). We say that $(q,r)$ is \textit{lattice-admissible} if $2\leq q,r\leq\infty$,
\begin{equation}\label{r-admissible}
\frac{3}{q}+\frac{d}{r}=\frac{d}{2}\textup{ and }(q,r,d)\neq (2,\infty,3).
\end{equation}

\begin{theorem}[Strichartz estimates on a periodic lattice]\label{Strichartz} 
Let $h\in(0,1]$. For a lattice-admissible pair $(q,r)$, there exists $C>0$, independent of $h$, such that
\begin{equation}\label{Strichartz estimate 1}
\left\| e^{it\Delta_h}u_{h,0}\right\|_{L_t^q([0,1]; L_h^r)}\leq C \|\langle\nabla_h\rangle^{\frac{2}{q}+\epsilon}u_{h,0} \|_{L_h^2}
\end{equation}
for any $\ep >0$.
\end{theorem}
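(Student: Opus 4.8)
The plan is to pass to frequency-localized pieces and then transfer the \emph{whole-lattice} Strichartz estimates to the periodic lattice $\Td$ by a comparison argument in the spirit of Vega \cite{V-92}. Decompose $u_{h,0}=\sum_{1\le N\lesssim h^{-1}}P_N u_{h,0}$ into Littlewood--Paley blocks $P_N$ localized to frequencies $|k|\sim N$ ($N$ dyadic), defined via the discrete Fourier transform on $\Td$. By the discrete Littlewood--Paley square function estimate on $\Td$ (uniform in $h$ for $1<r<\infty$, supplemented by a Cauchy--Schwarz over the $O(\log\tfrac1h)$ blocks when $r=\infty$) together with Minkowski's inequality (recall $q,r\ge 2$), it suffices to prove, uniformly in $h\in(0,1]$ and in dyadic $N$,
\begin{equation}\label{pp:freqloc}
\bigl\|e^{it\Delta_h}P_N u_{h,0}\bigr\|_{L_t^q([0,1];L_h^r)}\lesssim N^{\frac{2}{q}+\epsilon}\|P_N u_{h,0}\|_{L_h^2};
\end{equation}
summing \eqref{pp:freqloc} in $N$ then yields \eqref{Strichartz estimate 1}, the finitely many blocks with $N\lesssim 1$ being trivial by Bernstein's inequality on $\Td$.

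The key ingredient for \eqref{pp:freqloc} is the corresponding uniform-in-$h$ estimate on the full lattice $h\Z^d$ from \cite{HY1,HY2}: for lattice-admissible $(q,r)$,
\begin{equation}\label{pp:HY}
\bigl\|e^{it\Delta_h}g\bigr\|_{L_t^q([0,1];L_h^r(h\Z^d))}\lesssim\bigl\|\langle\nabla_h\rangle^{\frac{1}{q}+\epsilon}g\bigr\|_{L_h^2(h\Z^d)},
\end{equation}
with constant independent of $h$. This is the genuinely hard input: it rests on the uniform dispersive bound $\|e^{it\Delta_h}g\|_{L_h^\infty(h\Z^d)}\lesssim|t|^{-d/3}\|g\|_{L_h^1(h\Z^d)}$, obtained by a van der Corput analysis of the symbol $\tfrac{2}{h^2}\sum_j(1-\cos h\xi_j)$ that is uniform in $h$ despite the Hessian of this symbol degenerating at the boundary of the Brillouin zone (where $h\xi_j=\tfrac\pi2$); it is precisely this $|t|^{-d/3}$ --- rather than the continuum $|t|^{-d/2}$ --- decay that forces the coefficient $3$ in the lattice-admissibility condition \eqref{r-admissible}, and that makes a direct attack on Theorem~\ref{Strichartz} via Bourgain's dispersive smoothing or decoupling \cite{B-93,BD} run into the complicated geometry of the discrete hypersurface flagged in Remark~\ref{why difficult}. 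One quotes \eqref{pp:HY} from \cite{HY1,HY2}; otherwise it follows from the dispersive bound by the standard $TT^\ast$ / Hardy--Littlewood--Sobolev argument, the $\tfrac1q$-loss being the scaling gap of the lattice-admissible pairs and the $\epsilon$ absorbing a logarithm at the highest frequencies.

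To transfer \eqref{pp:HY} to $\Td$, subdivide in time so that on each piece a frequency-$N$ wave packet moves by at most one period. Partition $[0,1]$ into $\sim N$ intervals $I_j=[t_j,t_j+N^{-1}]$, and fix a smooth $2\pi\Z^d$-periodic partition of unity $\sum_{n\in\Z^d}\chi(\,\cdot-2\pi n)\equiv 1$ on $h\Z^d$ with $\chi$ supported in $O(1)$ fundamental cells and slowly varying on scale $1\gtrsim N^{-1}$. Write $e^{it\Delta_h}P_N u_{h,0}=e^{i(t-t_j)\Delta_h}\phi_j$ with $\phi_j:=e^{it_j\Delta_h}P_N u_{h,0}$ (still frequency-localized to $|k|\lesssim N$), and set $\tilde\phi_j:=\chi\,\phi_j^{\sharp}$, where $\phi_j^{\sharp}$ is the $2\pi\Z^d$-periodic lift of $\phi_j$ to $h\Z^d$; then $\tilde\phi_j$ is still frequency-localized to $|k|\lesssim N$ with $\|\tilde\phi_j\|_{L_h^2(h\Z^d)}\lesssim\|P_N u_{h,0}\|_{L_h^2}$, and the periodization identity $e^{is\Delta_h}\phi_j=\sum_{n\in\Z^d}\bigl(e^{is\Delta_h}\tilde\phi_j\bigr)(\,\cdot+2\pi n)$ holds on $\Td$. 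Since the group velocity of $e^{is\Delta_h}$ on $|\xi|\lesssim N$ is $O(N)$, for $|s|\le N^{-1}$ the function $e^{is\Delta_h}\tilde\phi_j$ is concentrated in $O(1)$ fundamental cells up to a rapidly decaying tail, so for each such $s$ one has $\|e^{is\Delta_h}\phi_j\|_{L_h^r(\Td)}\lesssim\|e^{is\Delta_h}\tilde\phi_j\|_{L_h^r(h\Z^d)}$ (the tail costing at most a factor $N^{\epsilon}$). Taking $L_t^q(I_j)$ and applying \eqref{pp:HY} on $[0,N^{-1}]\subset[0,1]$ gives
\begin{equation}\label{pp:onepiece}
\bigl\|e^{it\Delta_h}P_N u_{h,0}\bigr\|_{L_t^q(I_j;L_h^r(\Td))}\lesssim\bigl\|\langle\nabla_h\rangle^{\frac{1}{q}+\epsilon}\tilde\phi_j\bigr\|_{L_h^2}\lesssim N^{\frac{1}{q}+\epsilon}\|P_N u_{h,0}\|_{L_h^2},
\end{equation}
and summing the $q$-th powers of \eqref{pp:onepiece} over the $\sim N$ intervals produces the extra factor $N^{1/q}$ and hence \eqref{pp:freqloc}.

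The main obstacle is therefore \eqref{pp:HY}: the uniform-in-$h$ whole-lattice Strichartz estimate, equivalently the uniform degenerate dispersive bound, which is the substance of \cite{HY1,HY2}. The transfer step is elementary but must be done carefully, in particular controlling (i) the mild frequency spreading introduced by the cutoff $\chi$, (ii) the rapidly decaying wave-packet tails, which have to be absorbed into the $\epsilon$-loss, and (iii) uniformity of all constants in $h$; the subdivision into $\sim N$ time-pieces is exactly the mechanism that converts the whole-lattice loss $\langle\nabla_h\rangle^{1/q}$ into the periodic-lattice loss $\langle\nabla_h\rangle^{2/q+\epsilon}$, mirroring the familiar gap between Strichartz estimates on $\R^d$ and on $\T^d$.
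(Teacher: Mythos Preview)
Your overall architecture matches the paper's: Littlewood--Paley decompose, establish a frequency-localized Strichartz bound of the form \eqref{pp:freqloc}, and obtain this by subdividing $[0,1]$ into $\sim N$ short intervals of length $\sim N^{-1}$ (frequency $N$) on each of which one has a good estimate; the extra $N^{1/q}$ from summing the pieces is exactly what converts a ``whole-lattice'' $1/q$ loss into the periodic $2/q$ loss. Where you diverge from the paper is in how the short-time estimate is produced. You do a Burq--G\'erard--Tzvetkov-style \emph{spatial} transfer: cut off, lift to $h\Z^d$, invoke approximate finite propagation to control the periodization, and then quote the uniform Strichartz estimate on $h\Z^d$ from \cite{HY1,HY2}. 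The paper instead works \emph{directly on the Fourier side}: it proves a frequency-localized dispersive estimate on $\Td$ itself, valid for $|t|\lesssim N^{-1}$, by approximating the exponential sum over $(\Td)^*$ by the corresponding oscillatory integral via Zygmund's lemma (if $\varphi'$ is monotone and $|\varphi'|<2\pi$ on an interval, then the Riemann sum and the integral of $e^{i\varphi}$ differ by $O(1)$), and then applying van der Corput to the integral. The time restriction $|t|\lesssim N^{-1}$ is precisely what guarantees $|\varphi'|<2\pi$, so the same subdivision that you use for propagation reasons is used here for a purely Fourier-analytic reason.

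Both routes lead to the same numerology. The paper's route is shorter and entirely self-contained: no black-box citation of \cite{HY1,HY2}, no spatial cutoff, no propagation-of-wavepackets heuristics, and no need for a square-function estimate (the paper just uses the triangle inequality over the $O(\log h^{-1})$ blocks and absorbs the logarithm into the $\epsilon$). Your route is viable, but the steps you flag as ``elementary but must be done carefully'' are real work: the rapid-decay tail bound for $e^{is\Delta_h}\tilde\phi_j$ outside $O(1)$ fundamental cells requires a nonstationary-phase/integration-by-parts argument on the discrete kernel, uniform in $h$, and you must verify that multiplication by $\chi$ preserves frequency localization to $|k|\lesssim N$ up to rapidly decaying errors (again uniformly in $h$). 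None of this is needed in the paper's argument, where Zygmund's lemma replaces the entire transfer mechanism in one stroke.
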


Strichartz estimates are one of the fundamental tools to study dispersive equations because they quantify the smoothing and/or decay properties of solutions. On the unbounded lattice $\mathbb{Z}^d$, the smoothing and decay properties have been investigated for various models: 
we refer to \cite{SK-05, IZ-05-1, Ignat} for the Schr\"odinger equation, \cite{Schultz} for the wave equation, and \cite{BG} for the Klein--Gordon equation. Theorem \ref{Strichartz} is the first result on a compact discrete domain as far as the authors know. It should be noted that the inequality \eqref{Strichartz estimate 1} holds uniformly in $h\in(0,1]$. 
Indeed, it is easy to show the inequality $\| e^{it\Delta_h}u_{h,0}\|_{L_t^q([0,1]; L_h^r)}\leq C_h\|u_{h,0}\|_{L_h^2}$ for all $1 \le q,r \le \infty$, since $\mathbb{T}_h^d$ is finite-dimensional. However, this inequality is not useful at all for our purpose because the constant $C_h$ blows up as $h\to 0$. 
As in \cite{HY1}, for which uniform Strichartz estimates are proven on $h\mathbb{Z}^d$, we could obtain an appropriate (uniform-in-$h$) Strichartz estimates by placing some derivatives on the norm on the right-hand side (Theorem \ref{Strichartz}). We also note that we do not claim optimality of the Strichartz estimates \eqref{Strichartz estimate 1}. In fact, the order of the derivative could be reduced by solving the counting problem mentioned in Remark \ref{why difficult}.


Although there is still room for improvement, Theorem \ref{Strichartz} is sufficient to establish the global-in-time continuum limit for the two-dimensional periodic NLS, which is the main result of this work.

\begin{theorem}[Continuum limits]\label{main theorem}
Let $d=2$. We assume
	\begin{equation}\label{assumption 1}
	\left\{
	\begin{aligned}
	1&<p<\infty&& \textup{when }\lambda =1&& \textup{(defocusing)},\\
	1&<p<3&&\textup{when }\lambda = -1&& \textup{(focusing)}.
	\end{aligned}
	\right.
	\end{equation}
There exist constants $A,B>0$, independent of $h\in(0,1]$, such that for all $t\in\mathbb{R}$,
$$\left\|\left(p_h\circ U_h(t)\circ d_h\right)u_0-U(t)u_0\right\|_{L^2(\mathbb{T}^2)}\leq A \sqrt{h}e^{B|t|}\left(1+\|u_{0}\|_{H^1 (\mathbb{T}^2)}\right)^p.$$
\end{theorem}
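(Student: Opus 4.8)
The plan is to compare the discrete and continuous flows via a Duhamel/bootstrap argument, using the uniform Strichartz estimates of Theorem~\ref{Strichartz} as the engine.  First I would decompose the error
$$
(p_h\circ U_h(t)\circ d_h)u_0 - U(t)u_0 = \underbrace{(p_h\circ U_h(t)\circ d_h)u_0 - (p_h\circ d_h)\big(U(t)u_0\big)}_{\text{dynamical error}} + \underbrace{(p_h\circ d_h - \mathrm{Id})U(t)u_0}_{\text{approximation error}}.
$$
The approximation error is the easy piece: for $f\in H^1(\mathbb{T}^2)$ one has $\|(p_h\circ d_h - \mathrm{Id})f\|_{L^2(\mathbb{T}^2)}\lesssim h\|f\|_{H^1(\mathbb{T}^2)}$ by a standard finite-element-type estimate, and since NLS conserves mass and propagates $H^1$-regularity with at most exponential growth (Proposition~\ref{WP:NLS} applied globally, using the Gagliardo--Nirenberg/energy structure under \eqref{assumption 1}), this contributes $O(\sqrt h\,e^{B|t|}(1+\|u_0\|_{H^1})^p)$ — in fact with a better power of $h$, so it is harmless.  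For the dynamical error I would push everything onto the lattice: writing $v_h(t):= U_h(t)d_hu_0$ and $w_h(t):= d_h\big(U(t)u_0\big)$, and noting $\|p_h g_h\|_{L^2(\mathbb{T}^2)}\sim \|g_h\|_{L^2_h}$ plus again an $O(h)$-error, it suffices to bound $\|v_h(t)-w_h(t)\|_{L^2_h}$.

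Next I would derive the equation satisfied by $w_h = d_h U(\cdot)u_0$.  Since $d_h$ does not commute exactly with $\Delta$ — rather $d_h\Delta = \Delta_h d_h + (\text{consistency error})$ — and $d_h$ does not commute with the nonlinearity, $w_h$ solves DNLS up to a forcing term $\mathcal{E}_h(t)$ with $\|\mathcal{E}_h(t)\|_{L^2_h}$, or better an appropriate dual Strichartz norm, bounded by $h$ times a polynomial in $\|U(t)u_0\|_{H^2}$ or $\|U(t)u_0\|_{W^{1,r}}$ for suitable $r$.  Here I would invoke the standard periodic-NLS theory: under \eqref{assumption 1} with $d=2$, the solution $U(t)u_0$ is global in $H^1$ (defocusing for all $p$; focusing for $p<3$, the $L^2$-subcritical/mass-subcritical-to-critical range, where global existence in $H^1$ still holds), with $\|U(t)u_0\|_{H^1}\le C e^{B|t|}(1+\|u_0\|_{H^1})$ — and I would need enough regularity to make sense of the consistency error, so I would either run the argument assuming $H^2$ data first and then approximate, or — cleaner — measure the consistency error $\|(d_h\Delta - \Delta_h d_h)f\|_{L^2_h}\lesssim h\|f\|_{H^2}$ against only one derivative by exploiting that $d_h\Delta - \Delta_h d_h$ is, on each frequency $k$ with $|k|\le M$, a multiplier of size $O(h^2|k|^4)=O(h|k|^3)$, and pairing the loss against the $\epsilon$-derivative room in Theorem~\ref{Strichartz}.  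Then subtracting the two DNLS (one for $v_h$, one for $w_h$ with forcing $\mathcal{E}_h$) gives, via the Duhamel formula,
$$
v_h(t)-w_h(t) = -i\lambda\int_0^t e^{i(t-s)\Delta_h}\Big(|v_h|^{p-1}v_h - |w_h|^{p-1}w_h\Big)(s)\,ds + i\int_0^t e^{i(t-s)\Delta_h}\mathcal{E}_h(s)\,ds.
$$

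The core estimate is then a Strichartz-based Gronwall argument on unit time intervals $[n,n+1]$.  On such an interval I would apply Theorem~\ref{Strichartz} together with its inhomogeneous (Christ--Kiselev) counterpart to control $\|v_h-w_h\|$ in a suitable Strichartz space $L^q_t W^{s,r}_h$, using the fractional Leibniz / difference estimate
$$
\big\||v_h|^{p-1}v_h - |w_h|^{p-1}w_h\big\| \lesssim \big(\|v_h\|^{p-1} + \|w_h\|^{p-1}\big)\,\|v_h - w_h\|
$$
in the dual norm, where the $H^1_h$-bounds on both $v_h$ (from the conservation laws for DNLS, Proposition~\ref{GWP}, uniform in $h$) and $w_h$ (from the $H^1$-bound on $U(t)u_0$) supply the needed a priori control of the $(p-1)$-factors.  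This yields $\|v_h-w_h\|_{L^\infty_t([n,n+1];L^2_h)} \le C\big(\|v_h(n)-w_h(n)\|_{L^2_h} + h\cdot(\text{poly in }H^1\text{-norm at time }n)\big)$ with $C$ independent of $h$ and $n$; iterating over $\sim|t|$ intervals produces the factor $e^{B|t|}$ and the overall $\sqrt h$.  The main obstacle, and the place where $d=2$ and the choice of right-hand-side derivative in Theorem~\ref{Strichartz} really matter, is the consistency/commutator error: I must absorb the $h^2\Delta^2$-type loss in $d_h\Delta - \Delta_h d_h$ (and an analogous loss in $p_h\circ d_h - \mathrm{Id}$ at the $H^1$ level, and in the mismatch between $|d_h u|^{p-1}d_h u$ and $d_h(|u|^{p-1}u)$) using only the $H^1$-regularity that NLS globally propagates, which forces me to spend the $\epsilon$-derivatives of \eqref{Strichartz estimate 1} carefully and to interpolate — losing $h^{1}$ down to $h^{1/2}$, which is exactly the rate appearing in the statement.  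Everything else (the approximation lemmas for $d_h,p_h$, the global $H^1$ theory, the nonlinear difference estimate) is routine once this bookkeeping is set up.
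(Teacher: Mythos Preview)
Your decomposition is genuinely different from the paper's, and the difference is exactly where your proposal runs into trouble.  You push the comparison onto the lattice, setting $w_h=d_h(U(t)u_0)$ and deriving a forced DNLS for $w_h$; the forcing then contains the commutator $(\Delta_h d_h-d_h\Delta)u$.  You write $\|(d_h\Delta-\Delta_h d_h)f\|_{L^2_h}\lesssim h\|f\|_{H^2}$, but this is not right: the symbol mismatch is $O(h^2|k|^4)$, which gives $h^2\|f\|_{H^4}$, or at best $h\|f\|_{H^3}$ after spending the factor $h|k|\lesssim\pi$ you mention.  Neither $H^3$ nor $H^4$ is available---the NLS solution is only in $H^1$, and the ``$\epsilon$-derivative room'' in Theorem~\ref{Strichartz} is a \emph{loss} on the right-hand side, not a gain you can trade against.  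Your density workaround (smooth data, then approximate) could in principle recover a rate, but balancing the $H^3$-dependent constant against the $L^2$-approximation error is essentially the interpolation that produces $\sqrt{h}$, and you would need uniform-in-$h$ continuous dependence for DNLS to close it; none of this is carried out.

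The paper sidesteps the commutator entirely.  It stays on $\mathbb{T}^2$, comparing $p_hu_h$ with $u$ directly, and splits the Duhamel difference into four pieces $I_1,\dots,I_4$.  The crucial ingredient you are missing is Lemma~\ref{lem:pre3}: the \emph{propagator} comparison
\[
\|p_h e^{it\Delta_h}g_h - e^{it\Delta}f\|_{L^2(\mathbb{T}^2)}\lesssim \sqrt{h}\,|t|\big(\|g_h\|_{H^1_h}+\|f\|_{H^1}\big)+\|p_hg_h-f\|_{L^2},
\]
proved by a direct Fourier-side computation (combining the symbol bound $|e^{-it\sum\frac{4}{h^2}\sin^2(hk_j/2)}-e^{-it|k|^2}|\lesssim |t|h^2|k|^4$ with the high-frequency decay of the interpolation multiplier $\mathcal{P}_h(k)$).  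This lemma extracts $\sqrt{h}$ from $H^1$ data in one stroke, without ever differentiating the forcing.  The role of Strichartz is also different from what you propose: it is used \emph{only} to produce the a~priori bounds $\|u_h\|_{L^{q_*}_t([-T,T];L^\infty_h)}\lesssim\langle T\rangle^{1/q_*}\|u_{h,0}\|_{H^1_h}$ (Proposition~\ref{L^infty bound}) and its continuous analogue (Corollary~\ref{cor:L^infty}); these enter as the integrable Gr\"onwall weight in a plain $L^2$ inequality, not through an inhomogeneous Strichartz/Christ--Kiselev machinery on the difference.  So the paper's argument is both simpler and avoids the regularity bottleneck that your lattice-side forcing creates.
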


The proof of Theorem \ref{main theorem} follows the argument outlined in \cite{HY2}. Precisely, we consider two solutions in Duhamel's formulas,
$$u_h(t)=e^{-it(-\Delta_h)}(d_hu_0)-i\lambda\int_0^t e^{-i(t-s)(-\Delta_h)}(|u_h|^{p-1}u_h)(s)ds$$
and
$$u(t)=e^{-it(-\Delta)}u_0-i\lambda\int_0^t e^{-i(t-s)(-\Delta)}(|u|^{p-1}u)(s)ds,$$
where $u_h(t)=U_h(t)(d_hu_0)$ and $u(t)=U(t)u_0$. We aim to estimate the difference $p_hu_h(t)-u(t)$ directly by the standard Gr\"onwall's inequality. We accomplish this by making use of a ``time-averaged'' uniform-in-$h$ $L_h^\infty$-bound for nonlinear solutions $\{u_h(t)\}_{h\in(0,1]}$. Such a uniform bound can be obtained by applying uniform-in-$h$ Strichartz estimates for the discrete linear equation to the nonlinear problem.

\begin{remark}
$(i)$ The essential part of our analysis lies in proving the uniform Strichartz estimates for the linear equation. For this proof, we employ the Fourier analysis on a periodic lattice, and we develop harmonic analysis tools on the lattice, including the Littlewood--Paley theory. Indeed, a periodic lattice is a finite abelian group; thus, the Fourier and its inverse transforms are properly defined (see Section \ref{sec: Fourier transform}).\\
$(ii)$ As mentioned in Remark \ref{why difficult}, if the classical Bourgain's argument is adopted, the proof of the Strichartz estimates is transferred to a certain counting problem, but this is ultimately quite challenging. Instead, we employ an alternative approach of Vega \cite{V-92}. This approach is simpler and can also be applied to more general settings \cite{BGT-04}, but optimality is far from guaranteed. \\
$(iii)$ In higher dimensions $d\geq 3$, only local-in-time convergence can be derived from Theorem \ref{Strichartz}, because uniform Strichartz estimates hold for more regular initial data than those in the energy space. Indeed, if $d\geq 3$, the regularity $\frac{2}{q}+\epsilon$ of the Sobolev norm on the right-hand side in \eqref{Strichartz estimate 1} is always strictly greater than one (when $r = \infty$). 
\end{remark}

The remainder of the paper is organized as follows:  In Section \ref{sec: prelim}, we provide the collection of basic analysis tools. In particular, Fourier analysis on a periodic lattice is briefly presented, but some important inequalities, such as the Sobolev and the Gagliardo--Nirenberg inequalites, are also introduced. In Section \ref{sec: Strichartz proof}, we prove the key uniform Strichartz estimates (Theorem \ref{Strichartz}). In Section \ref{sec:GWP}, we establish a well-posedness theory for DNLS \eqref{DNLS} as well as uniform bounds for the nonlinear solutions. Finally, in Section \ref{sec: proof of main theorem}, we prove the main theorem (Theorem \ref{main theorem}).

\subsection*{Acknowledgement}
Y.H. was supported by the Basic Science Research Program through the National Research Foundation of Korea (NRF) funded by the Ministry of Education (NRF-2017R1C1B1008215). C.K. was supported by FONDECYT Postdoctorado 2017 Proyect No 3170067. S. N. was supported by the JSPS Grant-in-Aid for JSPS Research Fellow no. 17J01766. 
C.Y. was supported by the Samsung Science and Technology Foundation under Project Number SSTF-BA1702-02.

\section{Preliminaries}\label{sec: prelim}

\subsection{Basic inequalities on a periodic lattice}

Recall the definition of the Lebesgue spaces on a periodic lattice (see \eqref{L_h^r}). On a lattice, we often have a larger class of inequalities, compared to those in the continuum domain $\mathbb{T}^d$. For instance, by the definition, one can easily show the inequality
\begin{equation}\label{reversed L^p-L^q inequality}
\|u\|_{L_h^q}\lesssim h^{-(\frac{1}{p}-\frac{1}{q})}\|u\|_{L_h^p}\quad\textup{for all }q> p,
\end{equation}
while the embedding $L^p\hookrightarrow L^q$ fails on $\mathbb{T}^d$. However, these inequalities become meaningless in the continuum limit $h\to 0$. Therefore, we would have to use inequalities wherein the implicit constants are independent of $h\in(0,1]$.

We state the following inequalities, which hold uniformly in $h\in(0,1]$.
\begin{lemma}
$(i)$ (H\"older's inequality).
If $\frac1p+\frac1q=\frac{1}{r}$ and $1\le p,q, r\le \infty$, then 
\begin{equation}\label{ineq:holder}
\| uv\|_{L_h^r} \leq \| u\|_{L_h^p}\| v\|_{L_h^q}.
\end{equation}
$(ii)$ (Young's inequality)
If $\frac1p+\frac1q=\frac1r+1$, $1\le p,q,r \le \infty$ and $\frac1p+\frac1q\ge1$, then 
\begin{equation}\label{eq:young's}
\| u* v\|_{L_h^r} \leq \| u\|_{L_h^p}\|v\|_{L_h^q},
\end{equation}
where $*$ denotes the convolution operator defined by
\begin{equation}\label{eq:convolution}
(u*v)(x) = h^d \sum_{y\in \T_h^d} u(x-y)v(y).
\end{equation}
\end{lemma}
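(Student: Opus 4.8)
The plan is to observe that $\Td$ is a finite abelian group and to equip it with the measure $\mu_h$ that assigns mass $h^d$ to each of its $(2M)^d$ points --- equivalently, $h^d$ times the counting measure, a rescaling of Haar measure. Under this identification $L_h^r=L^r(\Td,\mu_h)$ with the norm \eqref{L_h^r}, the sum $h^d\sum_{x\in\Td}(\cdots)$ is $\int_{\Td}(\cdots)\,d\mu_h$, and the convolution \eqref{eq:convolution} is $(u*v)(x)=\int_{\Td}u(x-y)v(y)\,d\mu_h(y)$. Then \eqref{ineq:holder} and \eqref{eq:young's} become exactly the classical H\"older and Young convolution inequalities on a measure space (the latter also using translation invariance of $\mu_h$), both valid with constant $1$; in particular the constants are automatically independent of $h\in(0,1]$. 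So essentially nothing is to be proved once this reduction is made. Still, since the powers of $h^d$ built into the definitions are easy to misplace, I would record the elementary computations.

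For \eqref{ineq:holder} with $p,q<\infty$: note that $\tfrac1p+\tfrac1q=\tfrac1r$ forces $p,q\ge r$, so the exponents $\tfrac pr,\tfrac qr\in[1,\infty]$ are conjugate, and the scalar H\"older inequality applied to $|u(x)|^r$ and $|v(x)|^r$ gives $\sum_x|u(x)v(x)|^r\le\big(\sum_x|u(x)|^p\big)^{r/p}\big(\sum_x|v(x)|^q\big)^{r/q}$; multiplying by $h^d=h^{d(r/p+r/q)}$ and taking $r$-th roots yields \eqref{ineq:holder}, and the cases in which one of $p,q,r$ equals $\infty$ are immediate from \eqref{L_h^r}.

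For \eqref{eq:young's} I would first dispose of the endpoint cases --- where some exponent is $1$ or $\infty$ and the bound follows from \eqref{ineq:holder} or Minkowski's inequality --- and then assume $1<p,q,r<\infty$, so that $r>p$, $r>q$ and the auxiliary exponents $s:=\tfrac{pr}{r-p}$, $t:=\tfrac{qr}{r-q}$ lie in $(1,\infty)$ with $\tfrac1r+\tfrac1s+\tfrac1t=\tfrac1p+\tfrac1q-\tfrac1r=1$. For fixed $x$, factor
$$|u(x-y)|\,|v(y)|=\big(|u(x-y)|^p|v(y)|^q\big)^{1/r}\cdot|u(x-y)|^{1-p/r}\cdot|v(y)|^{1-q/r}$$
and apply the three-exponent H\"older inequality (with weight $\mu_h$) in $y$ with exponents $r,s,t$, recognizing $h^d\sum_y|u(x-y)|^p=\norm{u}_{L_h^p}^p$ and $h^d\sum_y|v(y)|^q=\norm{v}_{L_h^q}^q$ by translation invariance; this gives
$$|(u*v)(x)|\le\Big(h^d\sum_{y\in\Td}|u(x-y)|^p|v(y)|^q\Big)^{1/r}\norm{u}_{L_h^p}^{p/s}\norm{v}_{L_h^q}^{q/t}.$$
Raising to the power $r$, summing $h^d\sum_{x\in\Td}$, and evaluating the double sum by Fubini and translation invariance as $h^d\sum_x h^d\sum_y|u(x-y)|^p|v(y)|^q=\norm{u}_{L_h^p}^p\norm{v}_{L_h^q}^q$, I would obtain $\norm{u*v}_{L_h^r}^r\le\norm{u}_{L_h^p}^{p(1+r/s)}\norm{v}_{L_h^q}^{q(1+r/t)}$; since $1+\tfrac rs=\tfrac rp$ and $1+\tfrac rt=\tfrac rq$, both exponents are $r$, and an $r$-th root finishes the proof.

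There is no genuine analytic obstacle: every step transcribes a well-known fact. The only points that require care --- and the reason the statement is set up with these normalizations --- are that the factors of $h^d$ coming from \eqref{L_h^r} and from the convolution \eqref{eq:convolution} must cancel so that the final estimates carry no $h$-dependent constant, and that the auxiliary exponents $s,t$ remain admissible once the degenerate cases have been separated.
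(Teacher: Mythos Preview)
Your proposal is correct and takes essentially the same approach as the paper: both reduce \eqref{ineq:holder} and \eqref{eq:young's} to the classical H\"older and Young inequalities, tracking the powers of $h^d$ so that no $h$-dependent constant appears. The paper phrases this as a reduction to the $\ell^p$ sequence-space versions (pulling out $h^{d/r}$ and $h^{d(1+1/r)}$ explicitly), whereas you phrase it as working in $L^r(\Td,\mu_h)$ with $\mu_h=h^d\times\text{counting measure}$ and give a self-contained proof of Young via the three-exponent H\"older trick; these are cosmetic differences in bookkeeping rather than a genuinely different route.
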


\begin{proof}
Based on H\"older's and Young's inequalities for sequences, we prove that
$$\| uv\|_{L_h^r}=h^{\frac{d}{r}}\|uv\|_{\ell_x^r} \leq h^{\frac{d}{r}}\| u\|_{\ell_x^p}\| v\|_{\ell_x^q}=\| u\|_{L_h^p}\| v\|_{L_h^q}$$
and
$$\| u* v\|_{L_h^r}=h^{d(1+\frac{1}{r})}\Big\|\sum_{y\in\mathbb{T}_h^d} u(x-y)v(y)\Big\|_{\ell_x^r} \leq h^{d(\frac{1}{p}+\frac{1}{q})} \| u\|_{\ell_x^p}\|v\|_{\ell_x^q}=\| u\|_{L_h^p}\|v\|_{L_h^q}.$$
\end{proof}

\subsection{Fourier transform on a periodic lattice}\label{sec: Fourier transform}
Fix a large integer $M>0$. For the periodic lattice $\mathbb{T}_h^d$ with $h=\frac{\pi}{M}$ (see \eqref{T_h^d}), we denote its Fourier dual space, that is, the \textit{sparse} periodic lattice, by
\begin{equation}\label{eq:FDS}
\begin{aligned}
(\mathbb{T}_h^d)^*:&=\mathbb{Z}^d/\tfrac{2\pi}{h}\mathbb{Z}^d=(\mathbb{Z}/\tfrac{2\pi}{h}\mathbb{Z})^d\\
&=\Big\{-\tfrac{\pi}{h}, ..., -2, -1,0,1, 2, ..., \tfrac{\pi}{h}-1\Big\}^d\\
&=\Big\{-M, ..., -2, -1,0,1, 2, ..., M-1\Big\}^d.
\end{aligned}
\end{equation}
For a function $u:\mathbb{T}_h^d\to\mathbb{C}$, its Fourier transform $\mathcal{F}_hu: (\mathbb{T}_h^d)^*\to\mathbb{C}$ is defined by
$$(\mathcal{F}_hu)(k):=h^d\sum_{x\in \mathbb{T}_h^d} u(x)e^{-i k\cdot x}.$$
The inverse Fourier transform of a function $u:(\mathbb{T}_h^d)^*\to\mathbb{C}$ is given by
$$(\mathcal{F}_h^{-1}u)(x):=\frac{1}{(2\pi)^d}\sum_{k\in (\mathbb{T}_h^d)^*} u(k)e^{i k\cdot x}.$$
With abuse of notation, we write $\sum_{x\in \mathbb{T}_h^d}=\sum_x$ and $\sum_{k\in (\mathbb{T}_h^d)^*}=\sum_k$ unless there is confusion.

\begin{remark}
The above definitions are consistent with those on the periodic box $\mathbb{T}^d$. Indeed, formally, we have
$$\mathbb{T}_h^d\to\mathbb{T}^d,\quad(\mathbb{T}_h^d)^*\to\mathbb{Z}^d,\quad\mathcal{F}_h\to \mathcal{F},\quad\mathcal{F}_h^{-1}\to\mathcal{F}^{-1}$$
as $h\to0$, where $\mathcal{F}$ and $\mathcal{F}^{-1}$ are the Fourier and the inverse transforms on $\mathbb{T}^d$, respectively, 
$$(\mathcal{F}u)(k):=\int_{\mathbb{T}^d} u(x)e^{-i k\cdot x}dx,\quad (\mathcal{F}^{-1}u)(x):=\frac{1}{(2\pi)^d}\sum_{k\in \mathbb{Z}^d} u(k)e^{i k\cdot x}.$$
\end{remark}

We collect the properties of the Fourier and inverse Fourier transforms.
\begin{lemma}[Properties of the Fourier transform on a periodic lattice]\label{prelim properties}\ 
\begin{enumerate}
\item (Inversion)
$$\mathcal{F}_h^{-1}\circ \mathcal{F}_h=\textup{Id}\textup{ on }L^2(\mathbb{T}_h^d),\quad\mathcal{F}_h\circ \mathcal{F}_h^{-1}=\textup{Id}\textup{ on }L^2((\mathbb{T}_h^d)^*).$$
\item (Plancherel's theorem) $$\frac{1}{(2\pi)^d}\sum_{k}(\mathcal{F}_hu)(k)\overline{(\mathcal{F}_hv)(k)}=h^d\sum_{x}u(x)\bar{v}(x).$$
\item (Fourier transform of a product)
$$\mathcal{F}_h(uv)(k)=\frac{1}{(2\pi)^d}\sum_{k'}(\mathcal{F}_hu)(k')(\mathcal{F}_hv)(k-k').$$
\end{enumerate}
\end{lemma}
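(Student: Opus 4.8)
The plan is to reduce all three parts to a single \emph{discrete orthogonality relation} for the characters of the finite abelian group $\mathbb{T}_h^d$, and then to read off each statement from a short interchange of finite sums. Recall that, since $h=\tfrac{\pi}{M}$, the lattice $\mathbb{T}_h^d$ consists of the points $x=hm$ with $m\in\{-M,\dots,M-1\}^d$, while its dual $(\mathbb{T}_h^d)^*$ consists of $k\in\{-M,\dots,M-1\}^d$; both are groups of order $(2M)^d=(\tfrac{2\pi}{h})^d$. Because every sum involved ranges over a finite set, all interchanges of summation below are automatically justified.

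First I would establish the two orthogonality identities
\begin{equation*}
h^d\sum_{x}e^{i(k-k')\cdot x}=(2\pi)^d\,\delta_{k,k'},\qquad \frac{1}{(2\pi)^d}\sum_{k}e^{ik\cdot(x-x')}=\frac{1}{h^d}\,\delta_{x,x'},
\end{equation*}
valid for $k,k'\in(\mathbb{T}_h^d)^*$ and $x,x'\in\mathbb{T}_h^d$, respectively. Each left-hand side factors into a product of one-dimensional geometric sums $\sum_{m=-M}^{M-1}z^m$ in which $z$ is a $(2M)$-th root of unity; this sum equals $2M$ when $z=1$ and vanishes otherwise, since then $z\neq1$ while $z^{2M}=1$. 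The normalization $2M=\tfrac{2\pi}{h}$ converts the factor $(2M)^d$ into the stated constants $(2\pi)^d$ and $\tfrac{1}{h^d}$. I would also note that both sides are well defined on the respective quotient groups, as the characters $e^{ik\cdot x}$ are invariant under shifting $x$ by $2\pi\mathbb{Z}^d$ and under shifting $k$ by $\tfrac{2\pi}{h}\mathbb{Z}^d$.

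With these in hand, each item is essentially a one-line computation. For \textbf{(1)}, I would expand $(\mathcal{F}_h^{-1}\mathcal{F}_hu)(x)=\frac{1}{(2\pi)^d}\sum_k\bigl(h^d\sum_y u(y)e^{-ik\cdot y}\bigr)e^{ik\cdot x}$, interchange the sums, and apply the second orthogonality relation to collapse the $k$-sum to $\tfrac{1}{h^d}\delta_{x,y}$, which leaves $u(x)$; the identity $\mathcal{F}_h\circ\mathcal{F}_h^{-1}=\mathrm{Id}$ follows symmetrically using the first relation. For \textbf{(2)}, I would substitute the definitions of $\mathcal{F}_hu$ and $\overline{\mathcal{F}_hv}$ into the left-hand side, interchange sums, and use the second orthogonality relation to reduce the double $(x,y)$-sum to the diagonal sum $h^d\sum_x u(x)\bar v(x)$. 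For \textbf{(3)}, I would write $\mathcal{F}_h(uv)(k)=h^d\sum_x u(x)v(x)e^{-ik\cdot x}$, insert the inversion formula from part (1) for $u(x)$, and interchange sums; the residual $x$-sum is exactly $(\mathcal{F}_hv)(k-k')$, producing the convolution formula.

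I do not expect a genuine obstacle: the argument is elementary finite-dimensional linear algebra built on orthogonality of characters. The only points demanding care are the bookkeeping of the normalization constants (keeping $h^d$ and $(2\pi)^d$ straight, via the crucial identity $2M=\tfrac{2\pi}{h}$) and the verification that every object is well defined on the quotient groups, so that the shift $k-k'$ in part (3) is correctly interpreted modulo $\tfrac{2\pi}{h}\mathbb{Z}^d$ and $\mathcal{F}_hv$ is evaluated accordingly. The discrete orthogonality relation is the single ingredient driving all three parts.
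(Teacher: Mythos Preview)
Your proposal is correct and follows essentially the same approach as the paper: the paper isolates the two orthogonality identities as a separate lemma and then deduces (1), (2), (3) by the same interchange-of-sums computations you describe. The only cosmetic difference is in part (3), where the paper expands both $u$ and $v$ via the inversion formula and then collapses with the $x$-orthogonality relation, whereas you expand only $u$ and recognize the remaining $x$-sum directly as $(\mathcal{F}_hv)(k-k')$; the two are equivalent.
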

To prove Lemma \ref{prelim properties}, we need the following identities.
\begin{lemma}\label{useful summation lemma}
\begin{equation}\label{eq:USL1}
\frac{h^d}{(2\pi)^d}\sum_{k}e^{i k\cdot x}=\delta(x):=\left\{\begin{aligned}
&1&&\textup{if }x=0,\\
&0&&\textup{if }x\neq 0
\end{aligned}\right.
\end{equation}
and
\begin{equation}\label{eq:USL2}
\frac{h^d}{(2\pi)^d}\sum_{x}e^{i k\cdot x}=\delta(k).
\end{equation}

\end{lemma}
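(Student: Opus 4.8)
The plan is to reduce both identities to a single one-dimensional finite geometric series. Since the exponential factorizes as $e^{ik\cdot x}=\prod_{j=1}^d e^{ik_jx_j}$, and since both $\mathbb{T}_h^d$ and its Fourier dual $(\mathbb{T}_h^d)^*$ are $d$-fold Cartesian products of the index set $\{-M,\dots,M-1\}$, it suffices to prove the scalar statement
$$\frac{h}{2\pi}\sum_{k=-M}^{M-1}e^{ikx}=\delta(x)\qquad\textup{for }x=hm,\ m\in\{-M,\dots,M-1\},$$
and then multiply the $d$ resulting identities, using $\tfrac{h^d}{(2\pi)^d}=\prod_{j=1}^d\tfrac{h}{2\pi}$ and $\delta(x)=\prod_{j=1}^d\delta(x_j)$.

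For the scalar identity I would use $h=\pi/M$, so that $kx=\pi km/M$. If $m=0$ every summand equals $1$, the sum equals $2M$, and $\tfrac{h}{2\pi}\cdot 2M=\tfrac{\pi/M}{2\pi}\cdot 2M=1$. If $m\neq 0$, then $-M\le m\le M-1$ forces $\pi m/M\in(-\pi,\pi)\setminus\{0\}$, so $z:=e^{i\pi m/M}\neq 1$, and shifting the summation index by $M$ gives
$$\sum_{k=-M}^{M-1}z^k=z^{-M}\sum_{j=0}^{2M-1}z^j=z^{-M}\,\frac{z^{2M}-1}{z-1}=z^{-M}\,\frac{e^{2\pi i m}-1}{z-1}=0,$$
since $e^{2\pi im}=1$ for $m\in\mathbb{Z}$. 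This yields \eqref{eq:USL1}.

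Finally, \eqref{eq:USL2} is the same computation with $x$ and $k$ interchanged: writing $x=h\mu$ with $\mu\in\{-M,\dots,M-1\}^d$, one has $k\cdot x=\tfrac{\pi}{M}\,k\cdot\mu$, so $\tfrac{h^d}{(2\pi)^d}\sum_x e^{ik\cdot x}$ is exactly the left-hand side of \eqref{eq:USL1} with the roles of the summation variable and the parameter swapped, and the tensorized geometric-series argument applies verbatim. I do not expect any genuine obstacle here; the only point that deserves a line of care is that the index range $\{-M,\dots,M-1\}$ has been chosen precisely so that $e^{i\pi m/M}=1$ implies $m=0$, which is what makes all off-diagonal terms cancel.
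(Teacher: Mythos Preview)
Your proof is correct and follows essentially the same approach as the paper: factorize over coordinates to reduce to a one-dimensional geometric sum, then evaluate that sum directly using $h=\pi/M$ and $e^{2\pi i m}=1$. One cosmetic slip: the range $-M\le m\le M-1$ gives $\pi m/M\in[-\pi,\pi)\setminus\{0\}$ rather than $(-\pi,\pi)\setminus\{0\}$, but this does not affect the conclusion $z\neq 1$.
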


\begin{proof}
We only prove \eqref{eq:USL1}, because the proof of \eqref{eq:USL2} is similar. Recalling that $h=\frac{\pi}{M}$ and $x=(x_1,..., x_d)=(hm_1, ..., hm_d)\in \mathbb{T}_h^d$ where $m_j\in\{-M, -M+1,..., M-2, M-1\}$, we evaluate the geometric sum
$$\begin{aligned}
\sum_{k_j=-M}^{M-1}e^{i k_jx_j}&=\left\{\begin{aligned}
&2M=\frac{2\pi}{h} &&\textup{if }x_j=0,\\
&\frac{e^{-i Mx_j}(e^{2iM x_j}-1)}{e^{i x_j}-1}=\frac{e^{-i Mx_j}(e^{2\pi i m_j }-1)}{e^{i x_j}-1}=0 &&\textup{if }x_j\in\mathbb{T}_h\setminus\{0\}
\end{aligned}\right.\\
&=\frac{2\pi}{h}\delta(x_j).
\end{aligned}$$
Thus, we conclude that
$$\sum_{k}e^{i k\cdot x}=\prod_{j=1}^d\sum_{k_j=-M}^{M-1}e^{i k_jx_j}=\prod_{j=1}^d\frac{2\pi}{h}\delta(x_j)=\frac{(2\pi)^d}{h^d}\delta(x),$$
where we use the fact that $(\mathbb{T}_h^d)^*=\{-M, -M+1,..., M-1\}^d$
\end{proof}

\begin{proof}[Proof of Lemma \ref{prelim properties}]
(1) A direct calculation in addition to \eqref{eq:USL1} yields
\begin{align*}
\left(\mathcal{F}_h^{-1}(\mathcal{F}_hu)\right)(x)&=\frac{1}{(2\pi)^d}\sum_k \left\{h^d\sum_{x'} u(x')e^{-i k\cdot x'}\right\}e^{i k\cdot x}\\
&= \sum_{x'}\left\{\frac{h^d}{(2\pi)^d} \sum_{k}e^{i k\cdot (x-x')}\right\} u(x')\\
&= \sum_{x'}\delta(x-x') u(x')=u(x).
\end{align*}
Analogously, one can show that $\left(\mathcal{F}_h(\mathcal{F}_h^{-1}u)\right)(k)=u(k)$.\\
(2) Similarly, using \eqref{eq:USL1}, we prove that
\begin{align*}
\frac{1}{(2\pi)^d}\sum_{k}(\mathcal{F}_hu)(k)\overline{(\mathcal{F}_hv)(k)}&=\frac{1}{(2\pi)^d}\sum_{k}\left\{h^d\sum_{x} u(x)e^{- i k\cdot x}\right\}\left\{h^d\sum_{x'} \overline{v(x')}e^{i k\cdot x'}\right\}\\
&=h^d\sum_{x}\sum_{x'} u(x)\overline{v(x')} \left\{\frac{h^d}{(2\pi)^d}\sum_{k}e^{-i k\cdot (x-x')}\right\}\\
&=h^d\sum_{x}u(x)\overline{v(x)}.
\end{align*}
(3) We write
$$\begin{aligned}
\mathcal{F}_h(uv)(k)&=h^d\sum_{x} u(x)v(x) e^{-ik\cdot x}\\
&=h^d\sum_{x} \left\{\frac{1}{(2\pi)^d}\sum_{\ell}(\mathcal{F}_hu)(\ell)e^{i\ell\cdot x}\right\}\left\{\frac{1}{(2\pi)^d}\sum_{\ell'}(\mathcal{F}_hv)(\ell')e^{i\ell'\cdot x}\right\}e^{-ik\cdot x}\\
&=\frac{1}{(2\pi)^d}\sum_{\ell}\sum_{\ell'}(\mathcal{F}_hu)(\ell)(\mathcal{F}_hv)(\ell') \left\{\frac{h^d}{(2\pi)^d}\sum_{x}e^{i(\ell+\ell'-k)\cdot x}\right\}.
\end{aligned}$$
Then, applying \eqref{eq:USL2} and summing out $\ell'$, we prove the desired identity.
\end{proof}

By the Fourier transform, we see that the discrete Laplacian is a Fourier multiplier operator. 
\begin{lemma}[Discrete Laplacian as a Fourier multiplier operator]\label{Laplacian as a multiplier}
The discrete Laplacian $-\Delta_h$ is the Fourier multiplier of the symbol $\sum_{j=1}^d\frac{4}{h^2}\sin^2(\frac{hk_j}{2})=\sum_{j=1}^d\frac{2}{h^2}(1-\cos hk_j)$.
\end{lemma}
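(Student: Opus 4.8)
The plan is to verify the claim by a direct computation on the Fourier side, using the translation property of $\mathcal{F}_h$ on the finite abelian group $\mathbb{T}_h^d$. First I would record the shift identity: for $u_h:\mathbb{T}_h^d\to\mathbb{C}$ and the translate $(\tau_{he_j}u_h)(x):=u_h(x+he_j)$ one has
$$\mathcal{F}_h(\tau_{he_j}u_h)(k)=h^d\sum_x u_h(x+he_j)e^{-ik\cdot x}=e^{ik\cdot(he_j)}\,h^d\sum_{x'}u_h(x')e^{-ik\cdot x'}=e^{ihk_j}(\mathcal{F}_hu_h)(k),$$
where the change of variables $x'=x+he_j$ is legitimate because the sum runs over a full period of $\mathbb{T}_h^d$ and the summand is $\frac{2\pi}{h}\mathbb{Z}^d$-periodic in $x$; similarly $\mathcal{F}_h(\tau_{-he_j}u_h)(k)=e^{-ihk_j}(\mathcal{F}_hu_h)(k)$.

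Next I would apply $\mathcal{F}_h$ to the definition \eqref{discrete Laplacian} of $\Delta_h u_h$. By linearity of the sum over $x$ and the shift identity,
$$\mathcal{F}_h(\Delta_h u_h)(k)=\sum_{j=1}^d\frac{e^{ihk_j}+e^{-ihk_j}-2}{h^2}(\mathcal{F}_hu_h)(k).$$
Finally I would simplify the symbol using $e^{ihk_j}+e^{-ihk_j}=2\cos(hk_j)$ and the half-angle identity $2-2\cos\theta=4\sin^2(\theta/2)$, which gives
$$\frac{e^{ihk_j}+e^{-ihk_j}-2}{h^2}=-\frac{2(1-\cos hk_j)}{h^2}=-\frac{4}{h^2}\sin^2\!\Big(\frac{hk_j}{2}\Big).$$
Summing in $j$ shows that $-\Delta_h=\mathcal{F}_h^{-1}\big(\sum_{j=1}^d\frac{4}{h^2}\sin^2(\frac{hk_j}{2})\,\mathcal{F}_h\,\cdot\,\big)=\mathcal{F}_h^{-1}\big(\sum_{j=1}^d\frac{2}{h^2}(1-\cos hk_j)\,\mathcal{F}_h\,\cdot\,\big)$, which is exactly the assertion.

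There is essentially no hard step here; the only point requiring care is the justification of the index shift in the Fourier transform, i.e.\ that translating the summation variable on $\mathbb{T}_h^d$ produces no boundary terms because the sum is over a full period — this is the discrete analogue of integration by parts being free of boundary terms on $\mathbb{T}^d$, and it is immediate from the group structure of $\mathbb{T}_h^d$. Everything else is the elementary trigonometric identity $2-2\cos\theta=4\sin^2(\theta/2)$.
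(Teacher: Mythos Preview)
Your proof is correct and follows essentially the same approach as the paper: apply $\mathcal{F}_h$ to the definition of $\Delta_h$, use the translation identity to produce the factors $e^{\pm ihk_j}$, and simplify with $2-2\cos\theta=4\sin^2(\theta/2)$. The paper's version is more terse (it writes the computation in one line without isolating the shift identity), but the content is identical.
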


\begin{proof}
By the definition \eqref{discrete Laplacian}, 
$$\mathcal{F}_h\left((-\Delta_h) u\right)(k)=\sum_{j=1}^d\frac{2-e^{ih k_j}-e^{-ih k_j}}{h^2}(\mathcal{F}_hu)(k)=\sum_{j=1}^d\frac{2(1-\cos hk_j)}{h^2}(\mathcal{F}_hu)(k).$$
\end{proof}

\begin{remark}
The discrete Laplacian formally converges to the Laplacian on $\mathbb{T}^d$ as $h\to 0$, because given $k\in(\mathbb{T}_h^d)^*$, the multiplier for the discrete Laplacian converges to that for the Laplacian on $\mathbb{T}^d$, i.e., $\sum_{j=1}^d\frac{2}{h^2}(1-\cos(hk_j))\to |k|^2$ as $h\to0$.
\end{remark}

\subsection{Dyadic decompositions and Sobolev spaces}\label{sec: Littlewood-Paley}
Let
$$N_*=2^{\ell_*}\quad\textup{with}\quad\ell_*=\lceil\log_2(\tfrac{h}{\pi})\rceil -1,$$
%
%
%
%
%
where $\lceil a\rceil$ denotes the smallest integer greater than or equal to $a$. For a dyadic number $N = 2^\ell$ with $\ell\in\mathbb{Z}$ such that $N_*\leq N\leq 1$, we define the frequency projection operator $P_N=P_N^h$ by
\begin{equation}\label{LP projection}
(P_N u)(x) := \left\{\begin{aligned}
&\frac{1}{(2\pi)^d}\sum_{\frac{\pi N}{2h}<\max|k_j|\le \frac {\pi N}{h}}
(\mathcal F_hu) (k)e^{i k\cdot x }&&\textup{if}\quad 2N_*\le N\leq 1,\\
&\frac{1}{(2\pi)^d}(\mathcal F_hu) (0) &&\textup{if}\quad N=N_*.
\end{aligned}\right.
\end{equation}
For $s\in\mathbb{R}$, we define the Sobolev space $H_h^s$ by the Hilbert space equipped with the norm
\begin{equation}\label{eq:Hs}
\|u\|_{H_h^s}:=\left\{\frac{1}{(2\pi)^d}\sum_{k}\langle k\rangle^{2s}\left|(\mathcal{F}_hu)(k)\right|^2\right\}^{1/2}.
\end{equation}
We observe that 
$$\|u\|_{H_h^s}^2 \sim \sum_{N_* \le N \le 1} \left\langle\frac{N}{h}\right\rangle^{2s}\|P_N f\|_{L_h^2}^2.$$

The following Sobolev and Gagliardo--Nirenberg inequalities are used in our analysis.

\begin{lemma}[Sobolev embedding]\label{Sobolev inequality}
Suppose that $0<s\leq\frac{d}{2}$, $q\geq 2$ and $\frac{1}{q}=\frac{1}{2}-\frac{s}{d}$. Then, for any $\epsilon>0$, we have
\begin{equation}\label{eq: Sobolev inequality}
\|u\|_{L_h^q}\lesssim \|u\|_{H_h^{s+\epsilon}}.
\end{equation}
\end{lemma}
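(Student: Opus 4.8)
The plan is to establish the Sobolev embedding on the periodic lattice by transferring the corresponding inequality from $\R^d$, using the fact that a function on $\T_h^d$ can be viewed as a periodic function on $h\Z^d$, but with attention paid so that all constants are uniform in $h\in(0,1]$. First I would set up the Littlewood--Paley decomposition: writing $u=\sum_{N_*\le N\le 1}P_N u$, the frequency of $P_N u$ is essentially localized to $|k|\sim N/h$, i.e.\ at physical scale $N/h$ (with the low block $N=N_*$ carrying only the mean). The strategy is then a Bernstein-type inequality on each dyadic block, $\|P_N u\|_{L_h^q}\lesssim \langle N/h\rangle^{s}\|P_N u\|_{L_h^2}$ with $\tfrac1q=\tfrac12-\tfrac sd$, followed by summation over $N$.

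The Bernstein inequality itself I would prove by the usual route: write $P_N u = K_N * u$ where $K_N=\mathcal F_h^{-1}(\widetilde\chi_N)$ is the kernel of a smooth (or sharp) frequency cutoff to $\{|k_j|\sim \pi N/h\}$, and use Young's inequality \eqref{eq:young's} in the form $\|K_N*u\|_{L_h^q}\le\|K_N\|_{L_h^s}\|u\|_{L_h^2}$ where $\tfrac1s=\tfrac12+\tfrac1q-1$ — wait, more carefully one uses $L^2*L^{r}\hookrightarrow L^q$ with $\tfrac1r=1+\tfrac1q-\tfrac12$, so it remains to bound $\|K_N\|_{L_h^r}$. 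One computes $\|K_N\|_{L_h^1}\lesssim 1$ and $\|K_N\|_{L_h^\infty}\lesssim (N/h)^d$ uniformly in $h$ (the first from the $\ell^1$ bound on the Fourier side after summation by parts / the standard non-stationary phase estimate for the Dirichlet-type kernel, the second trivially since there are $\sim (N/h)^d$ frequencies each contributing $O(1)$), and interpolates to get $\|K_N\|_{L_h^r}\lesssim (N/h)^{d/r'}=(N/h)^{d(1-1/r)} = (N/h)^{s}$, exactly matching the claimed loss. The boundary interpolation endpoint must be handled correctly so that the power of $N/h$ is $s=d(\tfrac12-\tfrac1q)$.

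With the block estimate in hand, I would finish by Cauchy--Schwarz in the dyadic sum: since $q\ge2$ one has $\|u\|_{L_h^q}\le\sum_N\|P_N u\|_{L_h^q}\lesssim \sum_N\langle N/h\rangle^{s}\|P_N u\|_{L_h^2}$, and then
\begin{equation*}
\sum_{N_*\le N\le 1}\langle\tfrac Nh\rangle^{s}\|P_N u\|_{L_h^2}
\le\Big(\sum_N\langle\tfrac Nh\rangle^{-2\epsilon}\Big)^{1/2}\Big(\sum_N\langle\tfrac Nh\rangle^{2(s+\epsilon)}\|P_N u\|_{L_h^2}^2\Big)^{1/2}\lesssim_\epsilon \|u\|_{H_h^{s+\epsilon}},
\end{equation*}
where the first factor is a finite geometric series bounded uniformly in $h$ (this is precisely why the small loss $\epsilon$ is inserted, to make the dyadic sum in $N$ converge with an $h$-independent constant), and the second is comparable to $\|u\|_{H_h^{s+\epsilon}}$ by the equivalence $\|u\|_{H_h^s}^2\sim\sum_N\langle N/h\rangle^{2s}\|P_N u\|_{L_h^2}^2$ recorded just before the lemma. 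The main obstacle is the uniform-in-$h$ kernel bound $\|K_N\|_{L_h^1}\lesssim 1$: one must show that the discrete sharp cutoff kernel is $L^1_h$-bounded independently of how fine the lattice is. This is handled by a discrete summation-by-parts (Abel) argument exploiting that the cutoff is to a box of sidelength $\sim N/h$, reducing to the one-dimensional estimate $\sum_{m\in\Z/(2M)\Z}\big|\sum_{|j|\lesssim N M}e^{i\pi j m/M}\big|\lesssim NM\cdot\tfrac1{N}$ type bounds; alternatively, using a \emph{smooth} cutoff $\chi(hk/N)$ in place of the sharp one makes the kernel estimate a routine (discrete) non-stationary phase computation, and the sharp projections $P_N$ are then controlled by finitely many smooth ones, at the cost of an irrelevant constant.
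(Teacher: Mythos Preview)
Your overall strategy---Littlewood--Paley decomposition, a Bernstein inequality on each block, then summation with an $\epsilon$ loss---matches the paper's exactly. The summation step via Cauchy--Schwarz is equivalent to the paper's direct bound $\|P_N u\|_{L_h^2}\lesssim \langle N/h\rangle^{-(s+\epsilon)}\|u\|_{H_h^{s+\epsilon}}$ followed by a geometric sum.

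Where you diverge is in the proof of Bernstein. You go through Young's inequality and a kernel bound, and you flag $\|K_N\|_{L_h^1}\lesssim 1$ as the main obstacle. Your claim that this holds for the \emph{sharp} cutoff is wrong: the one-dimensional kernel is a (difference of) Dirichlet kernels, and $h\sum_{x\in\T_h}|D_n(x)|\sim\log n\sim\log(N/h)$, not $O(1)$. The ``summation-by-parts'' bound you sketch, $\sum_{m}\big|\sum_{|j|\lesssim NM}e^{i\pi jm/M}\big|\lesssim M$, actually picks up an extra $\log M$ from the tail $\sum_{m\neq 0}M/|m|$. Your smooth-cutoff fallback does rescue the argument, since then the kernel decays rapidly and $\|K_N\|_{L_h^1}\lesssim 1$ is routine, and one can sandwich the sharp $P_N$ by a smooth $\tilde P_N$ with $\tilde P_N P_N=P_N$.

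The paper sidesteps all of this with a $TT^*$ argument: from the trivial endpoints $\|P_N u\|_{L_h^\infty}\lesssim (N/h)^d\|u\|_{L_h^1}$ and $\|P_N u\|_{L_h^2}\le\|u\|_{L_h^2}$ one interpolates to $\|P_N u\|_{L_h^q}\lesssim (N/h)^{2s}\|u\|_{L_h^{q'}}$, then the self-adjointness $\|P_N u\|_{L_h^2}^2=\langle P_N u,u\rangle$ and duality give $\|P_N u\|_{L_h^q}\lesssim (N/h)^s\|u\|_{L_h^2}$ directly. This never touches an $L^1$ kernel norm and works with the sharp projections as defined, which is why it is cleaner than your route.
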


\begin{lemma}[Gagliardo--Nirenberg inequality]\label{Lem:GN}
Suppose $\frac1p=\frac12-\frac{\theta}{d}$, $1<p\le \infty$ and $0<\theta<1$. Then we have
\begin{align*}
\| f\|_{L_h^p} \ls\|f\|_{L_h^2}^{1-\theta}  \| f\|_{H_h^1}^{\theta}.
\end{align*}
\end{lemma}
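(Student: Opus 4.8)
The plan is to derive the estimate from a Littlewood--Paley decomposition, a lossless (``$\epsilon$-free'') Bernstein inequality on $\Td$, and an optimization over a frequency cutoff. We will \emph{not} use the Sobolev embedding of Lemma~\ref{Sobolev inequality} directly, since it costs an $\epsilon$ of regularity, whereas the sharp exponent $\theta$ in the Gagliardo--Nirenberg inequality requires a lossless frequency-localized bound. Throughout we use the two dyadic characterizations recorded after \eqref{eq:Hs}, namely $\sum_{N_*\le N\le1}\|P_Nf\|_{L_h^2}^2\sim\|f\|_{L_h^2}^2$ and $\sum_{N_*\le N\le1}\langle N/h\rangle^2\|P_Nf\|_{L_h^2}^2\sim\|f\|_{H_h^1}^2$.

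The first and central step is the Bernstein-type bound
\[
\|P_Nf\|_{L_h^q}\ls\langle N/h\rangle^{d(\frac12-\frac1q)}\|P_Nf\|_{L_h^2}\qquad(2\le q\le\infty),
\]
uniformly in $h\in(0,1]$ and in the dyadic $N$ with $N_*\le N\le1$. By interpolation with the trivial $L_h^2$ bound it suffices to treat $q=\infty$. For $2N_*\le N\le1$, the Fourier inversion formula and \eqref{LP projection} give $P_Nf(x)=(2\pi)^{-d}\sum_{k\in S_N}(\mathcal F_hf)(k)e^{ik\cdot x}$, where $S_N=\{k\in(\Td)^*:\frac{\pi N}{2h}<\max_j|k_j|\le\frac{\pi N}{h}\}$; by Cauchy--Schwarz in $k$ and Plancherel (Lemma~\ref{prelim properties}(2)),
\[
\|P_Nf\|_{L_h^\infty}\le(2\pi)^{-d}(\#S_N)^{1/2}\Big(\sum_{k\in S_N}|(\mathcal F_hf)(k)|^2\Big)^{1/2}=(2\pi)^{-d/2}(\#S_N)^{1/2}\|P_Nf\|_{L_h^2},
\]
and the claim follows because $\#S_N\ls(N/h)^d$. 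For $N=N_*$, the block $P_{N_*}f$ is a constant function and the bound is immediate since $N_*/h\sim1$.

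Next, since $d(\frac12-\frac1p)=\theta$ by hypothesis (and $p\ge2$), the triangle inequality and the Bernstein bound give, for any real $\mu\ge1$,
\[
\|f\|_{L_h^p}\le\sum_{N_*\le N\le1}\|P_Nf\|_{L_h^p}\ls\sum_{\langle N/h\rangle\le\mu}\langle N/h\rangle^\theta\|P_Nf\|_{L_h^2}+\sum_{\langle N/h\rangle>\mu}\langle N/h\rangle^\theta\|P_Nf\|_{L_h^2}.
\]
To the first sum I would apply Cauchy--Schwarz over the (finitely many) dyadic scales together with the top-heavy geometric bound $\sum_{\langle N/h\rangle\le\mu}\langle N/h\rangle^{2\theta}\ls\mu^{2\theta}$ (here $\theta>0$ is used) and the $L^2$ square-function identity, yielding $\ls\mu^\theta\|f\|_{L_h^2}$. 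To the second sum I would write $\langle N/h\rangle^\theta=\langle N/h\rangle^{\theta-1}\langle N/h\rangle$, apply Cauchy--Schwarz, and use the bottom-heavy geometric bound $\sum_{\langle N/h\rangle>\mu}\langle N/h\rangle^{2(\theta-1)}\ls\mu^{2(\theta-1)}$ (here $\theta<1$ is used) and the $H^1$ square-function identity, yielding $\ls\mu^{\theta-1}\|f\|_{H_h^1}$. Altogether $\|f\|_{L_h^p}\ls\mu^\theta\|f\|_{L_h^2}+\mu^{\theta-1}\|f\|_{H_h^1}$ for every $\mu\ge1$. Finally, $\langle k\rangle\ge1$ gives $\|f\|_{H_h^1}\ge\|f\|_{L_h^2}$, so the choice $\mu:=\|f\|_{H_h^1}/\|f\|_{L_h^2}\ge1$ is admissible and yields $\|f\|_{L_h^p}\ls\|f\|_{L_h^2}^{1-\theta}\|f\|_{H_h^1}^\theta$ (both terms on the right equal this product). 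All implicit constants are independent of $h$, because the two geometric series have ratios ($2^{2\theta}$, respectively $2^{2(\theta-1)}$) bounded away from $1$ uniformly in $h$.

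The only step I expect to require real care is the first one: the $\epsilon$-free Bernstein inequality with an $h$-uniform constant. This rests on the counting estimate $\#S_N\ls(N/h)^d$ for the dyadic $\ell^\infty$-shells of the \emph{finite} Fourier-dual lattice $(\Td)^*$ — which must remain valid for the outermost shells, truncated by the boundary $\max_j|k_j|\sim\pi/h$ — together with the correct bookkeeping of the Plancherel normalization $(2\pi)^{-d}$ and of the extreme scale $N=N_*$. Once that is in place, the remaining dyadic summation and the optimization are entirely standard.
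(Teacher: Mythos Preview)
Your proof is correct and follows essentially the same approach as the paper: a lossless Bernstein inequality (proved in Appendix~\ref{sec: appendix Sobolev} as Lemma~\ref{Bernstein inequality}) followed by splitting the dyadic sum at the threshold $\mu\sim\|f\|_{H_h^1}/\|f\|_{L_h^2}$ and summing geometrically on each side. The only cosmetic differences are that the paper normalizes $\|f\|_{L_h^2}=1$, uses the crude bound $\|P_Ng\|_{L_h^2}\le\|g\|_{L_h^2}$ in place of your Cauchy--Schwarz plus square-function step, and makes an explicit case distinction according to whether the optimal cutoff $R=h\|f\|_{H_h^1}$ falls inside the dyadic range $[N_*,1]$ --- a split you avoid by working with $\langle N/h\rangle$ and allowing the high-frequency sum to be empty.
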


Proofs of Lemmas \ref{Sobolev inequality} and \ref{Lem:GN} are given in Appendix \ref{sec: appendix Sobolev}. We expect the inequality \eqref{eq: Sobolev inequality} to be improved to the sharp version $(\epsilon=0)$ by adopting the argument in \cite{BO} for instance. Nevertheless, in this study, we employ a nonsharp version, because its proof is simpler but also sufficient for our analysis.

\subsection{Norm equivalence}
There are several ways to define Sobolev spaces on a periodic lattice. The following lemma shows that the Sobolev norm defined by \eqref{eq:Hs} is equivalent to that by the discrete derivatives \eqref{discrete derivative} as well as that by $\sqrt{1-\Delta_h}$, that is, the Fourier multiplier of the symbol $(1+\sum_{j=1}^{d} \frac{4}{h^2}\sin^2(\frac{hk_j}{2}))^{1/2}$.

\begin{lemma}[Norm equivalence]\label{Lem:normequivalence}
$$\|u\|_{H_h^1}\sim \|\sqrt{1-\Delta_h}u\|_{L_h^2}=\left\{\|u\|_{L_h^2}^2+\| D_{h}^+ u\|_{L_h^2}^2\right\}^{1/2},$$
where $D_h^+=(D_{h,1}^+, ..., D_{h,d}^+)$.
\end{lemma}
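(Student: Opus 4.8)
The plan is to work entirely on the Fourier side, where all three quantities become multiplier norms, and then use the elementary inequality $\sin^2\theta \le \theta^2 \le \tfrac{\pi^2}{4}\sin^2\theta$ for $|\theta| \le \tfrac{\pi}{2}$ to compare the symbols. First I would record the three symbols explicitly. By definition \eqref{eq:Hs}, $\|u\|_{H_h^1}^2 = \frac{1}{(2\pi)^d}\sum_k \langle k\rangle^2 |(\mathcal{F}_h u)(k)|^2 = \frac{1}{(2\pi)^d}\sum_k (1+|k|^2)|(\mathcal{F}_h u)(k)|^2$. By Lemma \ref{Laplacian as a multiplier}, $\|\sqrt{1-\Delta_h}u\|_{L_h^2}^2 = \frac{1}{(2\pi)^d}\sum_k \bigl(1+\sum_{j=1}^d \tfrac{4}{h^2}\sin^2(\tfrac{hk_j}{2})\bigr)|(\mathcal{F}_h u)(k)|^2$, using Plancherel (Lemma \ref{prelim properties}(2)). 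For the third quantity, I would compute $\mathcal{F}_h(D_{h,j}^+ u)(k) = \frac{e^{ihk_j}-1}{h}(\mathcal{F}_h u)(k)$ directly from \eqref{discrete derivative}, so that $|\mathcal{F}_h(D_{h,j}^+ u)(k)|^2 = \frac{|e^{ihk_j}-1|^2}{h^2}|(\mathcal{F}_h u)(k)|^2 = \frac{4}{h^2}\sin^2(\tfrac{hk_j}{2})|(\mathcal{F}_h u)(k)|^2$; summing over $j$ and adding $\|u\|_{L_h^2}^2$, Plancherel shows $\|u\|_{L_h^2}^2 + \|D_h^+ u\|_{L_h^2}^2$ has exactly the same symbol as $\|\sqrt{1-\Delta_h}u\|_{L_h^2}^2$. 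This already gives the claimed equality.

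It then remains to prove the equivalence $1 + |k|^2 \sim 1 + \sum_{j=1}^d \tfrac{4}{h^2}\sin^2(\tfrac{hk_j}{2})$ uniformly over $k \in (\mathbb{T}_h^d)^*$ and $h \in (0,1]$. Here I would use that for $k \in (\mathbb{T}_h^d)^*$ each component satisfies $|hk_j| \le \pi$, so $\bigl|\tfrac{hk_j}{2}\bigr| \le \tfrac{\pi}{2}$, and the two-sided bound $\tfrac{4}{\pi^2}\bigl(\tfrac{hk_j}{2}\bigr)^2 \le \sin^2(\tfrac{hk_j}{2}) \le \bigl(\tfrac{hk_j}{2}\bigr)^2$. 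Multiplying by $\tfrac{4}{h^2}$ gives $\tfrac{4}{\pi^2}k_j^2 \le \tfrac{4}{h^2}\sin^2(\tfrac{hk_j}{2}) \le k_j^2$; summing over $j$ and adding $1$ yields
$$
\tfrac{4}{\pi^2}(1+|k|^2) \le 1 + \sum_{j=1}^d \tfrac{4}{h^2}\sin^2\!\bigl(\tfrac{hk_j}{2}\bigr) \le 1+|k|^2,
$$
with constants independent of $h$ and $k$. Inserting this into the Fourier-side expressions for the squared norms gives $\|u\|_{H_h^1} \sim \|\sqrt{1-\Delta_h}u\|_{L_h^2}$, completing the proof.

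The argument is essentially routine; the only point requiring a little care is the restriction $|hk_j|\le\pi$, which is exactly what makes the comparison $\sin^2\theta\sim\theta^2$ valid — on an unbounded lattice one would instead only have $\tfrac{4}{h^2}\sin^2(\tfrac{hk_j}{2})\lesssim 1$ and no lower bound in terms of $k_j^2$, so the norm equivalence would fail with an $h$-independent constant. Thus the main (minor) obstacle is bookkeeping the frequency range $(\mathbb{T}_h^d)^* = \{-M,\dots,M-1\}^d$ from \eqref{eq:FDS} and noting $h = \pi/M$, which together force $|hk_j| \le \pi$. Everything else is Plancherel and a scalar inequality.
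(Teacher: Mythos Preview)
Your proposal is correct and follows essentially the same route as the paper: the equality is obtained by computing the Fourier symbol of $D_{h,j}^+$ (equivalently, by $(D_h^+)^*D_h^+=-\Delta_h$), and the equivalence follows from Plancherel together with the pointwise symbol comparison $1+\sum_j \tfrac{4}{h^2}\sin^2(\tfrac{hk_j}{2})\sim 1+|k|^2$ on $(\mathbb{T}_h^d)^*$. Your version simply makes explicit the elementary bound $\tfrac{2}{\pi}|\theta|\le|\sin\theta|\le|\theta|$ on $|\theta|\le\tfrac{\pi}{2}$ and the range check $|hk_j|\le\pi$ that the paper leaves implicit.
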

\begin{proof}
The first equivalence follows from the Plancherel theorem and the pointwise bound $(1+\sum_{j=1}^{d} \frac{4}{h^2}\sin^2(\frac{hk_j}{2}))^{1/2}\sim\sqrt{1+|k|^2}$ on $(\mathbb{T}_h^d)^*$. The second identity follows from $(D_h)^*D_h=-\Delta_h$.
\end{proof}

%
%
%
%

\section{Uniform Strichartz estimates on a periodic lattice}\label{sec: Strichartz proof}

This section is devoted to the proof of our key uniform-in-$h$ Strichartz estimates (Theorem \ref{Strichartz}). First, we reduce the proof to the following dispersive estimate. 
\begin{proposition}[Dispersive estimate]\label{dispersion estimate}
Let $h\in(0,1]$. For any dyadic number $N$ with $N_*:=2^{\lceil\log_2(\tfrac{h}{\pi})\rceil-1}\leq N\leq 1$,  there exists $c>0$ such that if $|t|\le \frac{ch}{N}$, then
\begin{equation}\label{Linftybound:Dyadic}
\| e^{it \Delta_h } P_{\leq N}u_{h,0}\|_{L^\infty_h} \ls  \left(\frac{N}{h|t|}\right)^{\frac d3}\|u_{h,0} \|_{L_h^1},
\end{equation}
where
$$(P_{\leq N}u_{h,0})(x):=\frac{1}{(2\pi)^d}\sum_{\max|k_j|\leq \frac{\pi N}{h}}(\mathcal{F}_hu_{h,0})(k)e^{ik\cdot x}.$$
\end{proposition}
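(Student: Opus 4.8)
The plan is to obtain the dispersive bound \eqref{Linftybound:Dyadic} by analyzing the kernel of the localized propagator directly through its Fourier representation, following the stationary/non-stationary phase philosophy adapted to the lattice. Writing $e^{it\Delta_h}P_{\leq N}u_{h,0} = K_{t,N} * u_{h,0}$ in the convolution sense of \eqref{eq:convolution}, Young's inequality \eqref{eq:young's} reduces everything to the pointwise bound
$$
\|K_{t,N}\|_{L_h^\infty} \ls \Big(\frac{N}{h|t|}\Big)^{d/3},
$$
where, by Lemma \ref{Laplacian as a multiplier} and the inverse Fourier transform,
$$
K_{t,N}(x) = \frac{1}{(2\pi)^d}\sum_{\max|k_j|\leq \frac{\pi N}{h}} e^{-it\sum_{j=1}^d\frac{2}{h^2}(1-\cos hk_j)}\, e^{ik\cdot x}.
$$
Since the symbol tensorizes over coordinates, the kernel factors as a product of one-dimensional kernels, so the whole estimate reduces to the $d=1$ case with exponent $1/3$, then raised to the $d$-th power. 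First I would rescale: set $\xi = hk \in h\mathbb{Z} \cap [-\pi N, \pi N]$, so that the one-dimensional kernel becomes (up to constants) $\frac{1}{2\pi h}\sum_{\xi} e^{-\frac{it}{h^2}(2-2\cos\xi)}e^{i\xi x/h}$, i.e. a Riemann-type sum over a $\frac1h$-spaced grid in the truncated torus of the smooth function $\xi\mapsto e^{i\lambda(2-2\cos\xi)+i\mu\xi}$ with $\lambda = -t/h^2$ and $\mu = x/h$.

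The core is then a van der Corput estimate for $\int_{-\pi N}^{\pi N} e^{i(\lambda(2-2\cos\xi)+\mu\xi)}\,d\xi$: the phase has $\phi'(\xi) = 2\lambda\sin\xi + \mu$ and $\phi''(\xi) = 2\lambda\cos\xi$, $\phi'''(\xi) = -2\lambda\sin\xi$. On the truncated interval $|\xi|\le \pi N$ one has $|\phi''| + |\phi'''| \gtrsim |\lambda|\cdot N$ in a suitable sense after splitting near $\xi = 0$ (where $\phi''$ is large, of size $|\lambda|$) versus away from it; combined with monotonicity of the relevant derivative on dyadic subintervals, the second/third-derivative van der Corput lemma yields a gain of $(|\lambda| N)^{-1/3}$ uniformly in $\mu$. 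This produces exactly $\big(\frac{h^2}{|t|N}\big)^{1/3}\cdot\frac{N}{h} = \big(\frac{N}{h|t|}\big)^{1/3}$ after restoring the $\frac1h$ prefactor and the measure of the frequency region. The hypothesis $|t|\le \frac{ch}{N}$ is what guarantees $|\lambda| N = \frac{|t|N}{h^2} \le \frac{c}{h}$ stays in the regime where the continuum integral faithfully approximates the discrete sum — i.e. where the phase does not oscillate faster than the grid spacing can resolve, so that Poisson summation / a direct comparison of the sum to the integral (controlling the tail terms from other periods) is legitimate with $O(1)$ error.

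The main obstacle I anticipate is precisely this passage from the discrete Fourier sum to the oscillatory integral: unlike on $h\mathbb{Z}$ where one has the full Poisson summation formula, here the frequency sum is over the \emph{truncated} set $\max|k_j|\le \pi N/h$, so one must handle the truncation carefully — either by summation by parts against a smooth cutoff (paying attention that the cutoff's derivatives scale correctly in $N$) or by writing the truncated kernel as a difference and controlling boundary contributions. A secondary technical point is the van der Corput application near $\xi=0$: there $\phi''\sim |\lambda|$ is large but $\phi'''$ degenerates, while away from $0$ the roles partly reverse, so one needs a two- (or three-) region decomposition and must check the derivative being used is monotone on each dyadic piece to legitimately invoke the van der Corput lemma with a clean constant. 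Once the one-dimensional kernel bound is in hand, tensorization and Young's inequality finish the proof immediately, and the reduction of Theorem \ref{Strichartz} to this proposition is the standard $TT^*$ argument combined with the Littlewood--Paley square-function bound recorded after \eqref{eq:Hs} (summing the dyadic pieces costs the extra $\langle\nabla_h\rangle^\epsilon$).
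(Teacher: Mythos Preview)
Your approach matches the paper's: convolution-kernel reduction via Young, tensorization to $d=1$, van der Corput on the continuous oscillatory integral, then comparison of the discrete sum to that integral. For your anticipated ``main obstacle''---the sum-to-integral passage---the paper does not use Poisson summation or summation by parts against a cutoff, but rather Zygmund's classical lemma (Lemma~\ref{Lem:Riemannsum}, \cite[Ch.~V, Lemma~4.4]{Zygmund}): if $\varphi'$ is monotone on $(a,b)$ with $|\varphi'|<2\pi$, then $\bigl|\sum_{a<n\le b}e^{i\varphi(n)}-\int_a^b e^{i\varphi(\xi)}\,d\xi\bigr|\le A$ for an absolute constant $A$. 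The time restriction $|t|\le ch/N$ is precisely what forces $|\varphi'(\xi)|=\bigl|x-\tfrac{2t}{h}\sin h\xi\bigr|<2\pi$, and $\varphi'$ is monotone on $[-\pi N/h,\pi N/h]$ whenever $N\le\tfrac12$ (for $N=1$ one splits into three monotone subintervals). This bypasses the truncation issues you raise entirely. For the integral bound itself (Lemma~\ref{lem:VC}) the paper's decomposition is also slightly more direct than yours: for $N\le\tfrac14$ the second-derivative van der Corput lemma already gives $|I|\lesssim|t|^{-1/2}$, and interpolation with the trivial bound $N/h$ yields $(N/(h|t|))^{1/3}$; for $N\in\{\tfrac12,1\}$ the additional piece $|\xi|\in[\pi/(4h),\pi N/h]$ is handled by the third-derivative van der Corput lemma after rescaling $\xi\mapsto h\xi$, giving $(h|t|)^{-1/3}$ directly.
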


\begin{proof}[Proof of Theorem \ref{Strichartz}, assuming Proposition \ref{dispersion estimate}]
Applying the standard interpolation argument of Keel and Tao \cite{KT-98} with the dispersive estimate \eqref{Linftybound:Dyadic} but restricting this to the time interval $[0,\frac{ch}{N}]$, one can prove that 
$$\| e^{it \Delta_h } P_{\leq N}u_{h,0}\|_{L_t^q([0,\frac{ch}{N}];L^r_h)} \ls  \left(\frac{N}{h}\right)^{\frac 1q}\|u_{h,0} \|_{L_h^2}.$$
Hence, by changing the variables in time, $P_N=P_{\leq N}P_N$ and the unitarity of the Schr\"odinger flow, we obtain
$$\begin{aligned}
\| e^{it \Delta_h } P_Nu_{h,0}\|_{L_t^q([\frac{ch(n-1)}{N},\frac{chn}{N}];L^r_h)}&=\| e^{i(t+\frac{ch(n-1)}{N}) \Delta_h } P_Nu_{h,0}\|_{L_t^q([0,\frac{ch}{N}];L^r_h)}\\
&=\| e^{it \Delta_h } P_{\leq N}(P_N e^{i\frac{ch(n-1)}{N} \Delta_h } u_{h,0})\|_{L_t^q([0,\frac{ch}{N}];L^r_h)}\\
&\ls  \left(\frac{N}{h}\right)^{\frac 1q}\|P_N e^{i\frac{ch(n-1)}{N} \Delta_h } u_{h,0}\|_{L_h^2}\\
&= \left(\frac{N}{h}\right)^{\frac 1q}\|P_Nu_{h,0} \|_{L_h^2}.
\end{aligned}$$
Summing in the time interval, 
$$\begin{aligned}
\|e^{it \Delta_h } P_Nu_{h,0}\|_{L_t^q([0,1];L^r_h)}^{ q}&\leq\sum_{n=1}^{\lceil\frac{N}{ch}\rceil}\| e^{it \Delta_h } P_Nu_{h,0}\|_{L_t^q([\frac{ch(n-1)}{N},\frac{chn}{N}];L^r_h)}^{q}\\
&\lesssim\sum_{n=1}^{\lceil\frac{N}{ch}\rceil}\frac{N}{h}\|P_Nu_{h,0} \|_{L_h^2}^{q}=\left(\frac{N}{h}\right)^{2}\|P_Nu_{h,0} \|_{L_h^2}^{q}.
\end{aligned}$$
Then, summing in $N$, we obtain
$$\begin{aligned}
\| e^{it \Delta_h } u_{h,0}\|_{L_t^q([0,1];L^r_h)}&=\left\|\sum_{N=N_*}^1 e^{it \Delta_h } P_Nu_{h,0}\right\|_{L_t^q([0,1];L^r_h)}\\
&\leq\sum_{N=N_*}^1\| e^{it \Delta_h } P_Nu_{h,0}\|_{L_t^q([0,1];L^r_h)}\\
&\ls\sum_{N=N_*}^1  \left(\frac{N}{h}\right)^{\frac 2q}\|P_Nu_{h,0} \|_{L_h^2}.
\end{aligned}$$
Because $\mathcal{F}_h(P_Nu_{h,0})$ is localized in $|k|\sim\frac{N}{h}$, we conclude that
$$\| e^{it \Delta_h } u_{h,0}\|_{L_t^q([0,1];L^r_h)}\ls\sum_{N=N_*}^1  \left(\frac{N}{h}\right)^{-\epsilon}\|u_{h,0} \|_{H_h^{\frac{2}{q}+\epsilon}}\lesssim \left(\frac{N_*}{h}\right)^{-\epsilon}\|u_{h,0} \|_{H_h^{\frac{2}{q}+\epsilon}}\sim \|u_{h,0} \|_{H_h^{\frac{2}{q}+\epsilon}},$$
where in the last step, we used that $N_*\sim 2^{\log_2(\frac{h}{\pi})}=\frac{h}{\pi}$.
\end{proof}

%

Proposition \ref{dispersion estimate} remains to be proved, for which we need to estimate the sums of the oscillating functions. Following Vega's argument \cite{V-92}, we use Lemma \ref{Lem:Riemannsum} to approximate the sums by the oscillatory integrals. Then, we employ the estimate (Lemma \ref{lem:VC}) of the oscillatory integral.

\begin{lemma}[Zygmund $\textup{\cite[Chapter V, Lemma 4.4]{Zygmund}}$]\label{Lem:Riemannsum}
Let $\varphi$ be a real-valued function, and let $a, b \in \mathbb R$ with $a < b$. If $\varphi'$ is monotonic and $|\varphi'| < 2\pi$ on $(a,b)$, then
\[\left | \int_a^b e^{i \varphi(x)} dx - \sum_{a < n \le b} e^{i \varphi(n)} \right| \le A,\]
where the constant $A$ is independent of $\varphi$, $a$, and $b$. 
\end{lemma}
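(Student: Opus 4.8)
The plan is to prove Lemma~\ref{Lem:Riemannsum} by the classical Euler--Maclaurin device (equivalently, a truncated Poisson summation), which converts the difference between the Riemann sum and the integral into a sum of oscillatory integrals whose phases are never stationary. Let $\psi(x):=\{x\}-\tfrac12$ be the $1$-periodic sawtooth; it has mean zero and Fourier expansion $\psi(x)=-\tfrac1{2\pi i}\sum_{k\neq0}\tfrac1k e^{2\pi ikx}$ at non-integer points, with uniformly bounded partial sums. Writing $f=e^{i\varphi}$ and using $\lfloor x\rfloor=x-\psi(x)-\tfrac12$, a Riemann--Stieltjes integration by parts gives, up to an $O(1)$ error from the endpoints (one-sided values of $\psi$ at integer endpoints and the half-open summation convention),
\begin{equation*}
\sum_{a<n\le b}f(n)-\int_a^b f(x)\,dx=\psi(a)f(a)-\psi(b)f(b)+\int_a^b\psi(x)\,i\varphi'(x)e^{i\varphi(x)}\,dx .
\end{equation*}
Since $|\psi|\le\tfrac12$ and $|f|\equiv1$, the boundary terms are $O(1)$, and inserting the Fourier series of $\psi$ reduces the claim to
\begin{equation*}
\sum_{k\neq0}\frac1{|k|}\,|I_k|\ls1,\qquad I_k:=\int_a^b\varphi'(x)\,e^{i(\varphi(x)+2\pi kx)}\,dx .
\end{equation*}

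The point is that the phase $\Phi_k(x):=\varphi(x)+2\pi kx$ has derivative $\Phi_k'=\varphi'+2\pi k$, and here the hypothesis $|\varphi'|<2\pi$ is decisive: it forces $\Phi_k'$ to keep a fixed sign on $(a,b)$ for every $k\neq0$, and for $|k|\ge2$ it even yields $|\Phi_k'|\ge2\pi(|k|-1)\gtrsim|k|$. For $|k|\ge2$ I would integrate by parts once, writing $\varphi'e^{i\Phi_k}=\tfrac1i\bigl(\tfrac{\varphi'}{\varphi'+2\pi k}\bigr)\tfrac{d}{dx}\bigl(e^{i\Phi_k}\bigr)$; since $\varphi'$ is monotone, $x\mapsto\tfrac{\varphi'(x)}{\varphi'(x)+2\pi k}$ is a monotone function of $\varphi'(x)$, hence monotone in $x$, so both its sup-norm and its total variation on $(a,b)$ are $O(1/|k|)$. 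This gives $|I_k|\ls1/|k|$ and $\sum_{|k|\ge2}\tfrac1{|k|}|I_k|\ls\sum_{|k|\ge2}k^{-2}<\infty$.

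The terms $k=\pm1$ are the delicate ones, because $\Phi_{\pm1}'=\varphi'\pm2\pi$, although sign-definite and monotone by $|\varphi'|<2\pi$, need not be bounded away from zero, so the previous integration by parts breaks down. For these I would isolate the exact part by substituting $\varphi'=\Phi_{\pm1}'\mp2\pi$,
\begin{equation*}
I_{\pm1}=\tfrac1i\bigl[e^{i\Phi_{\pm1}(x)}\bigr]_{x=a}^{x=b}\mp2\pi\int_a^b e^{i\Phi_{\pm1}(x)}\,dx ,
\end{equation*}
so that the bracket is $O(1)$ and the remaining genuine oscillatory integral is controlled by a van der Corput--type first-derivative estimate (cf.\ Lemma~\ref{lem:VC}), using that $\Phi_{\pm1}'$ is monotone and sign-definite on $(a,b)$. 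Collecting this with the previous paragraph and the $O(1)$ boundary contributions finishes the proof.

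The $k=\pm1$ contributions are the main obstacle: these are the near-resonant frequencies at which the aliased phase $\varphi+2\pi kx$ may be almost stationary, so the bound must come from the monotonicity of $\varphi'$ alone, with no second-derivative lower bound to lean on. This is exactly where the strict inequality $|\varphi'|<2\pi$ enters, and where one has to be careful that the resulting constant is genuinely uniform in $\varphi$, $a$, and $b$.
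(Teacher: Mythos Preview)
The paper does not supply a proof of this lemma; it is simply quoted from Zygmund, so there is no in-paper argument to compare against. Your Euler--Maclaurin/Poisson-summation reduction is the classical route to results of this type, and the treatment of the modes $|k|\ge2$ is correct: monotonicity of $\varphi'$ makes $\varphi'/(\varphi'+2\pi k)$ monotone with sup-norm and total variation $O(1/|k|)$, whence $|I_k|\lesssim 1/|k|$ and the tail sums.

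The gap is precisely where you flag it, at $k=\pm1$. The first-derivative van der Corput estimate says that if $\Phi'$ is monotone and $|\Phi'|\ge\lambda>0$ then $\bigl|\int e^{i\Phi}\bigr|\le C/\lambda$; monotonicity and sign-definiteness alone do \emph{not} give a uniform bound. Under the bare hypothesis $|\varphi'|<2\pi$, the quantity $|\Phi_{\pm1}'|=|\varphi'\pm2\pi|$ may be arbitrarily small, so no uniform $\lambda$ is available. (Your cross-reference to Lemma~\ref{lem:VC} does not help either: that lemma is a specific estimate obtained from second- and third-derivative bounds on a particular phase, not a general first-derivative principle.) This is not a mere technicality. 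Take $\varphi(x)=(2\pi-\epsilon)x$ on $(0,N]$ with $\epsilon=\pi/N$: then $\varphi'$ is constant (hence monotone) with $|\varphi'|<2\pi$, the integral $\int_0^N e^{i\varphi}$ is $O(1)$, but $\sum_{n=1}^N e^{i\varphi(n)}=\sum_{n=1}^N e^{-i\epsilon n}$ has magnitude $\sim N$, so the difference is unbounded. Thus the statement, read literally with $A$ independent of $\varphi$, cannot hold; the standard formulation requires a quantitative gap, e.g.\ $|\varphi'|\le 2\pi-\delta$ with $A=A_\delta$ (equivalently $|f'|\le\tfrac12$ when one writes $e^{2\pi i f}$). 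In the paper's actual application (proof of Proposition~\ref{dispersion estimate}) one has $|\varphi'(\xi)|\le\pi+2\pi c$ for a fixed small $c$, so there \emph{is} a uniform gap $\delta=\pi(1-2c)>0$, and with that extra input your $k=\pm1$ integrals are bounded by $C/\delta$ via the first-derivative lemma and the proof closes. In short: the plan is right, but to finish you must assume and then use a uniform lower bound on $|\Phi_{\pm1}'|$, which the stated hypothesis by itself does not provide.
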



\begin{lemma}\label{lem:VC}
Let $h \in (0,1]$ and a dyadic number $N$ with $N_*\le N \le 1$ be given. We define 
$$I_{N,h,t,x}:=\int_{-\frac{\pi N}{h}}^{\frac{\pi N}{h}} e^{i(x\xi-\frac{2t}{h^2}\left(1-\cos h\xi\right))} d\xi.$$
Then, there exists $B>0$, independent of $h$ and $N$, such that 
\[|I_{N,h,t,x}| \le B \left(\frac{N}{h|t|}\right)^\frac13.\]
\end{lemma}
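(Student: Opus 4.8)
The plan is to prove the one-dimensional oscillatory integral bound $|I_{N,h,t,x}|\ls (N/(h|t|))^{1/3}$ by a van der Corput argument applied to the phase $\varphi(\xi)=x\xi-\frac{2t}{h^2}(1-\cos h\xi)$ on the interval $\xi\in[-\tfrac{\pi N}{h},\tfrac{\pi N}{h}]$. The natural object to control is the second derivative, $\varphi''(\xi)=-2t\cos h\xi$, and the third derivative, $\varphi'''(\xi)=2th\sin h\xi$. The key point is that on the relevant range $|h\xi|\le\pi N\le\pi$, one cannot get a uniform lower bound on $|\varphi''|$ (it vanishes where $\cos h\xi=0$, which can happen when $N$ is close to $1$), so a pure second-derivative van der Corput estimate giving $|t|^{-1/2}$ is not available; instead one must combine a second-derivative estimate on the region where $|\cos h\xi|$ is not too small with a third-derivative (van der Corput with $k=3$) estimate near the zeros of $\cos h\xi$. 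This is exactly why the exponent is $\frac13$ rather than $\frac12$.

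First I would record the van der Corput lemma in both forms: if $|\varphi''|\ge\mu$ on an interval $J$ with $\varphi'$ monotone, then $|\int_J e^{i\varphi}|\ls \mu^{-1/2}$; and if $|\varphi'''|\ge\nu$ on $J$, then $|\int_J e^{i\varphi}|\ls \nu^{-1/3}$, with constants independent of the interval and of the linear term $x\xi$ (which does not affect $\varphi''$ or $\varphi'''$). Next I would split $[-\tfrac{\pi N}{h},\tfrac{\pi N}{h}]$ according to the size of $|\cos h\xi|$: on the set $\{|\cos h\xi|\ge\delta\}$ we have $|\varphi''|\ge 2|t|\delta$, so that piece contributes $\ls (|t|\delta)^{-1/2}$; on the complementary set $\{|\cos h\xi|<\delta\}$, which is a union of $O(N)$ intervals in $h\xi$ each of length $O(\delta/h)$ around the points where $h\xi$ is an odd multiple of $\pi/2$, we have $|\sin h\xi|\ge \tfrac12$ (for $\delta$ small) hence $|\varphi'''|\ge |t|h$, so each such subinterval contributes $\ls (|t|h)^{-1/3}$, and there are at most $O(N)$ of them, giving a total of $\ls N(|t|h)^{-1/3}$. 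Here one must be a little careful: the length of each "bad" subinterval in the $\xi$-variable is $O(\delta/h)$, which is larger than $1$ when $\delta>h$; the van der Corput $k=3$ bound does not see the interval length, so the count $O(N)$ of bad intervals (one near each zero of $\cos$ in the range, of which there are $O(N/h\cdot h)=O(N)$ — more precisely $O(N)$ since $h\xi$ ranges over an interval of length $2\pi N\le 2\pi$) is what matters. Combining, $|I_{N,h,t,x}|\ls (|t|\delta)^{-1/2}+N(|t|h)^{-1/3}$, and optimizing over $\delta\in(0,c]$ — one expects to take $\delta$ comparable to a power that balances the two terms, but since we only need an upper bound it suffices to choose $\delta$ of the right order; a clean choice is $\delta\sim h^{2/3}/( |t|^{1/3}N^2)$ adjusted to stay $\le c$, or more simply to note that when $|t|$ is in the allowed range $|t|\le ch/N$ one can afford $\delta$ of constant size on the part where that is consistent — yields $|I_{N,h,t,x}|\ls N(|t|h)^{-1/3}$, which is the claimed bound since $N(|t|h)^{-1/3}=(N/( |t| h))^{1/3}\cdot N^{2/3}$...

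Let me restate the balancing more carefully, since the target is $(N/(h|t|))^{1/3}$ with \emph{no} extra power of $N$. The point is that on each bad subinterval the van der Corput $k=3$ estimate actually improves: if the subinterval has length $L\le \nu^{-1/3}$ then trivially $|\int|\le L$, and if $L\ge\nu^{-1/3}$ the bound is $\ls\nu^{-1/3}$; since each bad subinterval has $\xi$-length $O(\delta/h)$ and $\nu=|t|h$, we get per-interval bound $\ls\min(\delta/h,(|t|h)^{-1/3})$. With $O(N/h\cdot h)=O(\max(N\cdot\text{(number per unit)}))$ — here I should count zeros of $\cos h\xi$ in $h\xi\in[-\pi N,\pi N]$, giving $O(1+N)=O(1)$ such points when $N\le 1$. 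So there are only $O(1)$ bad subintervals, not $O(N)$! That removes the spurious $N$: the bad part contributes $\ls (|t|h)^{-1/3}$, and the good part contributes $\ls(|t|\delta)^{-1/2}$; taking $\delta$ so that $(|t|\delta)^{-1/2}\sim(N/(h|t|))^{1/3}$ — i.e. $\delta\sim |t|^{-1/3}(h/N)^{2/3}$, which one checks is $\le c$ when $|t|\ge$ something, and otherwise the whole interval has small measure — gives the result. The main obstacle, then, is bookkeeping the interaction between the number and lengths of the "bad" arcs where $\cos h\xi$ is small and the range constraint $|t|\le ch/N$: one has to verify that the chosen cutoff $\delta$ is admissible (i.e. $\le c$) and that when it is not, the trivial bound $|I|\le 2\pi N/h$ together with $|t|\le ch/N$ already gives $(N/(h|t|))^{1/3}\gs 1\gs$ the trivial normalized bound. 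I expect the cleanest writeup to treat two cases, $|t|\ge$ (threshold) and $|t|<$ (threshold), using van der Corput in the first and the trivial estimate in the second.
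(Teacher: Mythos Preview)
Your approach is correct and uses the same ingredients as the paper---the van der Corput lemma for the second and third derivatives of $\varphi(\xi)=x\xi-\tfrac{2t}{h^2}(1-\cos h\xi)$, together with the trivial bound $|I|\le 2\pi N/h$. After your self-correction (there are at most two zeros of $\cos h\xi$ in $h\xi\in[-\pi N,\pi N]\subset[-\pi,\pi]$, hence $O(1)$ bad arcs), the skeleton is sound. The paper, however, organizes the same argument more cleanly by splitting on $N$ rather than on $|\cos h\xi|$: for $N\le\tfrac14$ one has $|\cos h\xi|\ge 1/\sqrt2$ on the entire interval, so the $k=2$ estimate alone gives $|I|\ls|t|^{-1/2}$, and then $\min(|t|^{-1/2},\,N/h)\le(|t|^{-1/2})^{2/3}(N/h)^{1/3}=(N/(h|t|))^{1/3}$; for $N\in\{\tfrac12,1\}$ the paper peels off $I_{1/4}$ and rescales the remainder to $h^{-1}\int_{\pi/4\le|\xi|\le\pi N}$, on which either $|\cos\xi|$ or $|\sin\xi|$ is bounded below on $O(1)$ subintervals, yielding $\ls(h|t|)^{-1/3}\sim(N/(h|t|))^{1/3}$ directly. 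This bypasses your $\delta$-optimization entirely: in your language, the bad region is \emph{empty} when $N\le\tfrac14$, and when $N\ge\tfrac12$ the target already equals $(h|t|)^{-1/3}$ up to constants, so a fixed $\delta$ (say $\tfrac12$) suffices and no balancing between the two pieces is needed.

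One point to correct: the restriction $|t|\le ch/N$ is \emph{not} a hypothesis of this lemma---it enters only in Proposition~\ref{dispersion estimate}, where Lemma~\ref{Lem:Riemannsum} requires $|\varphi'|<2\pi$. The bound here must hold for all $t\ne 0$. Your final plan (van der Corput above a $|t|$-threshold, trivial bound below) does deliver this without that restriction, so simply drop the appeal to $|t|\le ch/N$.
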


\begin{proof}
If $N_*\le N \le \frac{1}{4}$, by the van der Corput lemma with $|(x\xi-\tfrac{2t}{h^2}(1-\cos h\xi))''|=2|t||\cos h\xi|\geq|t|$ for $|\xi|\leq\frac{\pi N}{h}$, we have $|I_{N,h,t,x}| \lesssim|t|^{-1/2}$. Hence, interpolating with the trivial bound $|I_{N,h,t,x}| \lesssim\frac{N}{h}$, we obtain the desired bound.

If $N=\frac{1}{2}$ or $1$, then we decompose 
$$I_{N,h,t,x}=I_{\frac{1}{4}, h, t,x}+\int_{\frac{\pi}{4h}\leq |\xi|\leq\frac{\pi N}{h}} e^{i ( x\xi-\frac{2t}{h^2}\left(1-\cos h\xi\right))} d\xi.$$
It has already been shown that $|I_{\frac{1}{4}, h, t,x}|\lesssim (h|t|)^{-1/3}$. For the integral on the right-hand side, we change the variables,
$$\int_{\frac{\pi}{4h}\leq |\xi|\leq\frac{\pi N}{h}} e^{i ( x\xi-\frac{2t}{h^2} \left(1-\cos h\xi\right))} d\xi=h^{-1}\int_{\frac{\pi}{4}\leq |\xi|\leq\pi N} e^{i ( \frac{x}{h}\xi-\frac{2t}{h^2} \left(1-\cos\xi\right))} d\xi.$$
We observe that 
$|( \frac{x}{h}\xi-\frac{2t}{h^2}\left(1-\cos\xi\right))''|=\frac{2|t|}{h^2}|\cos\xi|$ and  $|( \frac{x}{h}\xi-\frac{2t}{h^2} \left(1-\cos\xi\right))'''|=\frac{2|t|}{h^2}|\sin\xi|$. Thus, applying the van der Corput lemma again, we prove that 
$$\left|\int_{\frac{\pi}{4h}\leq |\xi|\leq\frac{\pi N}{h}} e^{i ( x\xi-\frac{2t}{h^2} \left(1-\cos h\xi \right))} d\xi\right|\lesssim h^{-1}\frac{1}{(|t|/h^2)^{1/3}}=\frac{1}{(h|t|)^{1/3}}.$$
Therefore, we complete the proof of the lemma.
\end{proof}


\begin{proof}[Proof of Proposition \ref{dispersion estimate}]
We consider the case $N=N_*$. By the definition of $P_{N_*}$ (see \eqref{LP projection}) and the Plancherel theorem, we have
$$e^{it \Delta_h } P_{N_*}u_{h,0}(x) =\frac{1}{(2\pi)^d}(\mathcal{F}_h u_{h,0})(0)\lesssim\left\{\frac{1}{(2\pi)^d}\sum_{k}|\mathcal{F}_h u_{h,0}(k)|^2\right\}^{\frac12}=\|u_{h,0}\|_{L_h^2},$$
which implies $\| e^{it \Delta_h } P_{N_*}u_{h,0} \|_{L^\infty_h} \lesssim \|u_{h,0}\|_{L_h^2}$.
Hence, interpolating it with a trivial estimate $\| e^{it \Delta_h } P_{N_*}u_{h,0} \|_{L^2_h} \leq \| u_{h,0} \|_{L_h^2}$, we 
get the bound $\|e^{it \Delta_h } P_{N_*} u_{h,0}\|_{L_h^r} \lesssim \|u_{h,0}\|_{L_h^2}$ for all $r\geq 2$. As a consequence, we obtain $\|e^{it \Delta_h } P_{N_*}u_{h,0}\|_{L_t^q([0,1];L_h^r)}\leq \|e^{it \Delta_h } P_{N_*} u_{h,0}\|_{L_t^\infty([0,1];L_h^r)} \lesssim \|P_{N_*}u_{h,0}\|_{L_h^2}$ for any $1 \le q \le \infty$ and $2 \le r \le \infty$.

Suppose that $2N_*\leq N\leq 1$. A direct calculation with Lemma \ref{Laplacian as a multiplier} yields
\[\begin{aligned}
e^{it \Delta_h } P_{\leq N}u_{h,0} (x)&= \frac{1}{(2\pi)^d}\sum_{\max |k_j|\le \frac{\pi N}{h}}  e^{i( k\cdot x - \sum_{j=1}^d\frac{2t}{h^2}\left(1-\cos hk_j \right))} \mathcal F_h( P_Nu_{h,0})(k) \\
&=\frac{h^d}{(2\pi)^d} \sum_{x'}  P_Nu_{h,0}(x')\prod_{j=1}^{d} \sum_{|k_j|\le \frac{\pi N}{h}} e^{i( (x_j-x'_j) k_j - \frac{2t}{h^2}\left(1-\cos hk_j\right))} \\
&= (K_{N,t} \ast  P_Nu_{h,0})(x),
\end{aligned}\]
where
$$K_{N,t}(x)= \frac{1}{(2\pi)^d}\prod_{j=1}^{d} \sum_{|k_j|\le \frac{\pi N}{h}} e^{i( x_jk_j - \frac{2t}{h^2}\left(1-\cos hk_j\right))}$$
and $\ast$ is the convolution on the lattice defined in \eqref{eq:convolution}. Hence, Young's inequality ensures \eqref{Linftybound:Dyadic} provided the following one-dimensional inequality holds true: 
\begin{equation}\label{eq:1d}
\sup_{x\in\mathbb{T}_h}\left|  \sum_{|k|\le \frac{\pi N}{h}} e^{i \left(k x - \frac{2t}{h^2}\left(1-\cos hk\right)\right)} \right| \ls \left(\frac{N}{h|t|}\right)^\frac13.
\end{equation}


It remains to prove \eqref{eq:1d}. For notational convenience, we write
$$\varphi(\xi) :=x\xi-\frac{2t}{h^2}\left(1-\cos h\xi \right)$$
for $x \in \mathbb T_h$ and $\xi\in\mathbb{R}$ with $|\xi|\le \tfrac{\pi N}{h}$. A direct calculation yields
\[|\varphi'(\xi)| =\left| x - \frac{2t}{h}\sin h\xi\right| < 2\pi\]
under the restriction $|t|\le \frac{ch}{N}$.

First, we consider the case $N\le\frac12$.
Then $\varphi'$ is decreasing on $[-\frac{\pi N}{h},\frac{\pi N}{h}]$.
Hence, applying Lemma \ref{Lem:Riemannsum} and \ref{lem:VC}, we obtain
\[\begin{aligned}
\left| \sum_{|k|\le \frac{\pi N}{h}} e^{i\varphi(k)} \right| \le&~{} \left| \sum_{|k|\le \frac{\pi N}{h}} e^{i\varphi(k)} - \int_{-\frac{\pi N}{h} }^{\frac{\pi N}{h} } e^{ i\varphi(\xi)} \; d\xi \right|  + \left| \int_{-\frac{\pi N}{h} }^{\frac{\pi N}{h} } e^{ i\varphi(\xi) }\; d\xi \right| \\
\le&~{} A+ B \left(\frac{N}{h|t|}\right)^\frac13 \lesssim  \left(\frac{N}{h|t|}\right)^\frac13.
\end{aligned}\]
In the last inequality, we used $|t|\le \frac{ch}{N}$ and $N\geq N_*\sim\frac{h}{\pi}$, implying $1\lesssim(\frac{h}{N|t|})^{1/3}\lesssim(\frac{N}{h|t|})^{1/3}$.
Next, we consider the case $N=1$. We divide the interval into three parts:
$$ [ -\tfrac{\pi}{h},\tfrac{\pi}{h} ) = [-\tfrac{\pi}{h},-\tfrac{\pi}{2h}] \cup [-\tfrac{\pi}{2h},\tfrac{\pi}{2h}] \cup [\tfrac{\pi}{2h},\tfrac{\pi}{h})=:I_1\cup I_2\cup I_3,$$
where $\varphi'$ is monotonic on each $I_j$.
Then, we decompose
\begin{align*}
\sum_{|k|\le \frac{\pi }{h}} e^{i\varphi(k)}=\sum_{k\in I_1} e^{i\varphi(k)}+\sum_{k\in I_2} e^{i\varphi(k)}+\sum_{k\in I_3} e^{i\varphi(k)} =: S_1+S_2+S_3,
\end{align*}
Each $S_j$ can be estimated with the same method as above. Summing these, we complete the proof.
\end{proof}

As an application of Theorem \ref{Strichartz}, we obtain the time-averaged uniform $L_h^{\infty}$ estimates.
\begin{corollary}[Uniform time-averaged $L_h^\infty$-bounds for the discrete linear Schr\"odinger flow; 2D case]\label{linear L^infty bound}
Suppose that $d=2$ and $1 \le q < \infty$.
Then, 
\begin{equation}\label{eq:Lq}
\|e^{it\Delta_h}u_{h,0}\|_{L_t^{q}([0,1];L_h^\infty)}\lesssim \|u_{h,0}\|_{H_h^1}.
\end{equation}
\end{corollary}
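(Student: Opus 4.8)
The plan is to derive the estimate from Theorem~\ref{Strichartz} and a Littlewood--Paley decomposition, splitting the range of $q$ at $q=3$.

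When $1\le q\le 3$, since $[0,1]$ has length one, Hölder's inequality in time gives $\|e^{it\Delta_h}u_{h,0}\|_{L_t^q([0,1];L_h^\infty)}\le\|e^{it\Delta_h}u_{h,0}\|_{L_t^3([0,1];L_h^\infty)}$, so it is enough to treat $q=3$. In dimension $d=2$ the pair $(q,r)=(3,\infty)$ is lattice-admissible: it obeys $\frac3q+\frac dr=\frac d2$ and is not the excluded triple $(2,\infty,3)$. Hence Theorem~\ref{Strichartz}, applied with some $\epsilon<\frac13$, gives $\|e^{it\Delta_h}u_{h,0}\|_{L_t^3([0,1];L_h^\infty)}\lesssim\|\langle\nabla_h\rangle^{\frac23+\epsilon}u_{h,0}\|_{L_h^2}\le\|u_{h,0}\|_{H_h^1}$, since $\langle k\rangle^{\frac23+\epsilon}\le\langle k\rangle$ on $(\mathbb{T}_h^d)^*$.

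When $3<q<\infty$, I would work scale by scale. For a single Littlewood--Paley piece $P_Nu_{h,0}$ with $N_*\le N\le1$, the Fourier support has $\lesssim(N/h)^2$ points, so Plancherel's theorem yields the Bernstein bound $\|P_Nf\|_{L_h^\infty}\lesssim(N/h)\|P_Nf\|_{L_h^2}$ (here $d=2$); combined with unitarity of $e^{it\Delta_h}$ on $L_h^2$ this gives $\|e^{it\Delta_h}P_Nu_{h,0}\|_{L_t^\infty([0,1];L_h^\infty)}\lesssim(N/h)\|P_Nu_{h,0}\|_{L_h^2}$. On the other hand, Theorem~\ref{Strichartz} applied to $P_Nu_{h,0}$ at the pair $(3,\infty)$ gives $\|e^{it\Delta_h}P_Nu_{h,0}\|_{L_t^3([0,1];L_h^\infty)}\lesssim(N/h)^{\frac23+\epsilon}\|P_Nu_{h,0}\|_{L_h^2}$, using $\langle k\rangle\sim N/h$ on the Fourier support. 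Interpolating these two in time (Hölder in $t$ with weight $\theta=\frac3q$, at fixed spatial $L_h^\infty$) produces
\[
\|e^{it\Delta_h}P_Nu_{h,0}\|_{L_t^q([0,1];L_h^\infty)}\lesssim\Big(\tfrac Nh\Big)^{1-\frac1q+\frac{3\epsilon}q}\|P_Nu_{h,0}\|_{L_h^2}=\Big(\tfrac Nh\Big)^{1-\delta}\|P_Nu_{h,0}\|_{L_h^2},
\]
where $\delta:=\frac{1-3\epsilon}q>0$ once $\epsilon<\frac13$. Summing the triangle inequality in $N$ and applying Cauchy--Schwarz in the dyadic sum together with $\sum_N(N/h)^2\|P_Nu_{h,0}\|_{L_h^2}^2\sim\|u_{h,0}\|_{H_h^1}^2$ (valid because $N/h\gtrsim1$, so $\langle N/h\rangle\sim N/h$) bounds the left side by $\big(\sum_N(N/h)^{-2\delta}\big)^{1/2}\|u_{h,0}\|_{H_h^1}$.

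The crucial point, and the only place the hypothesis $q<\infty$ is used, is that the remaining geometric series converges uniformly in $h$: the quantities $N/h$ range over dyadic values $\gtrsim1$ (up to $\sim1/h$), and $\delta>0$, so $\sum_N(N/h)^{-2\delta}\lesssim\sum_{j\ge0}2^{-2\delta j}<\infty$ with a constant independent of $h$. For $q=\infty$ one would have $\delta=0$ and this sum would be comparable to $\log(1/h)$, which diverges, so the restriction to finite $q$ is sharp for this method. Apart from this, the argument is routine; the only small items to verify are the lattice Bernstein inequality and the square-function characterization of $\|\cdot\|_{H_h^1}$, both of which are immediate from Plancherel's theorem and the definitions in Section~\ref{sec: Littlewood-Paley}.
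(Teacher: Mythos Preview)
Your argument is correct. The case $1\le q\le 3$ matches the paper exactly. For $q>3$ the paper proceeds slightly differently: instead of decomposing in Littlewood--Paley pieces, it invokes the Sobolev inequality (Lemma~\ref{Sobolev inequality}) to get the global bound $\|e^{it\Delta_h}u_{h,0}\|_{L_t^\infty([0,1];L_h^\infty)}\lesssim\|u_{h,0}\|_{H_h^{1+\epsilon}}$, and then interpolates this operator bound against the Strichartz estimate at $(3,\infty)$ between Sobolev spaces, landing at $H_h^{1-\frac1q+\epsilon}\subset H_h^1$ once $\epsilon<\tfrac1q$. Your route unpacks that same computation at the level of dyadic blocks: the Bernstein estimate $\|P_Nf\|_{L_h^\infty}\lesssim(N/h)\|P_Nf\|_{L_h^2}$ is precisely the scale-$N$ content of the Sobolev embedding, and your H\"older-in-$t$ interpolation plus Cauchy--Schwarz in $N$ reproduces the operator interpolation by hand. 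The advantage of your version is that it uses only H\"older's inequality in $t$ rather than an interpolation theorem between different $H_h^s$ spaces; the paper's version is shorter because the Littlewood--Paley summation is already packaged into Lemma~\ref{Sobolev inequality}.
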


\begin{proof}
Let $\epsilon>0$ be a sufficiently small number such that the following inequalities hold. 

For $1 \le q \le 3$, H\"older's inequality in time and Theorem \ref{Strichartz} yield
\[\|e^{it\Delta_h}u_{h,0}\|_{L_t^{q}([0,1]; L_h^{\infty})} \leq \|e^{it\Delta_h}u_{h,0}\|_{L_t^3([0,1];L_h^{\infty})} \lesssim\|u_{h,0}\|_{H_h^{\frac{2}{3}+\epsilon}}.\]
Suppose that $q > 3$. By the Sobolev inequality (Lemma \ref{Sobolev inequality}) and the unitarity of the Schr\"odinger flow, we get
\[\|e^{it\Delta_h}u_{h,0}\|_{L_t^{\infty}([0,1];L_h^{\infty})} \lesssim\|e^{it\Delta_h}u_{h,0}\|_{L_t^{\infty}([0,1];H_h^{1+\epsilon})}= \|u_{h,0}\|_{H_h^{1+\ep}},\]
for a small $\epsilon = \epsilon(q)>0$ appeared in Theorem \ref{Strichartz}. Thus, interpolating this inequality and Theorem \ref{Strichartz} with $(q,r,d) = (3,\infty,2)$ and choosing $\epsilon < \frac1q$, we obtain
\[\|e^{it\Delta_h}u_{h,0}\|_{L_t^{q}([0,1];L_h^{\infty})} \lesssim \|u_{h,0}\|_{H_h^{1-\frac1q+\ep}}\leq \|u_{h,0}\|_{H_h^1},\]
which completes the proof. 
\end{proof}

\section{Uniform bound for discrete NLS}\label{sec:GWP}

In this section, we provide a simple well-posedness theorem for DNLS \eqref{DNLS}. Then, as an application of the uniform-in-$h$ Strichartz estimates, we deduce a uniform time-averaged $L_h^\infty$-bound for nonlinear solutions.

\subsection{Global well-posedness}

By Duhamel's principle, DNLS \eqref{DNLS} is equivalent to the integral equation
\begin{equation}\label{eq:Duhamel}
u_h(t)=e^{it\Delta_h}u_{h,0}-i\lambda \int_0^t e^{i(t-s)\Delta_h}(|u_h|^{p-1}u_h)(s) \; ds.
\end{equation}
We next show its global well-posedness.

\begin{proposition}[Global well-posedness]\label{GWP}
Let $d \ge 1$, $h > 0$ and $p>1$. Then, for any initial data $u_{h,0}\in L_h^2$, there exists a unique global solution $u_h(t)\in C(\mathbb{R}; L_h^2)$ to DNLS \eqref{DNLS}. Moreover, it conserves the mass
\begin{equation}\label{eq:M}
M_h(u_h):=\|u_h\|_{L_h^2}^2
\end{equation}
and the energy
\begin{equation}\label{eq:E}
E_h(u_h):=\frac{1}{2}\|\sqrt{-\Delta_h}u_h\|_{L_h^2}^2+\frac{\lambda}{p+1}\|u_h\|_{L_h^{p+1}}^{p+1}
\end{equation}
\end{proposition}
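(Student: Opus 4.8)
The plan is to prove global well-posedness in two stages: first local well-posedness by a contraction mapping argument in $C([-T,T]; L_h^2)$, and then globalization via the conserved mass. Since $\mathbb{T}_h^d$ is a finite set of $(2M)^d$ points, the space $L_h^2$ is finite-dimensional, and the nonlinearity $v \mapsto |v|^{p-1}v$ is locally Lipschitz on $L_h^2$ (it is continuous, and on bounded sets it is Lipschitz because $L_h^\infty$-norms are comparable to $L_h^2$-norms up to a constant $C_h = h^{-d/2}$, which is allowed to depend on $h$ here). The linear propagator $e^{it\Delta_h}$ is unitary on $L_h^2$ by Plancherel, since $-\Delta_h$ is a bounded self-adjoint Fourier multiplier (Lemma \ref{Laplacian as a multiplier}). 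Therefore the Duhamel map
\[
\Phi(u_h)(t) := e^{it\Delta_h}u_{h,0} - i\lambda \int_0^t e^{i(t-s)\Delta_h}(|u_h|^{p-1}u_h)(s)\,ds
\]
is a contraction on a closed ball of $C([-T,T]; L_h^2)$ for $T = T(h, \|u_{h,0}\|_{L_h^2})$ small enough, by the standard estimate $\|\Phi(u_h) - \Phi(v_h)\|_{C_t L_h^2} \lesssim T \, C(R, h) \|u_h - v_h\|_{C_t L_h^2}$ on the ball of radius $R = 2\|u_{h,0}\|_{L_h^2}$. This gives a unique local solution; uniqueness in the full space follows by a Gr\"onwall argument.

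The next step is to verify the two conservation laws. Differentiating $M_h(u_h(t)) = \langle u_h(t), u_h(t)\rangle_{L_h^2}$ in time and using the equation, one gets $\frac{d}{dt}M_h(u_h) = 2\,\mathrm{Re}\,\langle \partial_t u_h, u_h\rangle = 2\,\mathrm{Re}\,\langle i\Delta_h u_h - i\lambda|u_h|^{p-1}u_h, u_h\rangle$; since $\Delta_h$ is self-adjoint on $L_h^2$ (summation by parts on the finite lattice) and $\langle i\lambda|u_h|^{p-1}u_h, u_h\rangle = i\lambda\|u_h\|_{L_h^{p+1}}^{p+1}$ is purely imaginary, the real part vanishes and mass is conserved. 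Similarly, for the energy $E_h$ defined in \eqref{eq:E}, one computes $\frac{d}{dt}E_h(u_h) = \mathrm{Re}\,\langle -\Delta_h u_h + \lambda|u_h|^{p-1}u_h, \partial_t u_h\rangle$; substituting $\partial_t u_h = i\Delta_h u_h - i\lambda|u_h|^{p-1}u_h$ and using again self-adjointness of $\Delta_h$, the bracket becomes $\mathrm{Re}\big(i\|\Delta_h u_h - \lambda|u_h|^{p-1}u_h\|$-type expression$\big)$, which is the real part of $i$ times a real quantity, hence zero. (All these manipulations are rigorous since $t \mapsto u_h(t)$ is $C^1$ into the finite-dimensional space $L_h^2$, the solution being a classical solution of the ODE system.)

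Finally, globalization: since mass is conserved, $\|u_h(t)\|_{L_h^2} = \|u_{h,0}\|_{L_h^2}$ for all $t$ in the maximal interval of existence; because the local existence time $T$ in the contraction argument depends only on $h$ and on $\|u_{h,0}\|_{L_h^2}$ (which stays bounded), a standard continuation argument shows the solution cannot blow up in finite time, hence $u_h \in C(\mathbb{R}; L_h^2)$. There is essentially no serious obstacle here: the entire proposition is a finite-dimensional ODE statement, and the only mild care needed is that the local-in-time constants are permitted to depend on $h$ (the uniform-in-$h$ theory is the subject of the next subsection, not of this proposition). The step requiring the most attention is simply bookkeeping the conservation-law computations so that the self-adjointness of $\Delta_h$ and the cancellation of the nonlinear terms are made explicit.
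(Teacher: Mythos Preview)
Your proposal is correct and follows essentially the same route as the paper: a contraction mapping argument in $C_t([-T,T];L_h^2)$ using unitarity of $e^{it\Delta_h}$ together with the trivial bound $\|u_h\|_{L_h^\infty}\le h^{-d/2}\|u_h\|_{L_h^2}$, conservation laws by direct differentiation, and globalization via mass conservation. The paper is terser on the conservation-law computations (simply asserting they follow by ``direct calculations''), but otherwise the arguments match.
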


\begin{proof}
The proof is identical to the analogous theorem for the discrete NLS on $h\Z^d$ (see \cite[Proposition 6.1]{HY1}). Fix $h>0$. For a small $T>0$ to be chosen later, let $X_T:=C_t([-T,T];L_h^2)$. We denote by $\Gamma(u_h)$ the right-hand side of \eqref{eq:Duhamel}. Then, by the unitarity of the linear propagator and the trivial inequality $\|u_h\|_{L_h^\infty}\leq h^{-d/2}\|u_h\|_{L_h^2}$, one can show that 
$$\begin{aligned}
\|\Gamma(u_h)\|_{X_T}&\leq \|u_{h,0}\|_{L^2}+\||u_h|^{p-1}u_h\|_{L_t^1([-T,T];L_h^2)}\\
&\leq \|u_{h,0}\|_{L^2}+T\|u_h\|_{C_t([-T,T];L_h^{\infty})}^{p-1}\|u_h\|_{C_t([-T,T];L_h^2)}\\
& \leq \|u_{h,0}\|_{L^2}+Th^{-\frac{d(p-1)}{2}}\|u_h\|_{X_T}^p
\end{aligned}$$
and in the same way, 
$$\|\Gamma(u_h)-\Gamma(v_h)\|_{X_T}\lesssim Th^{-\frac{d(p-1)}{2}}\left\{\|u_h\|_{X_T}^{p-1}+\|v_h\|_{X_T}^{p-1}\right\}\|u_h-v_h\|_{X_T}.$$
Therefore, if $T>0$ is sufficiently small depending on $h>0$, $\Gamma$ is contractive on the set $\{u_h\in X_T: \|u_h\|_{X_T}\leq 2\|u_{h,0}\|_{L_h^2}\}$. Thus, DNLS \eqref{DNLS} is locally well-posed in $L_h^2$. The conservation laws \eqref{eq:M} and \eqref{eq:E} can be proven by direct calculations. The lifespan of local solutions is then extended by the mass conservation law \eqref{eq:M}.
\end{proof}

\subsection{Uniform bound for the 2D DNLS}
Next, we show that not only linear solutions (Corollary \ref{linear L^infty bound}) but also nonlinear solutions obey a time-averaged uniform $L_h^\infty$-bound. 

\begin{proposition}[Uniform $L_h^\infty$-bound for the 2D DNLS]\label{L^infty bound}
Suppose that $p$ satisfies \eqref{assumption 1}. Then, the solution $u_h(t)$ to DNLS  \eqref{DNLS} with initial data $u_{h,0}\in H_h^1$, constructed in Proposition \ref{GWP}, satisfies
\begin{equation}\label{long time L^infty bound}
\|u_h\|_{L_t^{q_*}([-T,T]; L_h^\infty)}\lesssim \langle T\rangle^{1/q_*}\|u_{h,0}\|_{H_h^1}, \ \forall T>0,
\end{equation}
where 
$$\left\{\begin{aligned}
&q_*>p-1 &&\textup{if }p\geq 3,\\
&q_*=2 &&\textup{if }1<p<3.
\end{aligned}\right.$$
\end{proposition}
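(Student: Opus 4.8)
The plan is to run a bootstrap (continuity) argument on the time-averaged $L_h^\infty$-norm of the nonlinear solution, exactly paralleling the strategy used for the continuum NLS in \cite{HY2}. First I would apply the Duhamel formula \eqref{eq:Duhamel} and Corollary \ref{linear L^infty bound} to control the linear part: for any $1\le q<\infty$ one has $\|e^{it\Delta_h}u_{h,0}\|_{L_t^q([-T,T];L_h^\infty)}\lesssim \langle T\rangle^{1/q}\|u_{h,0}\|_{H_h^1}$ (splitting $[-T,T]$ into $\lceil T\rceil$ unit intervals and using unitarity of $e^{it\Delta_h}$ on $H_h^1$ to reduce each to \eqref{eq:Lq}). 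The Duhamel term is then estimated by Minkowski's inequality in time together with another application of the linear estimate applied to the profile $(|u_h|^{p-1}u_h)(s)$; the point is that $\|e^{i(t-s)\Delta_h}F\|_{L_h^\infty}$ is bounded (after a time integral) by $\|F\|_{H_h^1}$, and then I need to bound $\|(|u_h|^{p-1}u_h)(s)\|_{H_h^1}$.

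The next step is the nonlinear product estimate: by the norm equivalence (Lemma \ref{Lem:normequivalence}) and the discrete Leibniz rule for $D_h^+$, one gets $\||u_h|^{p-1}u_h\|_{H_h^1}\lesssim \|u_h\|_{L_h^\infty}^{p-1}\|u_h\|_{H_h^1}$. Here the key fact is that the $H_h^1$-norm of $u_h(s)$ is controlled uniformly in time and in $h$: the mass $M_h(u_h)$ is conserved by Proposition \ref{GWP}, and in the defocusing case ($\lambda=1$) the energy $E_h(u_h)$ directly controls $\|\sqrt{-\Delta_h}u_h\|_{L_h^2}^2$, while in the focusing case ($\lambda=-1$, $1<p<3$) one combines energy conservation with the discrete Gagliardo--Nirenberg inequality (Lemma \ref{Lem:GN}) — the restriction $p<3$ being precisely the $L^2$-subcritical threshold in $d=2$ that makes the negative potential term absorbable, yielding a uniform-in-$h$ bound $\sup_t\|u_h(t)\|_{H_h^1}\lesssim (1+\|u_{h,0}\|_{H_h^1})^{C}$. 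Since the discretization $d_h$ is bounded $H^1(\mathbb T^2)\to H_h^1$ uniformly in $h$, this closes the needed a priori control.

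Assembling, with $q_*$ as in the statement, Minkowski plus the nonlinear estimate gives, on a subinterval $[-\tau,\tau]$,
\[
\|u_h\|_{L_t^{q_*}([-\tau,\tau];L_h^\infty)}\lesssim \langle\tau\rangle^{1/q_*}\|u_{h,0}\|_{H_h^1}+\int_{-\tau}^{\tau}\|u_h(s)\|_{L_h^\infty}^{p-1}\,ds\cdot \sup_{s}\|u_h(s)\|_{H_h^1}.
\]
When $1<p<3$ we take $q_*=2$ so that $p-1<2=q_*$ and Hölder in time converts $\int \|u_h\|_{L_h^\infty}^{p-1}$ into $\tau^{1-(p-1)/q_*}\|u_h\|_{L_t^{q_*}L_h^\infty}^{p-1}$, which is subcritical in the bootstrap quantity; when $p\ge3$ we pick any $q_*>p-1$ to the same effect. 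A standard continuity/bootstrap argument on the length of the interval — iterated over $\lceil T\rceil$ unit-length pieces, since the implied constants are time-translation invariant and $h$-independent — then yields \eqref{long time L^infty bound}. The main obstacle I anticipate is twofold: first, verifying the discrete Leibniz/product estimate in $H_h^1$ with an $h$-independent constant (the finite-difference product rule $D_h^+(fg)=(D_h^+f)g+f(\cdot+he_j)D_h^+g$ contributes a harmless shift that must be tracked in $L_h^\infty\cdot L_h^2$); and second, in the focusing case, making sure the Gagliardo--Nirenberg constant and the resulting lower bound on $E_h$ are uniform in $h\in(0,1]$, so that the a priori $H_h^1$-bound does not degenerate as $h\to0$. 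Both are routine given Lemmas \ref{Lem:GN} and \ref{Lem:normequivalence}, but they are where the $h$-uniformity is actually used.
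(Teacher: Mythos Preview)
Your proposal is correct and follows essentially the same route as the paper: Duhamel plus Corollary~\ref{linear L^infty bound}, the product estimate $\||u_h|^{p-1}u_h\|_{H_h^1}\lesssim\|u_h\|_{L_h^\infty}^{p-1}\|u_h\|_{H_h^1}$, the uniform-in-time $H_h^1$ bound from conservation laws and Lemma~\ref{Lem:GN}, and iteration of a short-time bootstrap. Two minor points of difference: the paper obtains the product estimate not via a Leibniz rule but by applying the fundamental-theorem-of-calculus identity \eqref{FTC} directly to the difference $(|u_h|^{p-1}u_h)(x+he_j)-(|u_h|^{p-1}u_h)(x)$, which sidesteps your first anticipated obstacle; and the length $\tau$ of the iterated subintervals depends on the uniform $H_h^1$ bound (hence on $\|u_{h,0}\|_{H_h^1}$) rather than being unit length.
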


\begin{proof}
Let $u_h$ be the solution to DNLS \eqref{DNLS} constructed in Proposition \ref{GWP}, and let $\tau>0$ be a sufficiently small number such that 
$$\|u_h\|_{C_t(I;H_h^1)} +  \|u_h\|_{L_t^{q_*}(I;L_h^{\infty})} \le 4c_0\|u_{h,0}\|_{H_h^1},$$
where $I=[-\tau, \tau]$, $c_0 = \max(c_{q_*},1)$ and $c_{q_*}$ is the implicit constant in \eqref{eq:Lq} (when $q = q_*$). Such $\tau$ is initially chosen depending on $h>0$, but later it can be extended independently of $h>0$. 

From the integral representation of the solution \eqref{eq:Duhamel}, the unitarity of the linear flow yields
\begin{equation}\label{L^infty bound proof 1}
\|u_h\|_{C_t(I; H_h^1)} \le \|u_{h,0}\|_{H_h^1} + \||u_h|^{p-1}u_h\|_{L_t^1(I;H_h^1)},
\end{equation}
and by Corollary \ref{linear L^infty bound}, we obtain
\begin{equation}\label{L^infty bound proof 2}
\|u_h\|_{L_t^{q_*}(I; L_h^{\infty})} \le c_0\|u_{h,0}\|_{H_h^1} + c_0\||u_h|^{p-1}u_h\|_{L_t^1(I;H_h^1)}.
\end{equation}
Applying the fundamental theorem of calculus of the form
\begin{equation}\label{FTC}
\begin{aligned}
|\alpha|^{p-1}\alpha-|\beta|^{p-1}\beta&=\int_0^1 \frac{d}{ds}\left\{\left|\alpha+s(\beta-\alpha)\right|^{p-1}\left(\alpha+s(\beta-\alpha)\right)\right\}ds\\
&=\frac{p+1}{2}\int_0^1 \left|\alpha+s(\beta-\alpha)\right|^{p-1}ds\cdot (\beta-\alpha)\\
&\quad+\frac{p-1}{2}\int_0^1 \left|\alpha+s(\beta-\alpha)\right|^{p-3}(\alpha+s(\beta-\alpha))^2ds\cdot \overline{\beta-\alpha}
\end{aligned}
\end{equation}
with $\alpha=u_h(x+he_j)$ and $\beta=u_h(x)$, we obtain
$$\begin{aligned}
\|D_{h;j}^+(|u_h|^{p-1}u_h)\|_{L_h^2}&=\frac{1}{h}\|(|u_h|^{p-1}u_h)(x+he_j)-|u_h|^{p-1}u_h(x)\|_{L_h^2}\\
&\lesssim \frac{1}{h}\|u_h\|_{L_h^\infty}^{p-1}\|u_h(x+he_j)-u_h(x)\|_{L_h^2}=\|u_h\|_{L_h^\infty}^{p-1}\|D_{h;j}^+u_h\|_{L_h^2}.
\end{aligned}$$
Hence, by the norm equivalence (Lemma \ref{Lem:normequivalence}), it follows that 
$$\begin{aligned}
\||u_h|^{p-1}u_h\|_{H_h^1}&\sim\||u_h|^{p-1}u_h\|_{L_h^2}+\sum_{j=1}^d\|D_{h;j}^+(|u_h|^{p-1}u_h)\|_{L_h^2}\\
&\lesssim \|u_h\|_{L_h^\infty}^{p-1}\|u_h\|_{L_h^2}+\sum_{j=1}^d \|u_h\|_{L_h^\infty}^{p-1}\|D_{h;j}^+u_h\|_{L_h^2}\\
&\sim \|u_h\|_{L_h^\infty}^{p-1}\|u_h\|_{H_h^1}.
\end{aligned}$$
Inserting this bound in \eqref{L^infty bound proof 1} and \eqref{L^infty bound proof 2}, we obtain
$$\begin{aligned}
&\|u_h\|_{C_t(I; H_h^1)}+\|u_h\|_{L_t^{q_*}(I; L_h^{\infty})}\\
&\le 2c_0\|u_{h,0}\|_{H_h^1} + C(2\tau)^{1-\frac{p-1}{q_*}}\|u_h\|_{L_t^{q_*}(I; L_h^{\infty})}^{p-1}\|u_h\|_{C_t(I; H_h^1)}\\
&\le 2c_0\|u_{h,0}\|_{H_h^1} + C(2\tau)^{1-\frac{p-1}{q_*}}\left(4c_0\|u_{h,0}\|_{H_h^1}\right)^p.
\end{aligned}$$
Thus, it follows that 
\begin{equation}\label{L^infty bound proof 3}
\|u_h\|_{L_t^{q_*}(I; L_h^{\infty})}\leq 4c_0\|u_{h,0}\|_{H_h^1}
\end{equation}
as long as $C(2\tau)^{1-\frac{p-1}{q_*}}(4c_0\|u_{h,0}\|_{H_h^1})^p\leq 2c_0\|u_{h,0}\|_{H_h^1}$ is satisfied. Therefore, the time interval $I$ can be extended to a short time interval of which the length depends on $\|u_{h,0}\|_{H_h^1}$ but is independent of $h>0$.

To extend the time interval arbitrarily, we show that $\|u_h(t)\|_{H_h^1}$ is bounded uniformly in time. Indeed, by the mass conservation law, it is sufficient to show that $\|(-\Delta_h)^{\frac{1}{2}}u_h(t)\|_{L_h^2}$ is bounded globally in time. When $\lambda>0$, the energy conservation law immediately implies that $\|(-\Delta_h)^{\frac{1}{2}}u_h(t)\|_{L_h^2}^2\leq 2E_h(u_h(t))=2E_h(u_{h,0})$ for all $t$. When $\lambda<0$, we apply both the mass and the energy conservation laws as well as the 2D uniform Gagliardo--Nirenberg inequality (Lemma \ref{Lem:GN}) to obtain
$$\begin{aligned}
\frac{1}{2}\|(-\Delta_h)^{\frac{1}{2}}u_h(t)\|_{L_h^2}^2&=E_h(u_h(t))+\frac{\lambda}{p+1}\|u_{h}(t)\|_{L_h^{p+1}}^{p+1}\\
&\leq E_h(u_h(t))+C\|u_h(t)\|_{L_h^2}^{2}\|(-\Delta_h)^{\frac{1}{2}}u_h(t)\|_{L_h^2}^{p-1}\\
&\leq E_h(u_{h,0})+CM_h(u_{h,0})\|(-\Delta_h)^{\frac{1}{2}}u_h(t)\|_{L_h^2}^{p-1}.
\end{aligned}$$
By the assumption \eqref{assumption 1}, we have $p-1<2$. Thus, we can use Young's inequality to bound $\|(-\Delta_h)^{\frac{1}{2}}u_h(t)\|_{L_h^2}^2$ only in terms of the mass $M_h(u_{h,0})$ and the energy $E_h(u_{h,0})$.

Because $\|u_h(t)\|_{H_h^1}$ is bounded uniformly in time, \eqref{L^infty bound proof 3} can be iterated with the new initial data $u(\tau), u(2\tau), ...$ and with the bounds \eqref{L^infty bound proof 3} on the intervals $[\tau,2\tau]$, $[2\tau, 3\tau]$, ... to cover an arbitrarily long time interval $[-T,T]$. Therefore, summing up, we obtain the desired bound \eqref{long time L^infty bound}.
\end{proof}

\section{Proof of the contimuum limit}\label{sec: proof of main theorem}
In this section, we prove the main theorem of this article (Theorem \ref{main theorem}).

\subsection{Preliminaries}

We first provide lemmas concerning the discretization and linear interpolation (see \eqref{discretization} and \eqref{p_h}). Analogous lemmas on the lattice $h\mathbb{Z}^d$ have been stated and proven in \cite{HY2}. Thus, we omit some details. Indeed, differentiation (resp., discrete differentiation) is a local operation, thus the argument used in the non-compact domain $\R^d$ (resp, $h\Z^d$) can easily be adopted to the compact domain $\T^d$ (resp, $\T_h^d$).

\begin{lemma}[Boundedness of discretization and linear interpolation]\label{lem:pre1}
\[\|d_h(f)\|_{H_h^1(\mathbb{T}_h^d)} \lesssim \|f\|_{H^1(\mathbb{T}^d)} \quad \mbox{and} \quad \|p_h(f_h)\|_{H^1(\mathbb{T}^d)} \lesssim \|f_h\|_{H_h^1(\mathbb{T}_h^d)}.\]
\end{lemma}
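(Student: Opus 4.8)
The plan is to prove the two inequalities separately, and in each case to exploit the locality of (discrete) differentiation, reducing everything to elementary estimates on a single cell $\underline{x}+[0,h)^d$. Throughout I would freely use the norm equivalence of Lemma \ref{Lem:normequivalence}, so that $\|f_h\|_{H_h^1}^2\sim\|f_h\|_{L_h^2}^2+\sum_{j}\|D_{h,j}^+f_h\|_{L_h^2}^2$, and similarly $\|f\|_{H^1(\mathbb{T}^d)}^2\sim\|f\|_{L^2}^2+\sum_j\|\partial_j f\|_{L^2}^2$; this converts both claimed bounds into estimates on $L^2$-norms of functions and of (difference) derivatives.

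For the discretization bound, I would first handle the $L^2$ part. Since $(d_hf)(x)=h^{-d}\int_{x+[0,h)^d}f$, Jensen's (or Cauchy--Schwarz) inequality gives $|(d_hf)(x)|^2\le h^{-d}\int_{x+[0,h)^d}|f(y)|^2\,dy$, and summing $h^d\sum_{x\in\mathbb{T}_h^d}$ telescopes the cells to cover $\mathbb{T}^d$ exactly once, yielding $\|d_hf\|_{L_h^2}\le\|f\|_{L^2}$. For the derivative part, observe that $D_{h,j}^+(d_hf)(\underline{x})=\frac{1}{h}\big((d_hf)(\underline{x}+he_j)-(d_hf)(\underline{x})\big)=h^{-d}\int_{[0,h)^d}\frac{f(\underline{x}+he_j+z)-f(\underline{x}+z)}{h}\,dz$, and the inner difference quotient equals $h^{-1}\int_0^h\partial_jf(\underline{x}+z+se_j)\,ds$ by the fundamental theorem of calculus (for smooth $f$, then by density). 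Hence $D_{h,j}^+(d_hf)(\underline{x})$ is an average of $\partial_jf$ over the box $\underline{x}+[0,h)^d+[0,h]e_j$; applying Cauchy--Schwarz in the averaging variables and then summing $h^d\sum_{\underline{x}}$, noting each point of $\mathbb{T}^d$ lies in a bounded number (depending only on $d$) of such boxes, gives $\|D_{h,j}^+(d_hf)\|_{L_h^2}\lesssim\|\partial_jf\|_{L^2}$. Combining yields the first inequality.

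For the interpolation bound, fix a cell $\underline{x}+[0,h)^d$ and recall $(p_hf_h)(x)=f_h(\underline{x})+\sum_jD_{h,j}^+f_h(\underline{x})(x_j-\underline{x}_j)$, so on that cell $p_hf_h$ is affine with gradient exactly $\big(D_{h,1}^+f_h(\underline{x}),\dots,D_{h,d}^+f_h(\underline{x})\big)$. Thus $\|\nabla(p_hf_h)\|_{L^2(\mathbb{T}^d)}^2=\sum_{\underline{x}}h^d\sum_j|D_{h,j}^+f_h(\underline{x})|^2=\sum_j\|D_{h,j}^+f_h\|_{L_h^2}^2$, which is $\lesssim\|f_h\|_{H_h^1}^2$ directly. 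For the $L^2$ part, on each cell $|(p_hf_h)(x)|\lesssim|f_h(\underline{x})|+h\sum_j|D_{h,j}^+f_h(\underline{x})|\lesssim|f_h(\underline{x})|+\sum_{j}|f_h(\underline{x}+he_j)|$ since $h|D_{h,j}^+f_h(\underline{x})|\le|f_h(\underline{x}+he_j)|+|f_h(\underline{x})|$; squaring, integrating over the cell (which contributes a factor $h^d$), and summing over $\underline{x}$ gives $\|p_hf_h\|_{L^2(\mathbb{T}^d)}^2\lesssim\sum_{\underline{x}}h^d|f_h(\underline{x})|^2+\sum_{j}\sum_{\underline{x}}h^d|f_h(\underline{x}+he_j)|^2\lesssim\|f_h\|_{L_h^2}^2$, using the translation invariance of the sum over the finite group $\mathbb{T}_h^d$. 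Adding the two contributions finishes the second inequality.

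I expect no serious obstacle here; the only points requiring a little care are the bookkeeping of overlaps (each point of $\mathbb{T}^d$ is covered by $O_d(1)$ of the enlarged boxes $\underline{x}+[0,h)^d+[0,h]e_j$ appearing in the derivative estimate) and the density argument promoting the fundamental-theorem-of-calculus step from smooth $f$ to general $f\in H^1(\mathbb{T}^d)$. Since $d_h$ and $p_h$ are manifestly bounded operators on the relevant finite-dimensional/continuous spaces, the density step is routine. As the excerpt itself notes, these are exactly the $h\mathbb{Z}^d$-arguments of \cite{HY2} transplanted to the torus, the periodicity only entering through the harmless replacement of sums over $\mathbb{Z}^d$ by sums over the finite group $\mathbb{T}_h^d$.
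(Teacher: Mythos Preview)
Your proposal is correct and follows exactly the route the paper indicates: invoke the norm equivalence of Lemma~\ref{Lem:normequivalence} to reduce to $L^2$ bounds on functions and difference quotients, then argue cell-by-cell via Jensen/Cauchy--Schwarz and bounded overlap, precisely as in \cite[Lemmas~5.1 and~5.2]{HY2}. The paper's own proof simply cites \cite{HY2} without spelling out these steps, so your write-up is effectively a faithful expansion of what the paper omits.
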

\begin{proof}
We compute the discrete Sobolev norm using Lemma~\ref{Lem:normequivalence}.
Then the proof follows from the same method as \cite[Lemmas 5.1 and 5.2]{HY2}]
\end{proof}

\begin{lemma}\label{lem:pre2}
Let $h\in(0,1]$. Then, for $f\in H^1(\T^d)$, we have
\[\|(p_h\circ d_h)f - f\|_{L^2(\T^d)} \lesssim h\|f\|_{H^1(\T^d)}.\]
\end{lemma}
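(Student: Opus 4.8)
The plan is to prove the estimate $\|(p_h\circ d_h)f - f\|_{L^2(\T^d)} \lesssim h\|f\|_{H^1(\T^d)}$ by a direct local computation on each small cube of the lattice, reducing everything to a Poincar\'e-type inequality. Since differentiation is a local operation, the argument is essentially the one used in \cite[Section 5]{HY2} on $h\Z^d$, transplanted to $\T^d$; the only new feature is keeping track of the periodic identification, which causes no trouble because the cubes $\underline{x}+[0,h)^d$ with $\underline{x}\in\T_h^d$ tile $\T^d$ exactly.

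First I would fix $\underline{x}\in\T_h^d$ and analyze $(p_h\circ d_h)f - f$ on the cube $Q_{\underline{x}}:=\underline{x}+[0,h)^d$. By the definition of $d_h$ in \eqref{discretization} and of $p_h$ in \eqref{p_h}, on $Q_{\underline x}$ the function $(p_h\circ d_h)f$ is the affine function whose value at the corner $\underline x$ is the average $\fint_{Q_{\underline x}}f$ and whose $j$-th slope is the corresponding discrete difference quotient of the cube-averages of $f$. The key point is that this affine function reproduces the zeroth- and first-order behavior of $f$ well enough that the difference has mean roughly zero on $Q_{\underline x}$ up to a controlled error. Concretely, I would write $(p_h\circ d_h)f - f = \big(\fint_{Q_{\underline x}}f - f\big) + \big((p_h\circ d_h)f - \fint_{Q_{\underline x}}f\big)$ and estimate the two pieces separately on $Q_{\underline x}$.

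For the first piece, the Poincar\'e--Wirtinger inequality on the cube of side $h$ gives $\|f - \fint_{Q_{\underline x}}f\|_{L^2(Q_{\underline x})} \lesssim h\|\nabla f\|_{L^2(Q_{\underline x})}$, with a constant independent of $h$ by scaling. For the second piece, $(p_h\circ d_h)f - \fint_{Q_{\underline x}}f$ is affine with the corner value shifted by $O(h)$ times the discrete slopes $D_{h,j}^+(d_hf)(\underline x)$; since $x_j - \underline x_j \in [0,h)$ on $Q_{\underline x}$, its $L^2(Q_{\underline x})$ norm is $\lesssim h^{d/2}\cdot h\,|D_{h,j}^+(d_hf)(\underline x)|$ summed over $j$, and each $|D_{h,j}^+(d_hf)(\underline x)|$ is, after expanding the difference of averages, bounded by an average of $|\pa_j f|$ over the union of the two adjacent cubes. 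Squaring, summing over $\underline x\in\T_h^d$, and using that each cube is counted boundedly many times then yields $\|(p_h\circ d_h)f - \fint f\|_{L^2(\T^d)}^2 \lesssim h^2\|\nabla f\|_{L^2(\T^d)}^2$; alternatively one simply invokes Lemma~\ref{lem:pre1} applied to $f - (\text{affine part})$ together with the boundedness of $d_h$. Combining both pieces via the triangle inequality over all cubes gives the claim.

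The main obstacle is the regularity threshold: the estimate is stated for $f\in H^1$, which is exactly the borderline case where pointwise values and cube averages of $f$ and $\nabla f$ need not be well defined in dimension $d\ge 2$. So the difference quotients $D_{h,j}^+(d_hf)$ must be controlled entirely through integrated quantities — I would use the identity $D_{h,j}^+(d_hf)(\underline x) = \frac1{h^{d+1}}\int_{\underline x+[0,h)^d}\big(f(y+he_j)-f(y)\big)\,dy = \frac1{h^d}\int_{\underline x+[0,h)^d}\fint_0^h \pa_j f(y+se_j)\,ds\,dy$, valid for $H^1$ functions by density, which exhibits it as an average of $\pa_j f$ over a set of measure $\sim h^d$ and hence bounds $\sum_{\underline x}h^d|D_{h,j}^+(d_hf)(\underline x)|^2 \lesssim \|\pa_j f\|_{L^2}^2$ by Jensen and finite overlap. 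Once this is in hand the rest is routine; I would also remark, as the authors do, that the whole lemma is just \cite[Lemma 5.3]{HY2} with $\R^d$ replaced by $\T^d$, so a short proof simply citing that reference after noting the locality is acceptable.
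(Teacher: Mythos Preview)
Your proposal is correct and matches the paper's approach: the paper's proof is literally a one-line citation of \cite[Proposition~5.3]{HY2}, noting that the argument there (on $h\Z^d$) transfers verbatim to $\T_h^d$ by locality, and your sketch spells out exactly that argument via the cubewise Poincar\'e--Wirtinger inequality together with the integral identity for $D_{h,j}^+(d_hf)$. The only minor point is that your aside ``alternatively one simply invokes Lemma~\ref{lem:pre1}\ldots'' is not quite a substitute for the second piece, but your primary argument is sound and complete.
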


\begin{proof}
The proof closely follows from the proof of \cite[Proposition 5.3]{HY2}.
\end{proof}

\begin{lemma}\label{lem:pre3}
Let $h\in(0,1]$. If $f \in H^1(\T^d)$ and $g_h \in H_h^1(\T_h^d)$, then 
\[\| p_h e^{it\Delta_h}g_h - e^{it\Delta}f\|_{L^2(\mathbb{T}^d)} \lesssim \sqrt{h}|t|(\|g_h\|_{H_h^1(\mathbb{T}_h^d)} + \|f\|_{H^1(\mathbb{T}^d)}) + \|p_hg_h - f\|_{L^2(\mathbb{T}^d)}.\]
In particular,
\[\| p_h e^{it\Delta_h}d_h(f) - e^{it\Delta}f\|_{L^2(\mathbb{T}^d)} \lesssim \sqrt{h}\bra{t}\|f\|_{H^1(\mathbb{T}^d)}.\]
\end{lemma}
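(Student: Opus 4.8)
The plan is to compare the discrete and continuous linear flows by splitting the error into a ``symbol error'' (the discrete Laplacian multiplier differs from $|k|^2$) and a ``discretization error'' (the operators $p_h$ and $d_h$ are not exact inverses). For the second statement, one simply applies the first with $g_h = d_h(f)$ and invokes the boundedness $\|d_h(f)\|_{H_h^1}\lesssim\|f\|_{H^1}$ from Lemma~\ref{lem:pre1} together with $\|p_h\circ d_h f - f\|_{L^2}\lesssim h\|f\|_{H^1}$ from Lemma~\ref{lem:pre2}; so the whole content is in the first inequality.

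For the first inequality, I would insert the intermediate quantity $p_h e^{it\Delta_h}d_h(f)$... actually, more directly, write
\[
p_h e^{it\Delta_h} g_h - e^{it\Delta} f = \underbrace{p_h e^{it\Delta_h} g_h - e^{it\Delta}(p_h g_h)}_{(\mathrm{I})} + \underbrace{e^{it\Delta}(p_h g_h - f)}_{(\mathrm{II})}.
\]
Term (II) is immediately bounded by $\|p_h g_h - f\|_{L^2(\T^d)}$ since $e^{it\Delta}$ is unitary on $L^2(\T^d)$. For term (I), the idea is to further compare $p_h e^{it\Delta_h} g_h$ with $e^{it\Delta} p_h g_h$ by passing through the Fourier side. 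Here I would use that $p_h$ intertwines, up to controlled errors, the discrete Fourier modes $e^{ik\cdot x}$ on $\T_h^d$ (with $k\in(\T_h^d)^*$) with the corresponding exponentials on $\T^d$; crucially the frequency support of $g_h$ lives in $\max|k_j|\le M = \pi/h$, so all relevant frequencies satisfy $|k|\lesssim 1/h$. On that range, Lemma~\ref{Laplacian as a multiplier} gives the discrete symbol $\sum_j \frac{2}{h^2}(1-\cos hk_j)$, and a Taylor expansion yields
\[
\Big|\sum_{j=1}^d \tfrac{2}{h^2}(1-\cos hk_j) - |k|^2\Big| \lesssim h^2|k|^4 \lesssim |k|^2 \cdot (h|k|)^2 \lesssim |k|^2,
\]
but more usefully $\lesssim h\,|k|^3 \le h\langle k\rangle^2 \cdot \langle k\rangle$, so that the difference of the two time-evolution multipliers, $|e^{-it\sum_j\frac{2}{h^2}(1-\cos hk_j)} - e^{-it|k|^2}| \le |t|\cdot h\langle k\rangle^3$, costs one factor of $|t|$, one factor of $h$, and three derivatives — which after using $\langle k\rangle \lesssim \langle M\rangle \sim 1/h$ to absorb one derivative... one has to be careful: $\langle k\rangle^3 = \langle k\rangle^2\langle k\rangle \lesssim \langle k\rangle^2 /h$, giving $|t| h \langle k\rangle^3 \lesssim |t|\langle k\rangle^2$, which loses too much. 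Instead I would only trade \emph{half} a derivative: bound $h^2|k|^4 \le (h|k|)^{?}\langle k\rangle^2$; since $h|k|\lesssim 1$ we get $h^2|k|^4 = (h|k|)^{3/2}\cdot h^{1/2}|k|^{5/2} \lesssim h^{1/2}\langle k\rangle^{5/2}$, still not quite. The clean route is: $h^2|k|^4 \le h \cdot (h|k|)\cdot |k|^3 \le h\langle k\rangle^2\cdot(h|k|)^{1/2}\langle k\rangle^{1/2}/\dots$ — the point I want is $\|(\text{symbol difference})\langle\nabla\rangle^{-1}\|_{L^\infty_k}\lesssim h$ on $|k|\lesssim 1/h$, which holds because $h^2|k|^4/\langle k\rangle \lesssim h^2|k|^3 \lesssim h^2 (1/h)^2 |k| = |k| \lesssim 1/h$... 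This is exactly the delicate bookkeeping; the honest statement is that on $|k|\lesssim 1/h$ one has $h^2|k|^4 \lesssim \sqrt h\,\langle k\rangle^2$ fails but $h^2 |k|^4 \lesssim \sqrt h \cdot |k| \cdot \langle k \rangle^2$ does \emph{not} hold either — rather $h^2|k|^4 = (h^2 |k|^3)\cdot |k|$ and $h^2|k|^3 \le h^2 (1/h)^3 = 1/h$, hmm.

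So the main obstacle — and the step I would spend the most care on — is the precise interpolation of the symbol error against available derivatives and the power of $h$: one wants to show $\|p_h e^{it\Delta_h}g_h - e^{it\Delta}p_h g_h\|_{L^2(\T^d)}\lesssim \sqrt h\,|t|\,\|g_h\|_{H_h^1}$, and the square root of $h$ (rather than $h$ itself) should emerge from optimizing the split between ``$h|k|$ is small'' (which gives positive powers of $h$) and ``$|k|$ up to $1/h$'' (which forces negative powers of $h$ when trading for the single derivative in $H^1_h$), exactly as in \cite[Proposition 5.5 or its analog]{HY2}. Concretely I would decompose $g_h$ by the Littlewood--Paley pieces $P_N g_h$ of Section~\ref{sec: Littlewood-Paley}, estimate each dyadic block where $|k|\sim N/h$ using $|e^{-it\sigma_h(k)} - e^{-it|k|^2}| \le |t|\,|\sigma_h(k)-|k|^2| \lesssim |t|\,h^2 (N/h)^4 = |t| N^4/h^2$, multiply by the $L^2$ norm $\|P_N g_h\|_{L_h^2}$, compare to the $H_h^1$ weight $\langle N/h\rangle \sim N/h$, and sum over $N_*\le N\le 1$; the worst block is $N\sim 1$ giving $|t|/h^2$ versus weight $1/h$, i.e. a \emph{net loss} of $1/h$ — which means this crude per-block bound is \emph{not} enough and one genuinely needs the finer pointwise symbol estimate $|\sigma_h(k) - |k|^2| \lesssim h\,|k|^3$ valid for all $k$ (from the next term in the Taylor series being $O(h^4|k|^6)$ subdominant when $h|k|\lesssim 1$), yielding per-block $|t|\,h(N/h)^3 = |t| N^3/h^2$ versus weight $N/h$, ratio $|t| N^2/h$, worst at $N\sim 1$ giving $|t|/h$ — still a loss. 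The resolution, and the real trick, is that one does \emph{not} compare $e^{it\Delta_h}$ with $e^{it\Delta}$ at matched frequencies directly but rather exploits that the linear interpolation $p_h$ itself has an $O(h)$ smoothing-type defect, so that $p_h e^{it\Delta_h} = e^{it\widetilde\Delta}p_h + O_{H^1_h\to L^2}(h|t|)$ for a modified operator, and the genuine bound $\sqrt h$ comes from interpolating the trivial $O(1)$ estimate with a $O(h)$ estimate available only after spending \emph{two} derivatives (which $H^1_h$ does not have) — hence geometric mean gives $\sqrt h$ at the cost of \emph{one} derivative, which $H^1_h$ does have. I would therefore structure the proof as: (i) reduce to term (I); (ii) establish the two endpoint bounds $\|(I)\|_{L^2}\lesssim |t|\,h\,\|g_h\|_{H^2_h}$ (from the sharp symbol Taylor estimate with two derivatives to spare) and $\|(I)\|_{L^2}\lesssim \|g_h\|_{L^2_h}$ (trivial, from unitarity of both flows and boundedness of $p_h$); (iii) interpolate to get $\|(I)\|_{L^2}\lesssim \sqrt{h|t|\cdot |t|}\,\|g_h\|_{H^1_h}$... cleaning up constants, $\sqrt{h}\,|t|\,\|g_h\|_{H^1_h}$; (iv) combine with term (II) and then specialize $g_h = d_h f$ via Lemmas~\ref{lem:pre1} and~\ref{lem:pre2} to conclude. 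The essential difficulty, to reiterate, is item (iii)'s bookkeeping — making sure the frequency truncation $|k|\lesssim 1/h$ and the single available derivative conspire to give $\sqrt h$ and not $1$ or $h$ — and this mirrors exactly the corresponding lemma in \cite{HY2}, which is why the authors can afford to be brief.
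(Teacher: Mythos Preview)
Your decomposition into (I) $= p_h e^{it\Delta_h}g_h - e^{it\Delta}(p_h g_h)$ and (II) $= e^{it\Delta}(p_h g_h - f)$ is sound, as is the reduction of the second inequality to the first via Lemmas~\ref{lem:pre1} and~\ref{lem:pre2}. The gap is in your treatment of (I).

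The claimed endpoint $\|(I)\|_{L^2}\lesssim |t|\,h\,\|g_h\|_{H^2_h}$ is false. With $\sigma_h(k) = \sum_j \tfrac{2}{h^2}(1-\cos hk_j)$ one has $|\sigma_h(k)-|k|^2|\lesssim h^2|k|^4$, so the multiplier satisfies $|e^{-it\sigma_h(k)}-e^{-it|k|^2}|\lesssim |t|h^2|k|^4$; dividing by $\langle k\rangle^2$ leaves $|t|h^2|k|^2$, which at the top frequency $|k|\sim 1/h$ is of order $|t|$, not $|t|h$. The $H^2_h\to L^2$ bound is therefore only $O(|t|)$. Moreover, even granting your claimed endpoints $1$ (at $L^2_h$) and $|t|h$ (at $H^2_h$), interpolation at $H^1_h$ would give $\sqrt{|t|h}$, not $\sqrt{h}\,|t|$; the expression ``$\sqrt{h|t|\cdot|t|}$'' is not what complex interpolation produces from those two bounds.

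The paper's route (following \cite[Proposition~5.4]{HY2}) is not abstract interpolation but a direct Fourier-side estimate. One writes $\mathcal{F}(p_h f_h)(k) = \mathcal{P}_h(k)\,(\widetilde{\mathcal{F}_h}f_h)(k)$ for an explicit multiplier $\mathcal{P}_h$ and the $\tfrac{2\pi}{h}$-periodic extension $\widetilde{\mathcal{F}_h}$ of the discrete transform, and then estimates the fundamental block $k\in(\mathbb{T}_h^d)^*$ and the aliased modes $k\notin(\mathbb{T}_h^d)^*$ separately, the latter via the decay of $\mathcal{P}_h$. The $\sqrt{h}$ arises exactly from the mechanism you identified mid-sketch but then abandoned: split the fundamental block at $|k|\sim h^{-1/2}$. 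For $|k|\le h^{-1/2}$ one has $h^2|k|^4 = h^2|k|^3\cdot|k| \le h^{1/2}|k|$, so the multiplier is $\lesssim\sqrt{h}\,|t|\,\langle k\rangle$ and this piece contributes $\sqrt{h}\,|t|\,\|g_h\|_{H^1_h}$. For $|k|>h^{-1/2}$ one pairs the trivial bound $|e^{-it\sigma_h(k)}-e^{-it|k|^2}|\le 2$ with the estimate $\big(\sum_{|k|>h^{-1/2}}|(\mathcal{F}_h g_h)(k)|^2\big)^{1/2}\le\sqrt{h}\,\|g_h\|_{H^1_h}$ --- this is precisely what the paper means by ``the $O(\sqrt{h})$-bound is obtained from the regularity gap''. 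Your interpolation shortcut does not capture this frequency-dependent splitting and cannot substitute for it.
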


\begin{proof}
The proof closely follows from the proof of \cite[Proposition 5.4]{HY2}. First, using direct calculations, we observe that the Fourier transform of the linear interpolation of a discrete function is given by 
\begin{align*}
\mathcal{F}_h(p_h f_h)( k )= \mathcal P_h(k)
( \widetilde{ \mathcal F_h } f_h ) (k)
,\quad  \forall k\in \Z^d
\end{align*}
where
$$\mathcal P_h(k)= \frac{1}{h^d}\int_{[0,h)^d} e^{-ix\cdot k} dx + \sum_{j=1}^d \frac{e^{ihk_j}-1}{h}\frac{1}{h^d}\int_{[0,h)^d} x_je^{-ix\cdot k} dx$$
and $\widetilde{ \mathcal F_h }$ denotes the $[-\frac{\pi}{h},\frac{\pi}{h})^d$-periodic extension of the discrete Fourier transform $\mathcal F_h$, precisely, 
$( \widetilde{ \mathcal F_h } f_h ) (k)
=(\mathcal F_h f_h)(k')$ for all $k\in k' +\frac{2\pi}{h}\Z^d$. We also observe that 
\begin{align*}
\big| e^{-it\frac{4}{h^2}\sum_{j=1}^d \sin^2(\frac{h k_j}{2})}
- e^{it |k|^2} \big| \ls |t| h^2 |k|^4, \quad k\in (\T_h^d)^*.
\end{align*}	
By these observations and Lemma \ref{lem:pre1} and \ref{lem:pre2}, one can proceed as in the proof of \cite[Proposition 5.4]{HY2}. Here, an $O(\sqrt{h})$-bound is obtained from the regularity gap between the norms on the left- and right-hand sides.
\end{proof}

As a corollary of Lemma \ref{lem:pre3}, we have the following.
\begin{corollary}\label{cor:pre1}
Let $h\in(0,1]$ and $p>1$. Then,
\begin{equation}\label{eq:I_2}
\left\|\left(p_h e^{i(t-s)\Delta_h} - e^{i(t-s)\Delta}p_h\right)\left(|u_h|^{p-1}u_h \right)(s)\right\|_{L^2(\mathbb{T}^d)} \lesssim \sqrt{h}|t-s|\|u_h\|_{L_h^{\infty}(\mathbb{T}_h^d)}^{p-1}\|u_h\|_{H_h^1(\mathbb{T}_h^d)}.
\end{equation}
\end{corollary}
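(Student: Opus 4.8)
\textbf{Proof proposal for Corollary \ref{cor:pre1}.}

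The plan is to reduce the claimed estimate directly to Lemma \ref{lem:pre3} by choosing the right pair of functions, and then to control the resulting error terms using the norm-equivalence lemma (Lemma \ref{Lem:normequivalence}) together with the pointwise nonlinear bound already established in the proof of Proposition \ref{L^infty bound}. Concretely, I would apply Lemma \ref{lem:pre3} at time $t-s$ in place of $t$, with $g_h = (|u_h|^{p-1}u_h)(s)$ and $f = p_h\bigl((|u_h|^{p-1}u_h)(s)\bigr)$. With this choice the second half of the right-hand side of Lemma \ref{lem:pre3}, namely $\|p_h g_h - f\|_{L^2(\T^d)}$, vanishes identically, and we are left with
\[
\left\|p_h e^{i(t-s)\Delta_h} g_h - e^{i(t-s)\Delta} f\right\|_{L^2(\T^d)} \lesssim \sqrt{h}\,|t-s|\left(\|g_h\|_{H_h^1(\T_h^d)} + \|f\|_{H^1(\T^d)}\right).
\]
Since $e^{i(t-s)\Delta} f = e^{i(t-s)\Delta} p_h\bigl((|u_h|^{p-1}u_h)(s)\bigr)$, the left-hand side is exactly the quantity in \eqref{eq:I_2}, so it remains only to bound the two Sobolev norms on the right.

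For the first term, $\|g_h\|_{H_h^1(\T_h^d)} = \||u_h|^{p-1}u_h\|_{H_h^1}(s)$, I would invoke the chain of inequalities established inside the proof of Proposition \ref{L^infty bound}: using the fundamental theorem of calculus \eqref{FTC} applied to the discrete difference quotient $D_{h;j}^+(|u_h|^{p-1}u_h)$ with $\al = u_h(x+he_j)$, $\be = u_h(x)$, one gets $\|D_{h;j}^+(|u_h|^{p-1}u_h)\|_{L_h^2} \lesssim \|u_h\|_{L_h^\infty}^{p-1}\|D_{h;j}^+ u_h\|_{L_h^2}$, and combining this with the trivial bound $\||u_h|^{p-1}u_h\|_{L_h^2} \lesssim \|u_h\|_{L_h^\infty}^{p-1}\|u_h\|_{L_h^2}$ and Lemma \ref{Lem:normequivalence} yields $\||u_h|^{p-1}u_h\|_{H_h^1} \lesssim \|u_h\|_{L_h^\infty}^{p-1}\|u_h\|_{H_h^1}$, which is precisely the bound we want (evaluated at time $s$). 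For the second term, $\|f\|_{H^1(\T^d)} = \|p_h g_h\|_{H^1(\T^d)}$, I would apply the boundedness of the linear interpolation operator from Lemma \ref{lem:pre1}, namely $\|p_h g_h\|_{H^1(\T^d)} \lesssim \|g_h\|_{H_h^1(\T_h^d)}$, and then reuse the same nonlinear bound just obtained. Putting the two estimates together gives $\|g_h\|_{H_h^1} + \|f\|_{H^1} \lesssim \|u_h\|_{L_h^\infty}^{p-1}\|u_h\|_{H_h^1}$ (all at time $s$), and substituting this back completes the proof.

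There is essentially no serious obstacle here: the corollary is a direct packaging of Lemma \ref{lem:pre3} with the nonlinear algebra bound, and the only point requiring a small amount of care is the clean choice $f = p_h g_h$ that kills the interpolation-difference term in Lemma \ref{lem:pre3}. The mildly technical ingredient, namely the estimate $\||u_h|^{p-1}u_h\|_{H_h^1} \lesssim \|u_h\|_{L_h^\infty}^{p-1}\|u_h\|_{H_h^1}$, has already been carried out in the proof of Proposition \ref{L^infty bound}, so it can simply be cited. If one wishes to avoid relying on the identity $p_h g_h = f$ being \emph{exact}, an alternative is to apply Lemma \ref{lem:pre3} in its first form with $f$ arbitrary and then absorb $\|p_h g_h - f\|_{L^2}$ either by taking $f = p_h g_h$ as above or by a triangle-inequality argument, but the first route is cleanest and is what I would write up.
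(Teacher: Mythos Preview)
Your proposal is correct and follows essentially the same route as the paper: apply Lemma \ref{lem:pre3} with $g_h=(|u_h|^{p-1}u_h)(s)$ and $f=p_h g_h$ so that the $\|p_h g_h - f\|_{L^2}$ term vanishes, then bound $\|p_h g_h\|_{H^1}\lesssim\|g_h\|_{H_h^1}$ via Lemma \ref{lem:pre1} and control $\||u_h|^{p-1}u_h\|_{H_h^1}\lesssim\|u_h\|_{L_h^\infty}^{p-1}\|u_h\|_{H_h^1}$ by the nonlinear estimate from the proof of Proposition \ref{L^infty bound}. The only difference is cosmetic: the paper compresses the last step into the phrase ``H\"older's inequality,'' whereas you spell out the discrete product-rule argument explicitly.
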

\begin{proof}
An immediate application of Lemma \ref{lem:pre3} to the left-hand side of \eqref{eq:I_2} yields
\[\mbox{LHS of } \eqref{eq:I_2} \lesssim h^\frac12|t-s|\left(\||u_h|^{p-1}u_h\|_{H_h^1} + \|p_h(|u_h|^{p-1}u_h)\|_{H^1}\right).\]
Lemma \ref{lem:pre1} and H\"older's inequality control the right-hand side, and we thus obtain \eqref{eq:I_2}.
\end{proof}

\begin{lemma}[Proposition 5.7 in \cite{HY2}]\label{lem:pre4}
Let $h\in(0,1]$ and $p > 1$. Then,
\[\|p_h\left(|u_h|^{p-1}u_h\right) - |p_hu_h|^{p-1}p_hu_h\|_{L^2(\mathbb{T}^d)} \lesssim h\|u_h\|_{L_h^{\infty}(\mathbb{T}_h^d)}^{p-1}\|u_h\|_{H_h^1(\mathbb{T}_h^d)}.\]
\end{lemma}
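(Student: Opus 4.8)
The plan is to argue cell by cell, reducing the claim to a first-order affine-interpolation error estimate. Fix $\underline x\in\mathbb{T}_h^d$ and let $Q_{\underline x}=\underline x+[0,h)^d$. On $Q_{\underline x}$ the linear interpolation $p_hu_h$ coincides with the affine function $v(x):=u_h(\underline x)+\sum_{j=1}^d D_{h,j}^+u_h(\underline x)(x_j-\underline x_j)$, which agrees with $u_h$ at the $d+1$ nodes $\underline x,\underline x+he_1,\dots,\underline x+he_d$. Consequently the discrete function $|u_h|^{p-1}u_h$ agrees at those nodes with $g:=|v|^{p-1}v=F(v)$, where $F(z):=|z|^{p-1}z$, so $p_h(|u_h|^{p-1}u_h)$ restricted to $Q_{\underline x}$ is exactly the affine interpolant $Lg$ of $g$ at those nodes. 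Hence the function whose $L^2$-norm we must bound equals, cell by cell, the interpolation error $g-Lg$ of the explicit piecewise-smooth function $g=F(v)$.

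First I would record the two elementary facts: (i) for $p>1$, $F$ is $C^1$ on $\mathbb{C}$ with $|F'(z)|\lesssim|z|^{p-1}$ (continuity at the origin is exactly where $p>1$ is used); and (ii) on $Q_{\underline x}$ the affine function $v=p_hu_h$ satisfies $|v|\lesssim\|u_h\|_{L_h^\infty(\mathbb{T}_h^d)}$ and has constant gradient $\nabla v=(D_{h,1}^+u_h(\underline x),\dots,D_{h,d}^+u_h(\underline x))$. By the chain rule $\nabla g=F'(v)\nabla v$, so $\|\nabla g\|_{L^\infty(Q_{\underline x})}\lesssim\|u_h\|_{L_h^\infty(\mathbb{T}_h^d)}^{p-1}\,|D_h^+u_h(\underline x)|$. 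Since $g-Lg$ vanishes at the vertex $\underline x$, and since the gradient of the affine interpolant is a vector of divided differences of $g$ — hence, by the one-variable mean value theorem applied to $s\mapsto g(\underline x+se_j)$, bounded by $\|\nabla g\|_{L^\infty(Q_{\underline x})}$ — the standard first-order interpolation estimate gives $\|g-Lg\|_{L^\infty(Q_{\underline x})}\lesssim h\|\nabla g\|_{L^\infty(Q_{\underline x})}\lesssim h\|u_h\|_{L_h^\infty(\mathbb{T}_h^d)}^{p-1}\,|D_h^+u_h(\underline x)|$.

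Summing the squared $L^2$-contributions over all cells, using $\|g-Lg\|_{L^2(Q_{\underline x})}^2\le h^d\|g-Lg\|_{L^\infty(Q_{\underline x})}^2$, yields
\[
\big\|p_h(|u_h|^{p-1}u_h)-|p_hu_h|^{p-1}p_hu_h\big\|_{L^2(\mathbb{T}^d)}^2
\lesssim h^2\|u_h\|_{L_h^\infty(\mathbb{T}_h^d)}^{2(p-1)}\,h^d\!\!\sum_{\underline x\in\mathbb{T}_h^d}|D_h^+u_h(\underline x)|^2
= h^2\|u_h\|_{L_h^\infty(\mathbb{T}_h^d)}^{2(p-1)}\|D_h^+u_h\|_{L_h^2}^2,
\]
and Lemma \ref{Lem:normequivalence} bounds $\|D_h^+u_h\|_{L_h^2}$ by $\|u_h\|_{H_h^1(\mathbb{T}_h^d)}$; taking square roots gives the assertion. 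The only delicate point is the regularity of $F$ at the origin in the range $1<p<2$, but $F$ remains $C^1$ there, which is all that the chain rule and the first-order interpolation estimate require; the rest is bookkeeping, and the argument is the adaptation to $\mathbb{T}_h^d$ of \cite[Proposition 5.7]{HY2}.
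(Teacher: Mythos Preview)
Your proposal is correct. The paper itself does not supply a proof of this lemma --- it simply records the statement with a citation to \cite[Proposition~5.7]{HY2} --- and your argument is precisely the cell-by-cell affine-interpolation error computation that underlies that reference, adapted verbatim to the periodic lattice. One cosmetic remark: since $g=F(v)$ is complex-valued, the ``one-variable mean value theorem'' you invoke for the divided differences of $g$ should be read as the fundamental theorem of calculus (giving the inequality $|g(\underline x+he_j)-g(\underline x)|\le h\|\partial_j g\|_{L^\infty}$ rather than an equality), but this does not affect the argument.
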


We end this section with the following lemma:
\begin{lemma}\label{lem:pre5}
Let $h\in(0,1]$ and $p > 1$. Then,
\[\||p_hu_h|^{p-1}p_hu_h - |u|^{p-1}u\|_{L^2} \lesssim \left(\|u_h\|_{L_h^{\infty}} + \|u\|_{L^{\infty}}\right)^{p-1}\|p_hu_h - u\|_{L^2}.\]
\end{lemma}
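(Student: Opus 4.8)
The plan is to prove the pointwise inequality
\[\big| |z|^{p-1}z - |w|^{p-1}w \big| \lesssim \big(|z| + |w|\big)^{p-1}|z-w|\]
for all $z,w\in\mathbb{C}$, and then integrate it against $L^2(\mathbb{T}^d)$ with $z = (p_hu_h)(x)$ and $w = u(x)$. This is the natural analogue of Lemma \ref{lem:pre4}, and indeed it is considerably simpler because no discretization or interpolation enters: once the pointwise bound is in hand, the statement is immediate from H\"older's inequality on $\mathbb{T}^d$, putting the two $L^\infty$ factors on $\|p_hu_h\|_{L^\infty}$ and $\|u\|_{L^\infty}$ and the remaining factor on $\|p_hu_h - u\|_{L^2}$. (Note $\|p_hu_h\|_{L^\infty(\mathbb{T}^d)} \leq \|u_h\|_{L_h^\infty(\mathbb{T}_h^d)}$ since the linear interpolant is a convex combination of the vertex values, which is why the right-hand side may be written in terms of $\|u_h\|_{L_h^\infty}$.)

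For the pointwise inequality itself I would use the fundamental theorem of calculus exactly as in \eqref{FTC}: writing $F(z) = |z|^{p-1}z$ viewed as a map $\mathbb{R}^2\to\mathbb{R}^2$, one has
\[F(z) - F(w) = \int_0^1 \frac{d}{ds}F\big(w + s(z-w)\big)\,ds = \int_0^1 DF\big(w+s(z-w)\big)\,ds\cdot(z-w),\]
and the derivative satisfies $|DF(\zeta)| \lesssim |\zeta|^{p-1}$ for $\zeta\neq 0$ (this is where $p>1$ is used so that $F$ is $C^1$ away from the origin, and the estimate extends to $\zeta = 0$ when $p>1$). Since $|w + s(z-w)| \leq |z| + |w|$ for $s\in[0,1]$, the integrand is bounded by $(|z|+|w|)^{p-1}$, and pulling this out of the integral gives the claim. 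One can equivalently cite the explicit expansion \eqref{FTC} and bound the two integrands there by $|w+s(z-w)|^{p-1} \leq (|z|+|w|)^{p-1}$ directly.

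The only mild subtlety — and the one place I would be slightly careful — is the behaviour of $F$ at the origin when $1 < p < 2$, where $F$ is merely Lipschitz-with-H\"older-derivative rather than smooth: the bound $|DF(\zeta)|\lesssim|\zeta|^{p-1}$ still holds, but one should note that the segment $\{w+s(z-w): s\in[0,1]\}$ may pass through $0$, on which $DF$ is (say) defined to be $0$; since $DF$ is locally integrable along the segment for all $p>1$, the fundamental theorem of calculus still applies and the estimate goes through. This is really the only obstacle, and it is routine; everything else is a direct application of H\"older's inequality and the elementary pointwise bound, so the proof is short.
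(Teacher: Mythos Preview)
Your approach matches the paper's one-line proof, which likewise applies the identity \eqref{FTC} (with $\alpha = p_hu_h$ and $\beta = u$) to obtain the pointwise bound and then takes $L^2$ norms. One small correction: for $d\geq 2$ the interpolant defined in \eqref{p_h} is \emph{not} a convex combination of vertex values---the coefficient $1-\sum_j(x_j-\underline{x}_j)/h$ can be negative---so one only has $\|p_hu_h\|_{L^\infty(\mathbb{T}^d)}\lesssim_d\|u_h\|_{L_h^\infty}$ rather than $\leq$, but this is of course still sufficient for the stated inequality.
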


\begin{proof}
It follows from the calculation \eqref{FTC} with $\alpha=p_hu_h$ and $\beta=u_h$.
\end{proof}

\subsection{Proof of continuum limit}
Now we are in a position to prove Theorem \ref{main theorem}. Because the proof closely follows from the argument presented in \cite[Section 6]{HY2}, we only sketch the outline.

Let $h\in(0,1]$ be fixed. Given initial data $u_0\in H^1(\T^2)$, let $u(t) \in C(\R;H^1(\T^2))$ be the global solution to NLS \eqref{NLS} (see Section \ref{Section B}). For the discretization $u_{h,0}=d_hu_{0}$, let $u_h(t)$ be the solution to DNLS \eqref{DNLS} with the initial data $u_{h,0}$ constructed in Section \ref{sec:GWP}.

Applying the linear interpolation operator to the Duhamel formula \eqref{eq:Duhamel}, we write 
\[p_h u_h(t) = p_h e^{it\Delta_h}u_{h,0}  - i\lambda \int_0^t p_h e^{i(t-s)\Delta_h}\left(|u_h|^{p-1}u_h \right)(s) \; ds.\]
Then, by direct calculations, the difference of $u$ and $p_h u$ can be expressed as 
\[\begin{aligned}
p_h u_h(t) - u(t) =&~{} p_h e^{it\Delta_h}u_{h,0} - e^{it\Delta}u_0 \\
& - i\lambda \int_0^t \left(p_h e^{i(t-s)\Delta_h} - e^{i(t-s)\Delta}p_h\right)\left(|u_h|^{p-1}u_h \right)(s) \; ds\\
& - i\lambda \int_0^t e^{i(t-s)\Delta}\left(p_h\left(|u_h|^{p-1}u_h\right) - \left(|p_hu_h|^{p-1}p_hu_h\right) \right)(s) \; ds\\
& - i\lambda \int_0^t e^{i(t-s)\Delta}\left(|p_hu_h|^{p-1}p_hu_h - |u|^{p-1}u \right)(s) \; ds\\
=:&~{} I_1 + I_2 + I_3 + I_4.
\end{aligned}\]
Lemma \ref{lem:pre3}, \ref{lem:pre4}, and \ref{lem:pre5} and Corollary \ref{cor:pre1} yield
$$\begin{aligned}
&\|p_h u_h(t) - u(t)\|_{L^2}\\
&\lesssim h^{\frac12} \bra{t}^2(1+\|u_0\|_{H^1})^p + \int_0^t (\|u_h(s)\|_{L_h^{\infty}}^{p-1} + \|u(s)\|_{L_x^{\infty}}^{p-1})\|p_hu_h (s)- u(s)\|_{H^1} \; ds,
\end{aligned}$$
which, by applying Gr\"onwall's inequality in addition to Proposition \ref{L^infty bound} and Corollary \ref{cor:L^infty}, implies
\[\|p_h u_h(t) - u(t)\|_{L^2} \lesssim h^{\frac12}(1+\|u_0\|_{H^1})^p e^{B|t|}\]
for sufficiently large $B \gg 1$. This completes the proof of Theorem \ref{main theorem}. 

\appendix

\section{Proof of Lemma \ref{Sobolev inequality} and \ref{Lem:GN} }\label{sec: appendix Sobolev}

On a periodic domain, the proof of the Sobolev inequality is more involved, compared to that on the entire Euclidean space, because the explicit kernel formula for the inverse Laplacian is no longer available (see \cite{BO} for example). However, if an arbitrarily small loss of regularity is allowed, one can show the inequality in a simpler manner, as is presented in this appendix.

The key item is Bernstein's inequality for the projection operator $P_N$ (see \eqref{LP projection}).

\begin{lemma}[Bernstein's inequality]\label{Bernstein inequality}
Suppose that $0<s\leq\frac{d}{2}$, $q\geq 2$ and $\frac{1}{q}=\frac{1}{2}-\frac{s}{d}$. For $h\in(0,1]$ and a dyadic number $N$ with $N_*:=2^{\lceil\log_2(\tfrac{h}{\pi})\rceil-1}\leq N\leq 1$, we have
\begin{equation}\label{eq: Bernstein inequality}
\|P_{N}u\|_{L_h^q}\lesssim \left(\frac{N}{h}\right)^s\|u\|_{L_h^2}.
\end{equation}
\end{lemma}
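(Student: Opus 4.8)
The plan is to prove Bernstein's inequality by interpolating between an $L_h^2$ estimate (which is trivial, since $P_N$ is a Fourier projection and hence bounded on $L_h^2$ uniformly in $h$ and $N$) and an $L_h^\infty$ estimate of the form $\|P_N u\|_{L_h^\infty} \lesssim (N/h)^{d/2}\|u\|_{L_h^2}$. The exponent relation $\frac1q = \frac12 - \frac sd$ is exactly what makes the Riesz--Thorin interpolation between these two endpoints produce the claimed power $(N/h)^s$: writing $\frac1q = \frac{1-\theta}{2} + \frac{\theta}{\infty}$ gives $\theta = \frac{2s}{d}$, and $\big((N/h)^{d/2}\big)^\theta = (N/h)^s$. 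So the whole content is the $L_h^2 \to L_h^\infty$ bound.

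For that endpoint, I would write $P_N u = K_N * u$ where $K_N$ is the kernel whose Fourier transform is the indicator of the annulus $\{\frac{\pi N}{2h} < \max_j|k_j| \le \frac{\pi N}{h}\}$ (or the full box $\{\max_j |k_j| \le \frac{\pi N}{h}\}$ when $N = N_*$, though that case is even easier). By the Cauchy--Schwarz inequality in the convolution, $\|P_N u\|_{L_h^\infty} = \sup_x |(K_N * u)(x)| \le \|K_N\|_{L_h^2}\|u\|_{L_h^2}$, using the definition \eqref{eq:convolution} of the lattice convolution and the discrete H\"older inequality \eqref{ineq:holder}. Then by Plancherel's theorem (Lemma \ref{prelim properties}(2)),
\[
\|K_N\|_{L_h^2}^2 = \frac{1}{(2\pi)^d}\sum_k |(\mathcal F_h K_N)(k)|^2 = \frac{1}{(2\pi)^d}\,\#\Big\{k \in (\mathbb T_h^d)^* : \tfrac{\pi N}{2h} < \max_j|k_j| \le \tfrac{\pi N}{h}\Big\}.
\]
The number of dual lattice points in that region is $O\big((N/h)^d\big)$ (the box $\{\max_j|k_j| \le \frac{\pi N}{h}\}$ contains $\le (\frac{2\pi N}{h}+1)^d \lesssim (N/h)^d$ integer points, using $N/h \ge N_*/h \gtrsim 1$ so that the ``$+1$'' is absorbed). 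Hence $\|K_N\|_{L_h^2} \lesssim (N/h)^{d/2}$, giving the $L_h^\infty$ endpoint.

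The last step is to apply the complex interpolation theorem to the linear operator $P_N$ between $(L_h^2, L_h^2)$ with norm $1$ and $(L_h^2, L_h^\infty)$ with norm $\lesssim (N/h)^{d/2}$; since $\mathbb T_h^d$ is a finite measure space with a finite number of points, there is no subtlety, and one obtains $\|P_N u\|_{L_h^q} \lesssim (N/h)^{d\theta/2}\|u\|_{L_h^2}$ with $\theta = 2s/d$, i.e.\ $\lesssim (N/h)^s\|u\|_{L_h^2}$, which is \eqref{eq: Bernstein inequality}. I do not anticipate a genuine obstacle here; the only point requiring a little care is confirming that all constants are uniform in $h \in (0,1]$ and in the dyadic parameter $N$, which comes down to the lattice-point count being comparable to $(N/h)^d$ without lower-order corrections — and that is guaranteed precisely because $N \ge N_* \sim h/\pi$, so $N/h$ is bounded below.
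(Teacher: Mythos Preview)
Your argument is correct and slightly more direct than the paper's. The paper also interpolates, but its endpoint is $L_h^1\to L_h^\infty$: it bounds $\|P_N u\|_{L_h^\infty}\lesssim (N/h)^d\|\mathcal F_h u\|_{L_k^\infty}\lesssim (N/h)^d\|u\|_{L_h^1}$ by simply counting the $\sim (N/h)^d$ frequencies in the Fourier sum, interpolates with the trivial $L_h^2\to L_h^2$ bound to obtain $\|P_N u\|_{L_h^q}\lesssim (N/h)^{2s}\|u\|_{L_h^{q'}}$, and then closes with a $TT^*$/duality step (using $P_N^2=P_N$ and self-adjointness) to convert this into the desired $L_h^2\to L_h^q$ estimate. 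Your route instead establishes the $L_h^2\to L_h^\infty$ endpoint $\|P_N u\|_{L_h^\infty}\lesssim (N/h)^{d/2}\|u\|_{L_h^2}$ directly via Cauchy--Schwarz and Plancherel on the kernel, which lets you interpolate straight to $L_h^2\to L_h^q$ without the extra duality step. Both arguments ultimately rest on the same lattice-point count, and both are uniform in $h$ and $N$ for the same reason ($N/h\gtrsim 1$); yours just short-circuits the $TT^*$ detour.
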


\begin{proof}
We prove the lemma by the standard $TT^*$ argument. When $q=\infty$, we have
$$\begin{aligned}
\|P_{N}u\|_{L_h^\infty}&=\left\|\frac{1}{(2\pi)^d}\sum_{\frac{N\pi}{2h}<\max|k_j|\leq\frac{N\pi}{h}}(\mathcal{F}_hu)(k)e^{ik\cdot x}\right\|_{L_h^\infty}\\
&\lesssim \left(\frac{N}{h}\right)^{d}\|\mathcal{F}_hu\|_{L_k^\infty}\lesssim\left(\frac{N}{h}\right)^{d}\|u\|_{L_h^1}.
\end{aligned}$$
When $q=2$, it is obvious that $\|P_{N}u\|_{L_h^2}\leq \|u\|_{L_h^2}$. Interpolating, we obtain
$$\|P_{N}u\|_{L_h^q}\lesssim \left(\frac{N}{h}\right)^{2s}\|u\|_{L_h^{q'}}$$
for $q\geq 2$. This inequality implies that
$$\begin{aligned}
\|P_{N}u\|_{L_h^2}^2&=h^d\sum_{x}P_Nu(x)\overline{P_Nu(x)}=h^d\sum_{x}P_Nu(x)\overline{u(x)}\\
&\leq \|P_Nu\|_{L_h^q}\|u\|_{L_h^{q'}}\lesssim \left(\frac{N}{h}\right)^{2s}\|u\|_{L_h^{q'}}^2.
\end{aligned}$$
Thus, \eqref{eq: Bernstein inequality} follows from the duality.
\end{proof}

\begin{proof}[Proof of Lemma \ref{Sobolev inequality}]
By the triangle inequality and Lemma \ref{Bernstein inequality}, we prove that
$$\begin{aligned}
\|u\|_{L_h^q}&\leq\sum_{N=N_*}^1\|P_Nu\|_{L_h^q}\lesssim\sum_{N=N_*}^1\left(\frac{N}{h}\right)^s\|P_Nu\|_{L_h^2}\\
&\lesssim  \sum_{N=N_*}^1\left(\frac{N}{h}\right)^{-\epsilon}\|u\|_{H_h^{s+\epsilon}}\sim \left(\frac{N_*}{h}\right)^{-\epsilon}\|u\|_{H_h^{s+\epsilon}}\sim \|u\|_{H_h^{s+\epsilon}},
\end{aligned}$$
where in the last step, we used that $N_*\sim 2^{\log_2(\frac{h}{\pi})}=\frac{h}{\pi}$.
\end{proof}

Similarly, the Gagliardo--Nirenberg inequality can be proved.

\begin{proof}[Proof of Lemma~\ref{Lem:GN}]
Replacing $f$ by $\frac{1}{\|f\|_{L_h^2}}f$, we may assume that $\|f\|_{L_h^2}=1$. Suppose that $\|f\|_{H_h^{1}}\leq h^{-1}$. Let $R=h\|f\|_{H_h^{1}}$. Then, using Bernstein's inequality, we prove that
	\begin{align*}
	\|f\|_{L_h^q}
	&\leq \sum_{N_*\le N\le 1}\|P_Nf\|_{L_h^q}
	\leq \sum_{N_*\le N\le 1} \left(\frac{N}{h}\right)^\theta \|P_Nf\|_{L_h^2}\\
	&\leq
	 \sum_{N_*\leq N\leq R}(\frac{N}{h})^\theta \|P_Nf\|_{L_h^2}
	 +\sum_{R<N\leq 1}\left(\frac{N}{h}\right)^{\theta-1}\|P_N( (-\Delta_h)^\frac12f)\|_{L_h^2}\\
	 &\ls \left(\frac{R}{h}\right)^{\theta}+\left(\frac{R}{h}\right)^{\theta-1}\|f\|_{H_h^{1}} \sim\|f\|_{H_h^{1}}^{\theta}.
	\end{align*}
Similarly, if $\|f\|_{H_h^{1}}\geq h^{-1}$, then 
	$$\|f\|_{L_h^q}\leq \sum_{N_*\leq N\leq 1}\|P_Nf\|_{L_h^q}\lesssim\sum_{N_*\leq N\leq 1} \left(\frac{N}{h}\right)^{\theta}\|f\|_{L_h^2}\sim h^{-\theta}\le \|f\|_{H_h^{1}}^\theta.$$
\end{proof}

\section{Well-posedness results for NLS on the $\mathbb T^2$}\label{Section B}

We consider the (periodic) NLS \eqref{NLS}
\begin{equation}\label{eq:NLS}
\begin{aligned}
i\partial_t u+\Delta u-\lambda |u|^{p-1}u&=0,\\
u(0) &= u_0 \in H^s(\T^d).
\end{aligned}
\end{equation}
Duhamel's principle yields that \eqref{eq:NLS} is equivalent to the following integral equation on $[-T,T]$
\begin{equation}\label{eq:DUHAMEL_NLS}
u(t)=\eta_T(t)e^{-it(-\Delta)}u_0-i\lambda\eta_T(t)\int_0^t e^{-i(t-s)(-\Delta)}(|\eta_{2T}(t)u|^{p-1}\eta_{2T}(t)u)(s)ds,
\end{equation}
where $\eta$ is a smooth (even) bump function satisfying $\eta \equiv 1$ in $[-1,1]$ and $\eta \equiv 0$ in $(-2,2)^c$, and $\eta_T(t) = \eta(t/T)$. Note that one may replace $\eta_T(t)$ by $\eta(t)$ in \eqref{eq:DUHAMEL_NLS} (with a smallness assumption) when $p < 1+ \frac4d$ (in the $2D$ case, $p < 3$), owing to the scaling argument.

\medskip
 
For the classical well-posedness result of Bourgain \cite{B-93} (see also \cite{B-99}), we introduce the following function space. For $s,b\in \mathbb{R}$, we define the norm 
\[\norm{f}_{X^{s,b}}^2 = \int_{\R} \sum_{k \in \Z^d} \bra{k}^{2s}\bra{\tau + |k|^2}^{2b}|\widetilde{f}(\tau,k)|^2 d\tau\]
for $f \in \mathcal S (\R \times \T^d) $, where $\bra{\cdot} = (1+|\cdot|^2)^{1/2}$ and $\tilde{f}$ is the spacetime Fourier transform of $f$ given by
\[\widetilde{f}(\tau , k)=\int_{\mathbb{R}}\int _{\T^d} f(t,x) e^{-ix \cdot k}e^{-it\tau} dxdt.\]
Then, the $X^{s,b}$ space is defined as the completion of $\mathcal S'(\R \times \T^d)$ under the norm $\|\cdot\|_{X^{s,b}}$. This function space is termed the Bourgain space or the dispersive Sobolev space. 

\begin{theorem}[GWP for 2D NLS \cite{B-93}]\label{WP:NLS}
Suppose that $d=2$, and $p$ is given by \eqref{assumption 1}. Then, NLS \eqref{eq:NLS} is globally well-posed in $H^1(\T^2)$. Moreover, the solution $u$ obeys
\begin{equation}\label{eq:X}
\|u\|_{X^{1,\frac12}} \lesssim \|u_0\|_{H^1}.
\end{equation}
As a consequence, we have 
\begin{equation}\label{eq:Lp}
\|u\|_{L_{t,x}^q(\R \times \T^2)} \lesssim \|u\|_{X^{s(q,\epsilon),b(q,\epsilon)}}
\end{equation}
for $q\geq 4$, where $0 < \epsilon \ll 1$, $s(q,\epsilon) = \frac{4\epsilon}{q} + (1 + \frac{1-2\epsilon}{q-4})(1-\frac4q)$ and $b(q,\epsilon) = (\frac12 - \frac{\epsilon}{8})\frac4q + (\frac12 + \frac{\epsilon}{4(q-4)})(1-\frac4q)$. In particular,
\begin{equation}\label{eq:L4}
\|u\|_{L_{t,x}^4(\R \times \T^2)} \lesssim \|u\|_{X^{\epsilon,\frac12-\frac{\epsilon}{8}}}.
\end{equation}
\end{theorem}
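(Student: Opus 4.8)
The final statement to prove is Theorem \ref{WP:NLS} (GWP for the 2D periodic NLS in $H^1$, with the $X^{s,b}$-bound \eqref{eq:X} and the resulting space--time estimates \eqref{eq:Lp}--\eqref{eq:L4}). The plan is to follow Bourgain's classical scheme \cite{B-93,B-99}: establish $L^4_{t,x}(\mathbb{R}\times\mathbb{T}^2)$ Strichartz estimates for the linear group $e^{it\Delta}$ via the periodic $L^4$ estimate, upgrade these to bilinear/multilinear estimates in the Bourgain spaces $X^{s,b}$, then run a contraction-mapping argument in $X^{s,b}$ on the truncated Duhamel formula \eqref{eq:DUHAMEL_NLS} to get local well-posedness in $H^1$, and finally globalize using conservation of mass and energy together with the Gagliardo--Nirenberg inequality (exactly as in the discrete setting, Proposition \ref{L^infty bound}).

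First I would recall the linear estimate $\|e^{it\Delta}\phi\|_{L^4_{t,x}([0,1]\times\mathbb{T}^2)}\lesssim \|\phi\|_{H^\epsilon}$ for any $\epsilon>0$, which is Bourgain's divisor/number-theoretic bound on $\mathbb{T}^2$; by the standard transference principle this gives $\|u\|_{L^4_{t,x}}\lesssim \|u\|_{X^{\epsilon,1/2+}}$, and a $TT^*$/interpolation argument combined with time localization yields \eqref{eq:L4} with the sharp $b=\tfrac12-\tfrac{\epsilon}{8}$; the higher exponents in \eqref{eq:Lp} for $q\geq 4$ follow by interpolating \eqref{eq:L4} against the trivial $L^\infty_t H^{1+}_x \hookrightarrow L^\infty_{t,x}$ embedding (Sobolev on $\mathbb{T}^2$) and keeping track of the resulting $s(q,\epsilon),b(q,\epsilon)$. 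Next, using $\|u\|_{X^{1,1/2}}$ control and the algebra-type bound coming from $L^4\cdot L^4 \hookrightarrow L^2$, one proves the multilinear estimate $\big\| |u|^{p-1}u \big\|_{X^{1,-1/2+}} \lesssim \|u\|_{X^{1,1/2-}}^{p}$ for $p$ in the range \eqref{assumption 1}; for non-integer $p$ this is where the fractional chain rule / Leibniz rule in Bourgain spaces enters, and for the defocusing case one also exploits that on a compact interval only finitely many frequency interactions are genuinely dangerous. With these nonlinear estimates and the standard $X^{s,b}$ machinery ($b>\tfrac12$, the $\eta_T$ cutoff gaining a small power of $T$), the map defined by \eqref{eq:DUHAMEL_NLS} is a contraction on a ball in $X^{1,1/2}_T$, giving local well-posedness in $H^1(\mathbb{T}^2)$ and the a priori bound \eqref{eq:X} on the local interval.

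Finally, to globalize: mass $\|u(t)\|_{L^2}^2$ and energy $E(u(t)) = \tfrac12\|\nabla u(t)\|_{L^2}^2 + \tfrac{\lambda}{p+1}\|u(t)\|_{L^{p+1}}^{p+1}$ are conserved, and in the defocusing case ($\lambda=1$) the energy immediately bounds $\|\nabla u(t)\|_{L^2}$; in the focusing case ($\lambda=-1$) the restriction $1<p<3$ means $p-1<2$, so the Gagliardo--Nirenberg inequality $\|u\|_{L^{p+1}}^{p+1}\lesssim \|u\|_{L^2}^{\,2}\,\|\nabla u\|_{L^2}^{\,p-1}$ together with Young's inequality absorbs the potential term and yields a uniform-in-time bound on $\|u(t)\|_{H^1}$ in terms of $M(u_0)$ and $E(u_0)$ — this is the same mechanism as in the proof of Proposition \ref{L^infty bound}. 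Since the local existence time depends only on $\|u_0\|_{H^1}$, iterating the local theory on successive intervals of fixed length covers $\mathbb{R}$, and \eqref{eq:X} propagates (after summing over the finitely many subintervals needed for any fixed time horizon, then taking the space--time norm). The main obstacle I expect is the nonlinear estimate for non-integer $p$: controlling $\||u|^{p-1}u\|_{X^{1,-1/2+}}$ requires a careful fractional Leibniz argument in the Bourgain spaces rather than a naive Hölder splitting, and this is the step where the precise restriction \eqref{assumption 1} is used — the rest (transference, $TT^*$, Gronwall-type globalization) is routine once that input is in hand.
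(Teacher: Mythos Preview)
Your proposal is correct and follows Bourgain's classical scheme, which is exactly what the paper invokes: the theorem is stated as a citation of \cite{B-93}, and the paper's own treatment consists only of the surrounding remarks---that \eqref{eq:Lp} follows by interpolating \eqref{eq:L4} against the Sobolev embedding $X^{1+,\frac12+}\hookrightarrow L^\infty_{t,x}$, and that the a priori bound \eqref{eq:X} comes from the standard $X^{s,b}$ iteration with this $L^q$ input. Your sketch matches these remarks and supplies the details the paper omits.

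One minor correction worth flagging: you write that the restriction \eqref{assumption 1} is used in the multilinear estimate step, but in fact the constraint $p<3$ in the focusing case enters only in the globalization argument (via Gagliardo--Nirenberg and Young's inequality, as you correctly describe earlier). The local $H^1$ theory on $\mathbb{T}^2$ works for all $p>1$, which is consistent with the defocusing branch of \eqref{assumption 1} carrying no upper bound on $p$.
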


\begin{remark}
$(i)$ One can immediately check $s(q,\epsilon) < 1 - \frac2q$ and $b(q,\epsilon) < \frac12$. \\
$(ii)$ In the one-dimensional case, Bourgain \cite{B-93} proved the $L_{t,x}^4$ estimate
\begin{equation}\label{eq:L4-1}
\|u\|_{L_{t,x}^4(\T \times \T)} \lesssim \|u\|_{X^{0,\frac38}}.
\end{equation}
This is an improvement of the $L^4$ estimate for free solutions by Zygmund \cite{Zygmund74}, namely,
\[\|e^{it\partial_x}u_0\|_{L_{t,x}^4(\T \times \T)} \lesssim \|u_0\|_{L^2},\]
which implies by the transference principle that
\[\|u\|_{L_{t,x}^4(\R \times \T)} \lesssim \|u\|_{X^{0,b}}, \quad b > \frac12.\]
$(iii)$ Bourgain employed a time-periodic function to show \eqref{eq:L4-1}; however, such a restriction is not necessary (such as \eqref{eq:L4}), see, for instance, \cite{Tao2001, Tzvetkov2006}.
\end{remark}

\begin{remark}
The $L^q$ estimate \eqref{eq:Lp} follows from the interpolation between \eqref{eq:L4} and $\|u\|_{L_{t,x}^{\infty}(\R \times \T^2)} \lesssim \|u\|_{X^{1+,\frac12+}}$.
Together with the H\"older inequality and the $L^4$ estimate \eqref{eq:L4}, one has the (local-in-time) $L^q$ estimate for $1 \le q <4$, precisely,
\[\|u\|_{L_{t,x}^q([0,1] \times \mathbb{T}^2)} \le \|u\|_{L_{t,x}^4(\R \times \T^2)} \lesssim \|u\|_{X^{\epsilon,\frac12-\frac{\epsilon}{8}}}\]  
\end{remark}

\begin{remark}
The \emph{a priori} bound \eqref{eq:X} can be obtained by the standard iteration method in addition to the $L^q$ estimate \eqref{eq:Lp}.
\end{remark}


As a corollary, we obtain a time-averaged bound.
\begin{corollary}[Time-averaged $L^{\infty}$ bound for 2D NLS]\label{cor:L^infty}
Suppose that $d=2$, and $p$ is given by \eqref{assumption 1}. Suppose that $u(t)$ is the global solution to periodic NLS \eqref{eq:NLS} with initial data $u_0$, constructed in Theorem \ref{WP:NLS}. Then,
\[\|u\|_{L^{q_*}([-T,T]; L^{\infty}(\T^2))} \lesssim \la T \ra^{\frac{1}{q_*}},\]
where $q_* > \max(p-1,2)$.
\end{corollary}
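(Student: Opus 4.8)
The plan is to derive the time-averaged $L^\infty$ bound directly from the spacetime estimate \eqref{eq:Lp} of Theorem \ref{WP:NLS}, combined with the a priori bound \eqref{eq:X}. The point is that $\|u\|_{L^{q_*}([-T,T];L^\infty(\T^2))}$ is, by Minkowski's (or H\"older's) inequality in the spatial variable, controlled by a mixed spacetime norm with exponent slightly larger than $q_*$ in both variables, which in turn is captured by a Bourgain space at regularity below one, hence by $\|u\|_{X^{1,1/2}}\lesssim\|u_0\|_{H^1}$.

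First I would fix $q_* > \max(p-1,2)$ and choose $q$ with $q_* < q < \infty$; since $q_* > 2$ we may in fact take $q \ge 4$, so that the estimate \eqref{eq:Lp} applies. Next, I would use the Sobolev-type embedding $L^\infty(\T^2) \hookleftarrow$ a mixed norm: more precisely, writing $\|u(t)\|_{L^\infty_x} \lesssim \|u(t)\|_{L^q_x}^{1-\theta}\|u(t)\|_{H^1_x}^{\theta}$ for suitable $\theta\in(0,1)$ by Gagliardo--Nirenberg is one route; a cleaner route is to note that, by H\"older in time on the bounded interval $[-T,T]$,
\[
\|u\|_{L^{q_*}_t([-T,T];L^\infty_x)} \le (2T)^{\frac1{q_*}-\frac1q}\|u\|_{L^{q}_t([-T,T];L^\infty_x)},
\]
and then bound $L^q_t L^\infty_x$. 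For the latter, interpolating the $L^{q}_{t,x}$ estimate \eqref{eq:Lp} with the endpoint $\|u\|_{L^\infty_{t,x}}\lesssim \|u\|_{X^{1+,1/2+}}$ gives, for any finite $q$, a bound $\|u\|_{L^{\tilde q}_t L^\infty_x(\R\times\T^2)} \lesssim \|u\|_{X^{s,b}}$ for some $s<1$, $b<\tfrac12$ when $\tilde q$ is large enough; by Remark $(i)$ after Theorem \ref{WP:NLS} one has $s(q,\epsilon)<1-\tfrac2q$ and $b(q,\epsilon)<\tfrac12$, so the required Bourgain-space exponents are strictly below the level $(1,\tfrac12)$, and \eqref{eq:X} closes the estimate. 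The global-in-time $X^{1,1/2}$ bound then yields the $\la T\ra^{1/q_*}$ growth simply from the H\"older factor $(2T)^{1/q_* - 1/q}$, after absorbing the $h$-independent constants; letting $q\to\infty$ makes this factor essentially $\la T\ra^{1/q_*}$.

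The main obstacle is a bookkeeping one rather than a conceptual one: one must verify that the pair of exponents $(s(q,\epsilon), b(q,\epsilon))$ appearing in \eqref{eq:Lp}, after interpolation with the $L^\infty_{t,x}$ endpoint, stays strictly below $(1,\tfrac12)$ for a range of $q$ large enough to cover every $q_* > \max(p-1,2)$ — this is exactly what Remark $(i)$ guarantees, so the key is to invoke it correctly and to track that $b<\tfrac12$ is what allows the use of the global bound \eqref{eq:X} (a $b=\tfrac12$ estimate would only be local-in-time). A secondary point is to handle the transition from the global spacetime norm on $\R\times\T^2$ to the norm on $[-T,T]\times\T^2$ and extract the precise power of $\la T\ra$; this is routine since restricting a function to a subinterval does not increase its $X^{s,b}$ or $L^{q}_{t,x}$ norm, and the H\"older-in-time step supplies the polynomial factor. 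I do not expect the case distinction between focusing and defocusing nonlinearities to matter here, since the required input — global existence in $H^1$ with the uniform bound \eqref{eq:X} — is already packaged in Theorem \ref{WP:NLS} under assumption \eqref{assumption 1}.
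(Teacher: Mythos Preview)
Your proposal has a genuine gap: you treat the bound \eqref{eq:X}, $\|u\|_{X^{1,1/2}}\lesssim\|u_0\|_{H^1}$, as a global-in-time estimate, but on the torus it is necessarily local. Solutions on $\T^2$ do not decay; already for a free wave one computes $\|\eta_T e^{it\Delta}u_0\|_{X^{s,1/2}}^2\sim T\,\|u_0\|_{H^s}^2$, so no $T$-independent global $X^{1,1/2}$ control exists. The bound \eqref{eq:X} is the output of the contraction argument on the single short interval appearing in \eqref{eq:DUHAMEL_NLS}; global well-posedness comes from iterating it via conservation laws, not from a global Bourgain-space norm. Your remark that ``$b<\tfrac12$ is what allows the use of the global bound \eqref{eq:X}'' reflects this confusion --- lowering $b$ on the Strichartz side only gives the embedding $X^{1,1/2}\hookrightarrow X^{s,b}$, it does not extend \eqref{eq:X} to all of $\R$. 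Hence the H\"older step producing $(2T)^{1/q_*-1/q}$, followed by ``letting $q\to\infty$'', rests on an input that is unavailable. There is a second, smaller slip: interpolating the diagonal $L^q_{t,x}$ estimate with the $L^\infty_{t,x}$ endpoint yields only diagonal spaces $L^{q_\theta}_{t,x}$, not the mixed norm $L^{\tilde q}_tL^\infty_x$ you claim.

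The paper instead transplants the proof of Proposition~\ref{L^infty bound} to the continuum: on a short interval $I$ whose length depends only on $\sup_t\|u(t)\|_{H^1}$, a bootstrap via Duhamel and a linear $L^{q_*}_tL^\infty_x$ estimate (the continuum analogue of Corollary~\ref{linear L^infty bound}) gives $\|u\|_{L^{q_*}_t(I;L^\infty_x)}\lesssim 1$; mass and energy conservation make that length uniform in time; covering $[-T,T]$ by $O(T)$ such intervals and summing the $q_*$-th powers produces the factor $\langle T\rangle^{1/q_*}$. Your $X^{s,b}$ machinery could serve as the local ingredient on each subinterval, but the iteration over intervals is not optional --- it is precisely what generates the stated growth, and it is what your sketch omits.
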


\begin{proof}
The proof follows from an analogous argument in the proof of Proposition \ref{L^infty bound}.
\end{proof}

%

\end{document}